\newtheorem{prop}{Proposition}[section]
\newtheorem{lem}[prop]{Lemma}
\numberwithin{equation}{section}
\newcommand{\beq}{\begin{eqnarray}}
\newcommand{\beqq}{\begin{eqnarray*}}
\newcommand{\eeq}{\end{eqnarray}}
\newcommand{\eeqq}{\end{eqnarray*}}
\newtheorem{theorem}{Theorem}[section]
\newtheorem{lemma}{Lemma}[section]
\newtheorem{corollary}[theorem]{Corollary}
\definecolor{link-color}{rgb}{0.15,0.4,0.15}
\newcommand{\R}{\mathbb{R}}
\newcommand{\N}{\mathbb{N}}
\renewcommand{\P}{\mathbb{P}}
\newcommand{\E}{\mathbb{E}}
    \def\d{{\textnormal d}}
\newenvironment{eqnarr}{\begin{IEEEeqnarray}{rCl}}{\end{IEEEeqnarray}\ignorespacesafterend}
\renewcommand{\eqref}[1]{\hyperref[#1]{(\ref*{#1})}}
\newcommand*{\norm}[1]{\lVert #1 \rVert}
    \def\beq{\begin{eqnarr}}
    \def\eeq{\end{eqnarr}}
    \def\beqq{\begin{eqnarray*}} 
    \def\eeqq{\end{eqnarray*}} 
        \def\d{{\rm d}}
    \def\d{{\textnormal d}}
\newtheorem{remark}{Remark}[section]
\renewcommand{\ln}{\log}
\newcommand*{\pref}[1]{\hyperref[#1]{(\ref*{#1})}}
\newcommand*{\refpref}[2]{\hyperref[#2]{\ref*{#1}(\ref*{#2})}}
\renewcommand{\ln}{\log}
\newcommand{\pdag}{\mathbf{P}^{\dagger}}
\newcommand{\edag}{\mathbf{E}^{\dagger}}
\def\namedlabel#1#2{\begingroup
    #2%
    \def\@currentlabel{#2}%
    \phantomsection\label{#1}\endgroup
}
  \newcommand{\D}{{\rm d}}
\newcommand{\U}{\texttt U}
\newcommand{\bS}{\texttt S}
\newcommand{\ebS}{\emph{\texttt S}}
\newcommand{\bF}{\texttt F}
\newcommand{\ebF}{\emph{\texttt F}}
\newcommand{\bL}{\texttt L}
\numberwithin{equation}{section}
\theoremstyle{plain}
\begin{document}

\begin{frontmatter}
\title{Stochastic Methods for the Neutron  Transport Equation I: Linear Semigroup asymptotics}

\runtitle{Recurrent extension for ssMp in a Wedge}

\begin{aug}
\author{\fnms{Emma Horton}\thanksref{t2}\ead[label=e5]{emma.horton94@gmail.com}},
 \author{\fnms{Andreas E. Kyprianou}\thanksref{t3}\ead[label=e3]{a.kyprianou@bath.ac.uk}}
\and \author{\fnms{Denis Villemonais}\ead[label=e6]{denisvillemonais@gmail.com}}

\thankstext{t2}{This work was done whilst in receipt of a PhD scholarship  from industrial partner Wood (formerly Amec Foster Wheeler).}

\thankstext{t3}{Supported by EPSRC grant EP/P009220/1}



\address{
E. L. Horton,\\
Institut \'Elie Cartan de Lorraine \\
Universit\'e de Lorraine\\
54506 Vandoeuvre-l\`es-Nancy Cedex, \\
France.\\
\printead{e5}
}

\address{
A.E. Kyprianou,  \\
University of Bath\\
Department of Mathematical Sciences \\
Bath, BA2 7AY\\
 UK.\\
\printead{e3}
}


\address{D. Villemonais\\
Institut \'Elie Cartan de Lorraine\\
Universit\'e de Lorraine \\
54506 
Vandoeuvre-l\`es-Nancy Cedex\\
France.\\
\printead{e6}
}
\end{aug}

\begin{abstract}\hspace{0.1cm}
The Neutron Transport Equation (NTE) describes the flux of neutrons through an inhomogeneous fissile medium. 
In this paper, we reconnect the  NTE to the physical model of the spatial Markov branching process which describes the process of nuclear fission, transport, scattering, and absorption. By reformulating the NTE in its mild form and identifying its solution as an expectation semigroup, we use modern techniques to develop a Perron-Fr\"obenius (PF) type decomposition, showing that growth is dominated by a leading eigenfunction and its associated left and right eigenfunctions. In the spirit of results for spatial branching and fragmentation processes, we use our PF decomposition to show the existence of an intrinsic martingale and associated  spine decomposition. Moreover, we show how  criticality in the PF decomposition dictates the convergence of the intrinsic martingale. The mathematical difficulties in this context come about through unusual piecewise linear motion of particles coupled with an infinite type-space which is taken as neutron velocity. The fundamental nature of our PF decomposition also plays out in  accompanying work \cite{SNTE-II, MCNTE}.
\end{abstract}

\begin{keyword}[class=MSC]
\kwd[Primary ]{82D75, 60J80, 60J75}
\kwd{}
\kwd[; secondary ]{60J99}
\end{keyword}

\begin{keyword}
\kwd{Neutron Transport Equation, branching Markov process, principal eigenvalue, semigroup theory, Perron-Frobenius decomposition}
\end{keyword}

\end{frontmatter}

\setcounter{tocdepth}{1}

\section{Introduction}\label{intro}The neutron transport equation (NTE) describes the flux of neutrons across a planar cross-section  in an inhomogeneous fissile medium (measured is number of neutrons per cm$^2$ per second). Neutron flux is described as a function of time, $t$, Euclidian location, $r$, direction, $\Omega$ and neutron energy $E$. It is not uncommon in the physics literature to assume that velocity is a function of both direction and energy, thereby reducing the number of variables by one. This allows us to describe the dependency of flux more simply in terms of time and, what we call, the {\it configuration variables} $ (r, v) \in  D \times V$ where $D\subseteq\mathbb{R}^3$ is a non-empty, open, smooth, bounded and convex domain such that $\partial D$ has zero Lebesgue measure, and 
$V$ is the velocity space, which we take to be the three dimensional annulus  $V = \{\upsilon\in \mathbb{R}^3:{\texttt v}_{\texttt{min}}\leq  |\upsilon|\leq {\texttt v}_{\texttt{max}}\}
$, where $0<{\texttt v}_{\texttt{min}}<{\texttt v}_{\texttt{max}}<\infty$. 

\smallskip
 As a {\it backward equation}, the NTE is written in the form 
\begin{align}
\frac{\partial}{\partial t}\psi_t(r, \upsilon) &=\upsilon\cdot\nabla\psi_t(r, \upsilon)  -\sigma(r, \upsilon)\psi_t(r, \upsilon)\notag\\
&+ \sigma_{\texttt{s}}(r, \upsilon)\int_{V}\psi_t(r, \upsilon') \pi_{\texttt{s}}(r, \upsilon, \upsilon')\d\upsilon' + \sigma_{\texttt{f}}(r, \upsilon) \int_{V}\psi_t(r, \upsilon') \pi_{\texttt{f}}(r, \upsilon, \upsilon')\d\upsilon',
\label{bNTE}
\end{align}
where the different components (or {\it cross-sections} as they are known in the nuclear physics literature) have the following interpretation:
\begin{align*}
\sigma_{\texttt{s}}(r, \upsilon) &: \text{ the rate at which scattering occurs from incoming velocity $\upsilon$,}\\
\sigma_{\texttt{f}}(r, \upsilon) &: \text{  the rate at which fission occurs from incoming velocity $\upsilon$,}\\
\sigma(r, \upsilon) &: \text{ the sum of the rates } \sigma_{\texttt{f}}+ \sigma_{\texttt{s}} \text{ and is known as the total cross section,}\\
\pi_{\texttt{s}}(r, \upsilon, \upsilon')\d\upsilon' &: \text{  the scattering yield at velocity $\upsilon'$ from incoming velocity }  \upsilon, \\
 &\hspace{0.5cm}\text{ satisfying }\textstyle{\int_V}\pi_{\texttt{s}}(r, \upsilon, \upsilon'){\rm d}\upsilon'=1,\text{ and }\\
 \pi_{\texttt{f}}(r, \upsilon, \upsilon')\d\upsilon' &:  \text{  the neutron yield at velocity $\upsilon'$ from fission with incoming velocity }   \upsilon,\\
 &\hspace{0.5cm}\text{ satisfying }{\color{black} \textstyle{\int_V}\pi_{\texttt{f}}(r, \upsilon, \upsilon')\d\upsilon' <\infty.}
 \end{align*}
Some or all of the three  assumptions below will be used from time to time in our main results.
\smallskip
{\bf 
{\color{black} 

(H1): Cross-sections $\sigma_{\texttt{s}}$, $\sigma_{\texttt{f}}$, $\pi_{\texttt{s}}$ and $\pi_{\texttt{f}} $ are uniformly bounded away from   infinity.
\smallskip

 (H2): 
We have 
$
\sigma_{\texttt{s}} \pi_{\texttt{s}}  + 
\sigma_{\texttt{f}} \pi_{\texttt{f}}>0$ on $D\times V\times V$.
\smallskip

(H3): There is an open ball $B$ compactly embedded in $D$ such that $\sigma_{\texttt{f}}\pi_{\texttt{f}} >0$ on $B\times V\times V$.
}
}
\smallskip

It is also usual to insist on the physical boundary conditions
\begin{equation}
\left\{
\begin{array}{ll}
\psi_0(r, \upsilon) = g(r, \upsilon) &\text{ for }r\in D, \upsilon\in{V},
\\
&
\\
\psi_t(r, \upsilon) = 0& \text{ for } t \ge 0 \text{ and } r\in \partial D
\text{ if }\upsilon
\cdot{\bf n}_r>0,
\end{array}
\right.
\label{BC}
\end{equation}

where ${\bf n}_r$ is the outward unit normal at $r \in \partial D$ 
and $g: D \times V \to [0, \infty)$ is a bounded, measurable function on which we will later impose further conditions. Physically, this boundary conditions mean that any neutron starting on the boundary of the reactor with velocity pointing outwards will be `killed'.
\smallskip

Formally speaking, \eqref{bNTE} as stated is ill defined (because of regularity issues associated to the transport operator $\upsilon\cdot\nabla$) and has traditionally otherwise appeared in applied mathematics and physics literature in the form of an {\it abstract Cauchy problem}   on  $L_2(D\times V)$, the space of square integrable functions on $D\times V$. This has formed the principle historical outlook of the analysis of the NTE, appealing to $c_0$-semigroup theory; see for example the classical works of \cite{DS, Leh, LW1, LW2, J, Bell, PR69, D, DL6, M-K, Hbook, LPS, PP}. 
\smallskip 

The connection of the NTE via semigroup theory to an underlying stochastic process has, in contrast,  received a very limited amount  of attention; cf \cite{D, LPS, MT}. Accordingly the stochastic analysis of \eqref{bNTE} has seen very little development in light of recent innovations in the relevant theory of stochastic processes.

\smallskip 

In the current article, we are more interested in exploring how NTE can be interpreted as a {\it mild equation}, describing the mean semigroup evolution of the stochastic process that models the underlying physical process of neutron fission, transport, scatter and absorption. More precisely, we have two main contributions: (i) to develop a new precise statement of the form $\psi_t\sim {\rm e}^{\lambda_*t} c_g\varphi + o({\rm e}^{\lambda_*t})$, where $\lambda_*$ and $\varphi$ are  a leading eigenvalue and eigenfunction associated to the NTE and $c_g$ is a constant that depends on the initial data $g$; (ii) to make the first step in understanding how the growth of the solution to the NTE relative to its lead eigenfunction plays out in terms of the aforementioned physical stochastic process and an associated martingale. 

\smallskip

This paper follows the review article \cite{MultiNTE} which consolidates the existing $c_0$-semigroup approach and how it relates to 
the stochastic representation.
A deeper subsequent analysis in the direction of our second objective is continued in the accompanying paper \cite{SNTE-II}. Further numerical and Monte-Carlo considerations based on our stochastic approach will also appear in forthcoming work \cite{MCNTE}.

\smallskip

In order to consider the probabilistic perspective, we start by defining the underlying stochastic processes which mimics the physics of neutron fission, transport, scattering and absorption.

\section{The physical process and the mild NTE}\label{2}Consider a neutron branching process (NBP), which at time $t\geq0$ is represented by a configuration of particles which are specified via their physical location and velocity in $D\times V$, say $\{(r_i(t), \upsilon_i(t)): i = 1,\dots , N_t\}$, where $N_t$ is the number of particles alive at time $t\ge 0$.
In order to describe the process, we will represent it as a process in the space of finite atomic measures
\begin{equation}
X_t(A) = \sum_{i=1}^{N_t}\delta_{(r_i(t), \upsilon_i(t))}(A), \qquad A\in\mathcal{B}(D\times V), \;t\ge 0,
\label{randommeasure}
\end{equation}
where $\delta$ is the Dirac measure, defined on $\mathcal{B}(D\times V)$, the Borel subsets of $D\times V$.
The evolution of $(X_t, t\geq 0)$ is a stochastic process valued in the space of atomic measures
$
\mathcal{M}(D\times V): = \{\textstyle{\sum_{i = 1}^n}\delta_{(r_i,\upsilon_i)}: n\in \mathbb{N}, (r_i,\upsilon_i)\in D\times V, i = 1,\cdots, n\}
$
which evolves randomly as follows.

\smallskip

A particle positioned at $r$ with velocity $\upsilon$ will continue to move along the trajectory $r + \upsilon t$, until one of the following things happens. 
\begin{enumerate}
\item[(i)] The particle leaves the physical domain $D$, in which case it is instantaneously killed. 
\item[(ii)] Independently of all other neutrons, a scattering event occurs when a neutron comes in close proximity to an atomic nucleus and, accordingly, makes an instantaneous change of velocity. For a neutron in the system with position and velocity $(r,\upsilon)$, if we write $T_{\texttt{s}}$ for the random time that scattering may occur, then independently of any other physical event that may affect the neutron, 
$
\Pr(T_{\texttt{s}}>t) = \exp\{-\textstyle{\int_0^t} \sigma_{\texttt{s}}(r+\upsilon s, \upsilon){\rm d}s \}, $ for $t\geq0.$
\smallskip

When scattering occurs at space-velocity $(r,\upsilon)$, the new velocity is selected in $V$ independently with probability $\pi_{\texttt{s}}(r, \upsilon, \upsilon')\d\upsilon'$. 
\item[(iii)] Independently of all other neutrons, a fission event occurs when a neutron smashes into an atomic nucleus. 
For a neutron in the system  with initial position and velocity $(r,\upsilon)$, if we write $T_{\texttt{f}}$ for the random time that fission may occur, then independently of any other physical event that may affect the neutron, 
$
\Pr(T_{\texttt{f}}>t) = \exp\{-\textstyle{\int_0^t} \sigma_{\texttt{f}}(r+\upsilon s, \upsilon){\rm d}s \},$ for $t\geq 0.
$
\smallskip

When fission occurs,  the smashing of the atomic nucleus produces  lower mass isotopes and releases  a random number of neutrons, say $N\geq 0$, which are ejected from the point of impact with randomly distributed, and possibly correlated, velocities, say $(\upsilon_i: i=1,\cdots, N)$. The outgoing velocities are described by  the atomic random measure 
\begin{equation}
\label{PP}
\mathcal{Z}(A): = \sum_{i= 1}^{N } \delta_{\upsilon_i}(A), \qquad A\in\mathcal{B}(V).
\end{equation} 

When fission occurs at location $r\in\mathbb{R}^d$ from a particle with incoming velocity $\upsilon\in{V}$, 
we denote by ${\mathcal P}_{(r,\upsilon)}$ the law of $\mathcal{Z}$.
The probabilities ${\mathcal P}_{(r,\upsilon)}$ are such that, for  $\upsilon'\in{V}$, for bounded and measurable $g: V\to[0,\infty)$,
\begin{align}
\int_V g(\upsilon')\pi_{\texttt{f}}(r, v, \upsilon')\d\upsilon' &= {\mathcal E}_{(r,\upsilon)}\left[\int_V g(\upsilon')\mathcal{Z}(\d \upsilon')\right]
=: {\mathcal E}_{(r,\upsilon)}[\langle g, \mathcal{Z}\rangle].
\label{Erv}
\end{align}
Note, the possibility that $\Pr(N = 0)>0$, which will be tantamount to neutron capture (that is, where a neutron slams into a nucleus but no fission results and the neutron is absorbed into the nucleus). 
\end{enumerate}
\smallskip

In essence, one may think of the process $X: = (X_t, t\geq 0)$ as a typed spatial Markov branching process, where  the type of each particle is the velocity  $\upsilon\in{V}$  and the underlying Markov motion is nothing more than movement in a straight line at velocity $\upsilon$.  

{\color{black}
\begin{remark}\rm
It is worth noting how the assumptions (H1)-(H3) play out in the construction of the NBP. Whilst they serve as a sufficient conditions, they are not necessary. For example, one could equally assume that e.g. there are two open domains $B_{\texttt s}$ and $B_{\texttt f}$ (which may or may not intersect)  contained in $D$ on which $\sigma_{\texttt{f}}(r, \upsilon) \pi_{\texttt{f}}(r, \upsilon, \upsilon')  >0$, for  $B_{\texttt s}\times V\times V$ and $
\sigma_{\texttt{f}}(r, \upsilon) \pi_{\texttt{f}}(r, \upsilon, \upsilon') >0$, for  $B_{\texttt f}\times V\times V$, respectively. This would ensure that, at least starting from some configurations  $(r,\upsilon)\in D\times V$, the NBP could access regions where scatter or fission  occurs with positive probability. From there, the particle system will thus propagate by allowing further opportunities for scatter or fission. That said, there will also be some initial configurations $(r,\upsilon)\in D\times V$ for which the particles will neither scatter nor undergo fission and head straight for the boundary $\partial D$, whereupon they are killed. 

\smallskip

This example informally alerts us to the notion of  {\it `irreducibility'} of the state space. For contrast, and to highlight the issue further, it is worth comparing the situation to  e.g.  a branching Brownian motion on a smooth, convex, bounded domain of $D\subseteq \mathbb{R}^d$ for which the branching rate is supported only on a subdomain $B$ strictly contained in $D$. In that setting, the Brownian motion of a given particle would always be able to `find' the region $B$ with positive probability, where branching can occur  (thus propagating the stochastic process in a non-trivial way). Through this comparison, we see that  the piecewise linear spatial paths of neutrons in the NBP, although simpler to depict than the path of a Brownian motion, are significantly more irregular. The assumption (H2) may be thus be thought of as a sufficient condition to ensure irreducibility of the state space by enforcing the possibility of either scatter or fission (but not necessarily the possibility of both), whereas assumption (H3) ensures that there is at least one area of the domain where fission occurs. The condition (H1) simply ensures that activity (scatter and fission) cannot happen too fast, and hence the eventuality of explosion in finite time does not appear in our forthcoming calculations.

\end{remark}
}

\begin{remark}\label{nonuniqueNBP}\rm
The NBP is parameterised by the quantities $\sigma_{\texttt s}, \pi_{\texttt s}, \sigma_{\texttt f}$ and the family of measures ${\mathcal P} =({\mathcal P}_{(r,\upsilon)} , r\in D,\upsilon\in V)$ and accordingly we refer to it as a $(\sigma_{\texttt s}, \pi_{\texttt s}, \sigma_{\texttt f}, \mathcal{P})$-NBP. It is associated to the NTE via the relation \eqref{Erv},  and, although a $(\sigma_{\texttt s}, \pi_{\texttt s}, \sigma_{\texttt f}, \mathcal{P})$-NBP is uniquely defined, a NBP specified by $(\sigma_{\texttt s}, \pi_{\texttt s}, \sigma_{\texttt f}, \pi_{\texttt f})$ alone is not.  

\smallskip

What is of importance for the purpose of our analysis, however, is that for the given quadruple $(\sigma_{\texttt s}, \pi_{\texttt s}, \sigma_{\texttt f}, \pi_{\texttt f})$, at least one $(\sigma_{\texttt s}, \pi_{\texttt s}, \sigma_{\texttt f}, \mathcal{P})$-NBP exists such that \eqref{Erv} holds. 
It is relatively easy to construct an example of $\mathcal{P}$ as such.

\smallskip

{\color{black}Indeed, let us suppose (H1) and (H3) hold.} Then, for a given $\pi_\texttt{f}$, define 
\[
n_{\texttt{max}}= \textstyle{\min\{k\geq 1: \sup_{(r, \upsilon) \in D\times V}\int_V \pi_{\texttt f}(r,\upsilon,\upsilon')\d \upsilon'\leq k\} }.
\]

The ensemble $(\upsilon_i, i =1,\cdots, N)$ is such that: (i) $N\in \{0,n_{\texttt{max}}\}$;  (ii) the probability of the event $\{N = n_{\texttt{max}}\}$ under $\mathcal{P}_{(r,\upsilon)}$ is given by $\textstyle{\int_V \pi_{\texttt f}(r,\upsilon,\upsilon'')\d \upsilon''/n_{\texttt{max}}}$; (iii)  on the event  $\{N = n_{\texttt{max}}\}$, each of the $n_{\texttt{max}}$ neutrons are released with the same velocities $\upsilon_1=\dots = \upsilon_{n_{\texttt{max}}}$; (iv) the distribution of this common velocity is given by 
\[
\mathcal{P}_{(r,\upsilon)}(\upsilon_i \in \d\upsilon'| N = n_{\texttt{max}} ) = \frac{\pi_{\texttt f}(r,\upsilon,\upsilon')}{\int_V \pi_{\texttt f}(r,\upsilon,\upsilon'')\d \upsilon''}\d\upsilon',
\]
for $i = 1,\dots, n_{\texttt{max}}$. 

\smallskip 

With the construction (i)-(iv) for $\mathcal{P}_{(r,\upsilon)}$, we have 
for bounded and measurable $g: V\to[0,\infty)$,
\begin{align*}
&\int_V g(\upsilon')\pi_{\texttt{f}}(r, v, \upsilon')\d\upsilon' \\
&= 
0 \times \left(1- \mathcal{P}_{(r,\upsilon)}(N = n_{\texttt{max}})\right) + \mathcal{P}_{(r,\upsilon)}(N = n_{\texttt{max}})n_{\texttt{max}}\int_V g(\upsilon')\mathcal{P}_{(r,\upsilon)}(\upsilon_i \in \d\upsilon'| N = n_{\texttt{max}} )\\
&=\frac{\int_V \pi_{\texttt f}(r,\upsilon,\upsilon'')\d \upsilon''}{n_{\texttt{max}}}
n_{\texttt{max}}\int_V g(\upsilon')
 \frac{\pi_{\texttt f}(r,\upsilon,\upsilon')}{\int_V \pi_{\texttt f}(r,\upsilon,\upsilon'')\d \upsilon''}\d\upsilon'\\
&=\int_V g(\upsilon')\pi_{\texttt{f}}(r, v, \upsilon')\d\upsilon',
\end{align*}
thus matching \eqref{Erv}, as required. 
\smallskip

It is interesting to note that the construction above is precisely what happens in industrial models of nuclear reactor cores (for which only the cross-sections $(\sigma_{\texttt s}, \pi_{\texttt s}, \sigma_{\texttt f}, \pi_{\texttt f})$ are known) when it comes to  Monte-Carlo simulation; see further discussion below as well as \cite{MCNTE}. 
\end{remark}

The maximum number of neutrons that can be emitted during a fission event with positive probability (for example in an  environment where the heaviest nucleus is {\it Uranium-235},  there are at most 143 neutrons that can be released in a fission event, albeit, in reality it is more likely that 2 or 3 are released). {\color{black}We will thus occasionally work with:
 
\smallskip

 {\bf (H4): Fission offspring are bounded in number  by the constant $n_{\texttt{max}}> 1$.}
 }
 \smallskip

In particular this means that 
$
\textstyle{\sup_{ r\in D, \upsilon\in V}\int_V\pi_{\texttt{f}}(r,\upsilon,\upsilon')\d \upsilon'\leq n_\texttt{max}}.
$

\smallskip

Write $\mathbb{P}_{\delta_{(r, \upsilon)}}$ for the the law of $X$ when issued from a single particle  with space-velocity configuration $(r,\upsilon)\in {D}\times V$.
More generally, for $\mu\in\mathcal{M}(D\times V)$, 
we understand 
$
\mathbb{P}_{\mu} : = \mathbb{P}_{\delta_{(r_1,\upsilon_1)}}\otimes\cdots\otimes\mathbb{P}_{\delta_{(r_n,\upsilon_n)}}$
 when $\mu = \textstyle{\sum_{i = 1}^n} \delta_{(r_i,\upsilon_i)}.
$
In other words, the process $X$ when issued from initial configuration $\mu$ , is equivalent to issuing $n$ independent copies of $X$, each with configuration $(r_i, \upsilon_i)$, $i = 1,\cdots, n$.
\smallskip

Like all spatial Markov branching processes, $(X, \mathbb{P})$, where $\mathbb{P} : = (\mathbb{P}_{\mu}, \mu\in\mathcal{M}(D\times V))$, respects the {\it Markov branching property} with respect to the filtration $\mathcal{F}_t: = \sigma((r_i(s),\upsilon_i(s)) : i = 1,\cdots, N_s, s\leq t)$, $t\geq 0$.  That is to say, for all bounded and measurable $g: {D}\times V\to [0,\infty)$ and $\mu\in\mathcal{M}(D\times V)$ written $\mu = \textstyle{\sum_{i = 1}^n\delta_{(r_i,\upsilon_i)}},
$
we have 
$
\textstyle{\mathbb{E}_{\mu}[\prod_{i=1}^{n}g(r_i, \upsilon_i)] = \prod_{i=1}^{n}
u_t[g](r_i, \upsilon_i)},$ for  $t\geq 0, r\in D, \upsilon\in{V},
$
where $\textstyle{u_t[g](r,\upsilon): = \mathbb{E}_{\delta_{(r, \upsilon)}}[\prod_{i=1}^{N_{t}}g(r_i(t), \upsilon_i(t))]}. 
$ 
{\color{black}
{\it In this setting it is also customary to work with the notion that the empty product is valued as unity; see \cite{INW1, INW2, INW3}.}
}

\smallskip

What is of particular interest to us in the context of the NTE is the {\it expectation semigroup} of the neutron branching process. More precisely, and with pre-emptive notation, we are interested in 
\begin{equation}
\psi_t[g](r,\upsilon) : = \mathbb{E}_{\delta_{(r, \upsilon)}}[\langle g, X_t \rangle], \qquad t\geq 0, r\in \bar{D}, \upsilon\in{V},
\label{semigroup}
\end{equation}
for $g\in L^+_\infty(D\times V)$, the space of non-negative uniformly bounded measurable  functions on  $D\times V$. 
Here we have made a slight abuse of notation (see $\langle \cdot,\cdot\rangle$ as it appears in \eqref{Erv}) and written $\langle g, X_t \rangle$ to mean $\textstyle{\int_{D\times V}} g(r,\upsilon)X_t(\d r,\d \upsilon)$.

\smallskip

To see why $(\psi_t, t\geq 0)$ deserves the name of expectation semigroup, it is a straightforward exercise with the help of the  Markov branching property to show that 
\begin{equation}
\psi_{t+s}[g](r,\upsilon) =\psi_t[\psi_s[g]](r,\upsilon)\qquad s,t\geq 0.
\label{branchingsemigroup}
\end{equation}

The connection of the expectation semigroup \eqref{semigroup} with the NTE \eqref{bNTE} was explored in the recent article \cite{MultiNTE} (see also older work in \cite{D, LPS}). In order to present the relevant findings, let us momentarily introduce some notation. 
The deterministic evolution $
\texttt{U}_t[g]( r,\upsilon) = g( r+\upsilon t, \upsilon)\mathbf{1}_{\{t<\kappa^D_{r,\upsilon}\}}, t\geq 0,
$
and $
\kappa_{r,\upsilon}^{D} := \inf\{t>0 : r+\upsilon t\not\in D\},
$
represents the advection semigroup associated with a single neutron travelling at velocity $\upsilon$ from $r$.
The backwards scatter operator is denoted by
\begin{equation}
{\bS}{f}(r, \upsilon) = \sigma_{\texttt{s}}(r, \upsilon)\int_{V}{f}(r, \upsilon') \pi_{\texttt{s}}(r, \upsilon, \upsilon') \d\upsilon'  - \sigma_{\texttt{s}}(r, \upsilon){f}(r, \upsilon) 
\label{S}
\end{equation} 
and the backwards fission operator is given by 
\begin{equation}
{\bF}{f}(r, \upsilon)  =  \sigma_{\texttt{f}}(r, \upsilon) \int_{V}{f}(r, \upsilon') \pi_{\texttt{f}}(r, \upsilon, \upsilon')\d\upsilon' -\sigma_{\texttt{f}}(r, \upsilon)f(r,\upsilon),
\label{F}
\end{equation}
for $f\in L^+_\infty(D\times V)$,
such that both $\bS$ and $\bF$  are defined on $D\times V$ and zero otherwise.

\begin{lemma}[\cite{MultiNTE}]\label{NBPrep}Under (H1) and (H2), for $g\in L^+_\infty(D\times V)$, there exist  constants $C_1,C_2>0$ such that  $\psi_t[g]$, as given in \eqref{semigroup}, is uniformly bounded by $ C_1\exp(C_2 t)$, for all $t\geq 0$. Moreover, $(\psi_t[g], t\geq 0)$ is the unique solution, which is bounded in time, to  the so-called mild equation (also called a {\it Duhamel solution} in the PDE literature):
\begin{equation}
\psi_t[g] = \emph{\texttt{U}}_t[g] + \int_0^t \emph{\texttt{U}}_s[({\ebS} + {\ebF})\psi_{t-s}[g]]\d s, \qquad t\geq 0,
\label{mild}
\end{equation}
for which \eqref{BC} holds.
\end{lemma}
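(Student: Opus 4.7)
\emph{Exponential bound.} I would first note that $|\psi_t[g](r,\upsilon)| \le \|g\|_\infty m_t(r,\upsilon)$ with $m_t(r,\upsilon) := \mathbb{E}_{\delta_{(r,\upsilon)}}[N_t]$, so it is enough to control $m_t$. Conditioning on the first scatter/fission event of the initial particle and invoking the Markov branching property gives a linear Volterra integral equation for $m_t$ whose coefficients are uniformly bounded by virtue of (H1) (the mean number of offspring per fission event is controlled by $\sup_{(r,\upsilon)}\int_V \pi_{\texttt f}(r,\upsilon,\upsilon')\d\upsilon'$, which is finite under (H1)). Taking the supremum over $(r,\upsilon)$ and applying Grönwall's inequality then yields $\sup_{(r,\upsilon)} m_t(r,\upsilon) \le e^{Ct}$ for a constant $C$ depending only on the bounds furnished by (H1); this gives the claimed exponential bound with $C_1 = \|g\|_\infty$ and $C_2 = C$.

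\emph{Mild equation.} Next, I would apply a first-event decomposition for the initial particle, conditioning on the exit time $\kappa^D_{r,\upsilon}$ and on the time of first scatter or fission, combined with the Markov branching property, to obtain the ``killed'' Duhamel form
\begin{align*}
\psi_t[g](r,\upsilon) &= g(r+\upsilon t,\upsilon)\mathbf{1}_{\{t<\kappa^D_{r,\upsilon}\}} e^{-\int_0^t \sigma(r+\upsilon u,\upsilon)\d u} \\
&\quad + \int_0^{t\wedge \kappa^D_{r,\upsilon}} e^{-\int_0^s \sigma(r+\upsilon u,\upsilon)\d u}\bigl[(\bS + \bF + \sigma I)\psi_{t-s}[g]\bigr](r+\upsilon s,\upsilon)\,\d s.
\end{align*}
To convert this into \eqref{mild}, I would establish the Volterra identity
\[
\tilde{\texttt{U}}_s[h] = \texttt{U}_s[h] - \int_0^s \texttt{U}_u[\sigma\, \tilde{\texttt{U}}_{s-u}[h]]\,\d u,
\]
where $\tilde{\texttt{U}}_s[h](r,\upsilon) := h(r+\upsilon s,\upsilon)\mathbf{1}_{\{s<\kappa^D_{r,\upsilon}\}}e^{-\int_0^s \sigma(r+\upsilon u,\upsilon)\d u}$; this identity follows from an elementary calculation with $\frac{\d}{\d u}e^{-\int_u^s \sigma(r+\upsilon v,\upsilon)\d v}$ integrated from $0$ to $s$. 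Applying the identity to $h=g$ in the leading term and to $h = (\bS + \bF + \sigma I)\psi_{t-s}[g]$ inside the integrand, then swapping the order of integration by Fubini and re-substituting the killed equation itself, the $e^{-\int\sigma}$ factors cancel and one is left with \eqref{mild}. This algebraic passage from the killed to the unkilled form is the main technical obstacle: it is essentially a Dyson-series rearrangement trading the potential $-\sigma I$ inside the perturbation for the multiplicative discount $e^{-\int_0^s\sigma}$, and the Fubini swap is justified by the exponential bound from the previous step.

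\emph{Uniqueness.} Finally, for uniqueness I would take two solutions $\phi_t, \phi'_t$ of \eqref{mild} bounded on $[0,T]\times D\times V$ and observe that $h_t := \phi_t - \phi'_t$ satisfies the homogeneous Volterra equation $h_t = \int_0^t \texttt{U}_s[(\bS+\bF)h_{t-s}]\,\d s$. Since $\texttt{U}_s$ is an $L^\infty$-contraction on $D\times V$ and $\bS + \bF$ has bounded operator norm on $L^\infty$ under (H1), one obtains $\|h_t\|_\infty \le C'\int_0^t \|h_u\|_\infty\,\d u$, and Grönwall's inequality then forces $h \equiv 0$ on $[0,T]$; since $T$ is arbitrary, uniqueness follows.
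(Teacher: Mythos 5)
Your proof is correct and follows the same route the paper sketches and attributes to \cite{MultiNTE}: a first-scatter/fission decomposition of $\psi_t[g]$, a Volterra rearrangement trading the potential $-\sigma$ for the multiplicative discount $e^{-\int_0^s\sigma}$ (the device this paper elsewhere invokes as Dynkin's Lemma~1.2, Chapter~4), and Gr\"onwall's lemma for both the exponential bound and uniqueness. Note that your argument uses only (H1) together with the boundedness of $V$, so (H2) in the hypothesis is not actually invoked --- consistent with the paper's own remark that this lemma concerns only the mean semigroup and not the pathwise behaviour of the NBP.
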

The fact that \eqref{semigroup} solves \eqref{mild} is a simple matter of conditioning the expression in \eqref{semigroup} on the first fission or scatter event (whichever occurs first) and rearranging the resulting equation. Uniqueness is a matter of working  in  the right way with Gr\"onwall's Lemma. The association of \eqref{mild} with \eqref{bNTE} in this way  was also explored in Theorem 7.1 \cite{MultiNTE}, where it was shown that the unique  solution to \eqref{bNTE} when seen as an abstract Cauchy problem on $L_2(D\times V)$ agrees with the unique solution to \eqref{mild} in the  $L_2(D\times V)$ norm.

\smallskip

{\color{black}The reader should note that we do not need (H3) or (H4) as the result does not require information about the pathwise behaviour of any associated underlying stochastic processes. Nor does it distinguish between the settings that $\bF$ is present or not.}

\section{Perron-Frobenius asymptotics}
As alluded to above, 
one of the  classical ways in  which neutron flux is understood is to look for the leading eigenvalue and associated ground state eigenfunction. Roughly speaking, this means looking for an associated triple of eigenvalue $\lambda_*\in\mathbb{R}$,  positive right eigenfunction  $\varphi: {D}\times V\to[0,\infty)$, a left eigenmeasure $\tilde\varphi(r,\upsilon)\d r \d \upsilon$ on ${D}\times V$  in $L^+_2(D\times V)$ (the cone of non-negative square integrable functions on ${D}\times V$) such that 
$
\langle f, \psi_t[\varphi]\rangle={\rm e}^{\lambda_* t} \langle f,\varphi  \rangle$  and 
$ \langle\tilde\varphi , \psi_t[f]\rangle
 ={\rm e}^{\lambda_* t}\langle\tilde{\varphi}, f \rangle,$ for $t\geq 0$. Here, we again abuse our notation (see the use of $\langle\cdot,\cdot\rangle$ in \eqref{Erv} and \eqref{semigroup}) and write, for $f, g\in L^+_2(D\times V)$,  $\langle f, g\rangle =  \textstyle{\int_{D\times V}}f(r,\upsilon)g(r,\upsilon)\d r\d\upsilon.
$
With the eigentriple in hand, it is a common point of analysis that, to leading order, the NTE \eqref{bNTE} is solved through the approximation
\begin{equation}
\psi_t(r,\upsilon) = {\rm e}^{\lambda_* t}\langle g, \tilde{\varphi}\rangle \varphi(r, \upsilon) + o({\rm e}^{\lambda_* t}), \qquad t\geq 0, r\in D, \upsilon\in V,
\label{sim}
\end{equation}
where the sense of the equality depends on how one interprets the NTE (i.e. as an abstract Cauchy problem or in its mild form).

\begin{figure}[h!]
\includegraphics[width= 0.23\textwidth]{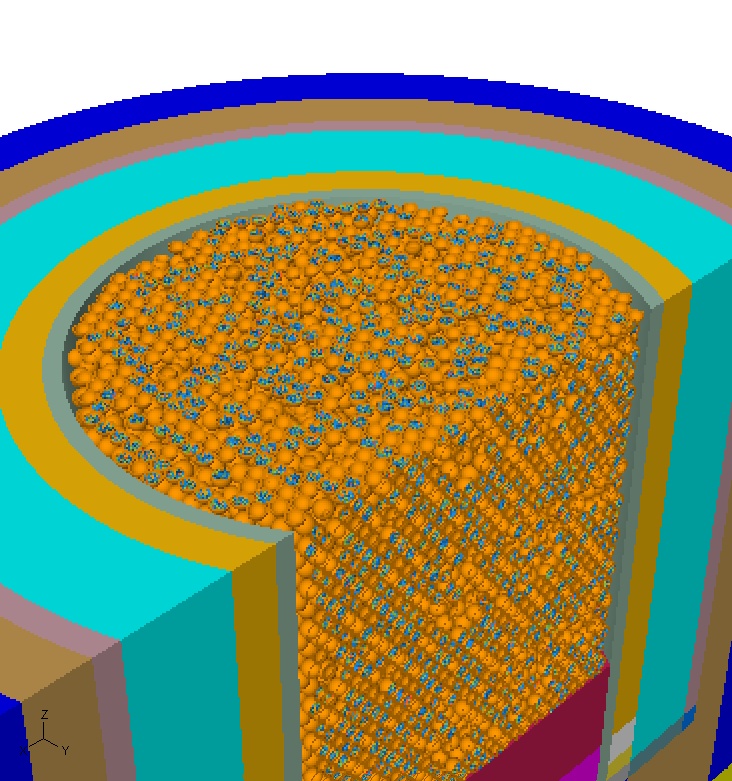}
\includegraphics[width= 0.23\textwidth]{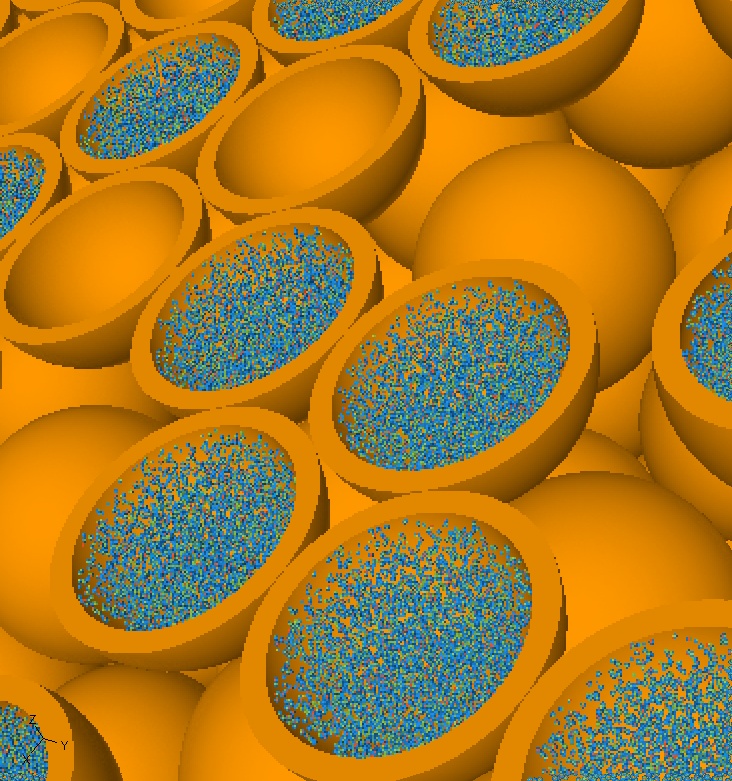}
\includegraphics[width= 0.23\textwidth]{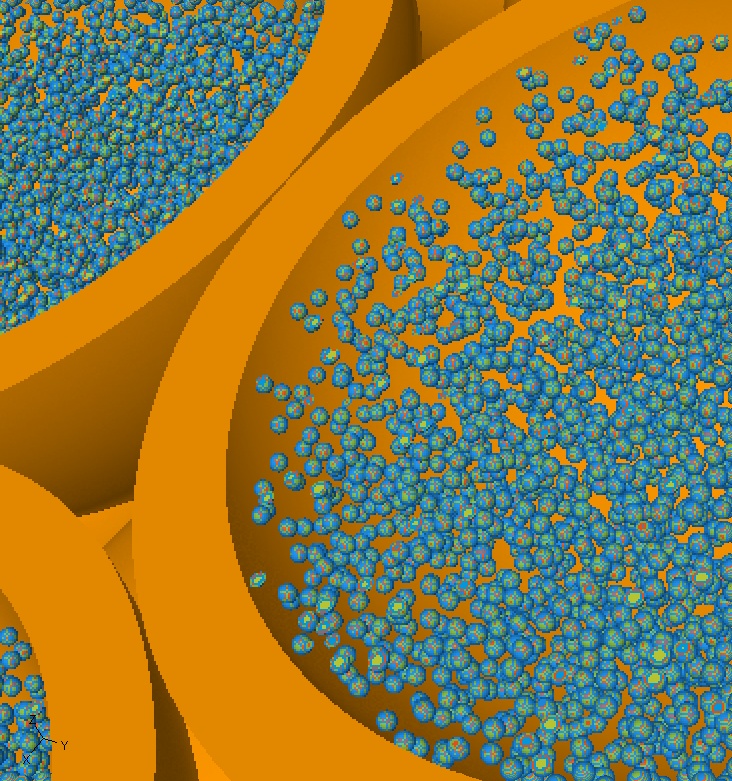}
\includegraphics[width= 0.23\textwidth]{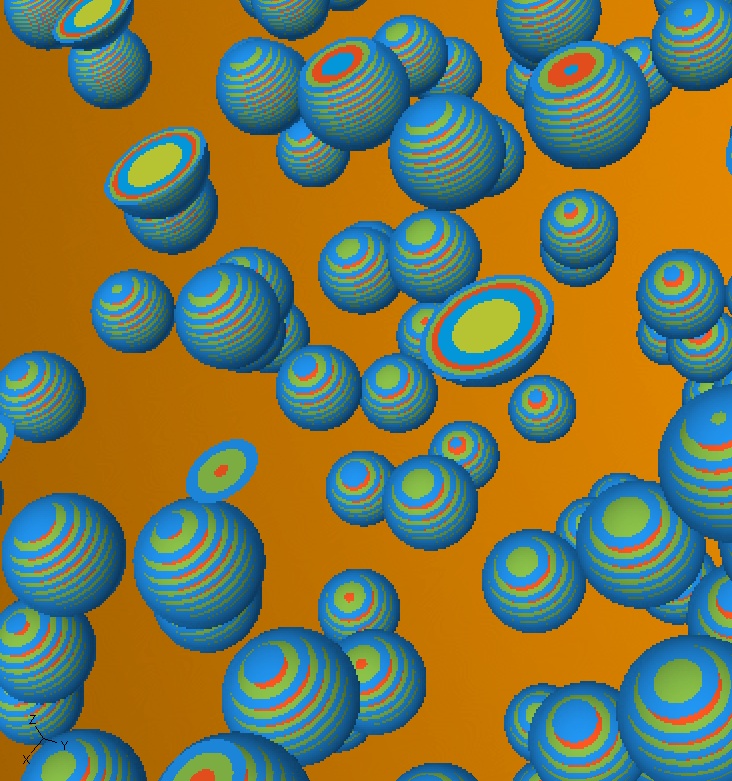}
\caption{\rm Zooming into a  virtual model of a pebble bed nuclear reactor core made from tennis ball sized orbs which  encapsulate uranium pellets. from left to right, the diagrams illustrate detail from metres down to millimetres. Colour indicating the different regions in which the cross sections $ \sigma_{\texttt{s}},  \sigma_{\texttt{f}},  \pi_{\texttt{s}},  \pi_{\texttt{f}}$ are constant.  The structural design of such a reactor  can easily be stored as a virtual environment (i.e. storing the coordinates of the different geometrical domains and the material properties in each domain) with just circa 150MB of data, on to which extensive  data libraries of industrial numerical values for the respective quantities $ \sigma_{\texttt{s}},  \sigma_{\texttt{f}},  \pi_{\texttt{s}},  \pi_{\texttt{f}}$  can be mapped. }
\label{reactorcore}
\end{figure}

\smallskip

The eigenfunction $\tilde\varphi$ is called the {\it importance  map} and offers a quasi-stationary  profile of radioactive activity (unless $\lambda_*=0$, in which case it is a stationary profile). 
Indeed, in
modern nuclear reactor  design and safety regulation, it is usually the case that
virtual reactor models such as the one seen in Figure \ref{reactorcore} (an example of a uranium pebble bed reactor) are
designed such that $\lambda_* = 0$ and the behaviour of $\tilde\varphi$
within spatial domains on the human scale remains within regulated levels.
Existing physics and engineering literature with focus on applications in the nuclear regulation industry, has largely been concerned with different numerical methods for estimating the value of the eigenvalue $\lambda_*$ as well as the eigenfunction $\varphi$ and eigenmeasure $\tilde{\varphi}(r,\upsilon)\d r\d\upsilon$. 
Giving a sensible meaning to \eqref{sim} will play an important part in unraveling the analysis of stochastic representations of solutions to the NTE as well.  Moreover, in additional forthcoming work \cite{MCNTE}, we will also see that our asymptotic \eqref{sim}, together with the accompanying stochastic analysis developed here, has influence on a number of completely new Monte Carlo methods associated with the NTE that, in turn, bears relevance to the applied NTE literature.  
\smallskip

The approximation \eqref{sim} can be seen as a functional version of the Perron-Frobenius Theorem, in particular when noting via \eqref{mild} that we can understand  $\psi_t[g]$ as  a semigroup.
Many attempts have been made to generalise the notion of the Perron-Frobenius decomposition  to  semigroups of Markov processes  with countable and  uncountable state spaces, as well as with killing and mass creation (see  for example \cite{DMT, T1, T2, T3}), using what has come to be known as $R$-theory.  The conditions there seem difficult to verify in the current setting. 

\smallskip

More recently,  \cite{CV, CVecp} have provided an alternative approach to the $R$-theory presented in aforementioned works. In the current context,  Theorem 2.1  and Proposition 2.3 of \cite{CV} will help us to achieve the  global result, given below. {\color{black}
To state it we need to introduce the quantity 
\begin{align}
\alpha(r,\upsilon)\pi(r, \upsilon, \upsilon') = \sigma_{\texttt{s}}(r,\upsilon)\pi_{\texttt{s}}(r, \upsilon, \upsilon') + \sigma_{\texttt{f}}(r,\upsilon) \pi_{\texttt{f}}(r, \upsilon, \upsilon')\qquad r\in D, \upsilon, \upsilon'\in V,
\label{alpha}
\end{align}
where $\pi$ is taken to be a probability density. As such it necessarily follows that 
\begin{equation}
\alpha(r,\upsilon) = \sigma_{\texttt s}(r,\upsilon) + \sigma_{\texttt{f}}(r,\upsilon) \int_V\pi_{\texttt{f}}(r, \upsilon, \upsilon')\d \upsilon'.
\label{aalone}
\end{equation}
}

\begin{theorem}\label{CVtheorem}
Suppose that (H1) holds as well as  
\smallskip

{\bf {\color{black}(H2)$^*$: 
$\textstyle{\inf_{r\in D, \upsilon, \upsilon'\in V} \alpha(r,\upsilon)\pi(r,\upsilon,\upsilon')>0.}$
}
}
\smallskip

Then, for the semigroup  $(\psi_t,t\geq0)$ identified by \eqref{mild},  there exists  a $\lambda_*\in\mathbb{R}$, a  positive\footnote{To be precise, by a positive eigenfunction, we mean a mapping from $D\times V\to (0,\infty)$. This does not prevent it being valued zero on $\partial D$, as $D$ is an open bounded, convex domain.} right eigenfunction $\varphi \in L^+_\infty(D\times V)$ and a left eigenmeasure which is absolutely continuous with respect to Lebesgue measure on $D\times V$ with density $\tilde\varphi\in L^+_\infty(D\times V)$, both having associated eigenvalue ${\rm e}^{\lambda_* t}$, and such that $\varphi$  (resp. $\tilde\varphi$) is uniformly (resp. a.e. uniformly) bounded away from zero on each compactly embedded subset of $D\times V$. In particular, for all $g\in L^+_{\infty}(D\times V)$,
\begin{equation}
\langle\tilde\varphi, \psi_t[g]\rangle = {\rm e}^{\lambda_* t}\langle\tilde\varphi,g\rangle\quad  \text{(resp. } 
\psi_t[\varphi] = {\rm e}^{\lambda_* t}\varphi
\text{)} \quad t\ge 0.
\label{leftandright}
\end{equation}
Moreover, there exists $\varepsilon>0$ such that 
\begin{equation}
\sup_{g\in L^+_\infty(D\times V): \norm{g}_\infty\leq 1}  \left\|{\rm e}^{-\lambda_* t}{\varphi}^{-1}{\psi_t[g]}-\langle\tilde\varphi, g\rangle\right\|_\infty = O({\rm e}^{-\varepsilon t}) \text{ as $t\rightarrow\infty$.}
\label{spectralexpsgp}
\end{equation}
%
\end{theorem}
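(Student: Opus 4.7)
My strategy is to apply the general Perron--Frobenius theorem of Champagnat and Villemonais (Theorem~2.1 and Proposition~2.3 of \cite{CV}) to the expectation semigroup $(\psi_t, t \ge 0)$ identified in Lemma~\ref{NBPrep}. Their framework delivers the eigentriple $(\lambda_*, \varphi, \tilde\varphi)$ together with the exponential convergence \eqref{spectralexpsgp} as soon as one verifies a two-sided Doeblin-type bound: there exist $t_0 > 0$, a probability measure $\nu$ on $D \times V$ and constants $c_1, c_2 > 0$ such that, writing $\psi_{t_0}(r,\upsilon; A) := \psi_{t_0}[\mathbf{1}_A](r,\upsilon)$,
\begin{equation*}
c_1\, \nu(A)\, \psi_{t_0}(r,\upsilon; D\times V) \ \le\ \psi_{t_0}(r,\upsilon; A) \ \le\ c_2\, \nu(A)\, \psi_{t_0}(r,\upsilon; D\times V),
\end{equation*}
uniformly in $(r,\upsilon)$ ranging over a well-chosen compactly embedded subset of $D \times V$ (the full statement then follows by one propagation under the semigroup).

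\textbf{Many-to-one reduction.} Iterating \eqref{mild} and using
\begin{equation*}
(\bS + \bF)f(r,\upsilon) = \alpha(r,\upsilon)\int_V f(r,\upsilon')\pi(r,\upsilon,\upsilon')\,\d\upsilon' - \sigma(r,\upsilon) f(r,\upsilon),
\end{equation*}
with $\alpha, \pi$ as in \eqref{alpha}, I obtain the Feynman--Kac representation
\begin{equation*}
\psi_t[g](r,\upsilon) = \mathbb{E}_{(r,\upsilon)}\!\left[\exp\!\left(\int_0^t (\alpha - \sigma)(\eta_s)\,\d s\right) g(\eta_t)\,\mathbf{1}_{\{t < \tau_D\}}\right],
\end{equation*}
where $\eta = (R_s, \Upsilon_s)_{s \ge 0}$ is the single piecewise-linear Markov process moving along $R_{s+h} = R_s + h\Upsilon_s$ between jumps, jumping at rate $\alpha(\eta_s)$ to a new velocity drawn from $\pi(R_s, \Upsilon_s, \cdot)$, and killed at the first exit time $\tau_D$ of $R$ from $D$. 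By (H1), both $\alpha$ and $|\alpha - \sigma|$ are bounded, so the Feynman--Kac weight lies in $[\e^{-Ct_0},\e^{Ct_0}]$ on the window $[0,t_0]$ for some $C>0$.

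\textbf{Minorization and conclusion.} The core technical step --- and the main obstacle --- is producing the Doeblin bound for the weighted sub-Markov kernel of $\eta_{t_0}$. Under (H2)$^*$, $\alpha\pi$ is uniformly bounded below by some $\kappa > 0$, so jumps of $\eta$ occur at a rate bounded below and, conditionally on a jump, the post-jump velocity has a density in $\upsilon'$ that is bounded below uniformly in the pre-jump configuration. Choosing $t_0 > 2\,\mathrm{diam}(D)/{\texttt v}_{\texttt{min}}$ and a compactly embedded $K \subset D \times V$, I plan to lower-bound the probability that $\eta$ realises exactly two scattering events in $(0,t_0)$: a first jump shortly after time $0$ that redirects the particle through the interior of $D$ toward a neighbourhood of any prescribed target $(r',\upsilon') \in K$, followed by a second jump close to $t_0$ that matches the terminal velocity to $\upsilon'$. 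The subtle point is uniformity in the initial configuration: for $(r,\upsilon)$ close to $\partial D$ with $\upsilon \cdot \mathbf{n}_r > 0$ the deterministic exit time $\kappa^D_{r,\upsilon}$ can be arbitrarily small, so the first scatter must be forced to occur before exit. This is handled by enlarging $t_0$ so that $\eta$ first re-enters a ``good'' interior set on which $\kappa^D$ is bounded below, and then executes the two-scatter path from there; boundedness of $\alpha$ together with the Feynman--Kac control gives the required uniform lower bound. The matching upper bound is immediate from the uniform boundedness of $\alpha\pi$ and the Feynman--Kac weight. With Doeblin's condition in force, Theorem~2.1 of \cite{CV} produces $\lambda_*$, $\varphi$ and $\tilde\varphi$ satisfying \eqref{leftandright}, and Proposition~2.3 of \cite{CV} upgrades the convergence to the uniform spectral gap \eqref{spectralexpsgp}. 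Finally, positivity of $\varphi$ (resp.\ of $\tilde\varphi$) on any compactly embedded subset follows at once from the same two-scatter path construction applied to $\psi_{t_0}[\varphi]$ (resp.\ to the dual semigroup).
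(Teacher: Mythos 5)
Your high-level plan is the same as the paper's: reduce via the many-to-one formula to a single piecewise-linear, velocity-jumping Markov process $\eta = (R,\Upsilon)$ with a bounded Feynman--Kac potential, then invoke Champagnat--Villemonais to produce the eigentriple and the uniform spectral gap. The Feynman--Kac representation you write is exactly Lemma~\ref{NRWrep}, and the lower-bound minorization via a deliberately scheduled two-scatter path is in the spirit of the paper's Lemma~\ref{minlem} and Lemma~\ref{TB}.

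However, there is a genuine gap in the claim that ``the matching upper bound is immediate from the uniform boundedness of $\alpha\pi$ and the Feynman--Kac weight.'' It is not. Boundedness of $\alpha\pi$ controls the \emph{velocity} kernel, but the \emph{spatial} density of $R$ after a small number of jumps is singular near the starting point: writing the post-jump position in polar coordinates, the density of $\upsilon_1 j_2$ (velocity times exponential waiting time) behaves like $\mathrm{e}^{-c|x|}/|x|^2$ in $x\in\mathbb{R}^3$, which is in $L^p$ only for $p<3/2$ and in particular is not bounded. Boundedness of the law of the process at a fixed time is therefore not a one- or two-jump statement; it is obtained only after enough jumps that the repeated convolution smooths out the singularity. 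This is precisely why the paper's Lemma~\ref{jumps} looks at $J_7$: six convolutions of an $L^{6/5}$ density give an $L^\infty$ density by Young's inequality, and this is what ultimately feeds into the verification of \ref{itm:A2}. Your proposal never confronts this, and without it the Doeblin upper bound you posit is unsupported. Relatedly, \ref{itm:A2} in \cite{CV} is a Harnack-type inequality $\mathbf{P}_\nu(t<\texttt{k}) \geq c_2\,\mathbf{P}_{(r,\upsilon)}(t<\texttt{k})$ that must hold for \emph{all} $t\geq 0$ uniformly in the initial configuration --- a fixed-time two-sided Doeblin bound over a compactly embedded subset plus a vague ``one propagation'' step does not deliver that, especially because the initial configuration ranges over all of $D\times V$, including points arbitrarily close to $\partial D$ where the exit time can be arbitrarily small. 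You mention this boundary issue, and the fix you sketch (force an early scatter into an interior set) is the right idea for the lower bound and matches the paper's use of \ref{itm:B1}--\ref{itm:B2}; but the corresponding extension for the upper bound needs the jump-count argument, not just the Feynman--Kac control. Finally, the uniform lower bound over $D_\varepsilon$ needs a chaining argument across a finite covering by small balls (Lemma~\ref{TB} and the iteration in the proof of \ref{itm:A1}); a single two-scatter path reaches only a neighbourhood of one target, and you need to propagate the minorization to all of $D_\varepsilon$.
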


This result differs significantly from what is already in the
literature principally through the assumptions on the cross-sections, the  strict positivity  properties and the uniform
boundedness of $\varphi, \tilde\varphi$ and uniformity in the mode of
convergence. In existing literature \eqref{spectralexpsgp} is usually given in the  $L_p$ setting, where $1<p<\infty$
is strictly enforced due to the nature of the $c_0$-semigroup
perturbation analysis; cf. \cite{DL6, M-K} and the discussion in \cite{MultiNTE}. 
\smallskip

The proof of Theorem \ref{CVtheorem}  is a
 non-trivial application of the recent theory of \cite{CV,
  CVecp} in that verifying their assumptions (which essentially leads
to the full statement of Theorem \ref{CVtheorem}) is highly
technical, taking account of the dimension of the system and the
piecewise linear (and hence irregular) nature of the neutron paths in
the underlying NBP.

\smallskip

{\color{black} Once again the assumptions (H3) and (H4) are unnecessary. As we shall shortly see, the result relies on the treatment of the sum of the operators $\bS+\bF$ as a single object, re-written as a scattering generator with action
\[
\bS' f(r,\upsilon) = \int_V (f(r,\upsilon) -f(r,\upsilon'))\alpha(r,\upsilon)\pi(r,\upsilon, \upsilon')\d\upsilon', \qquad r\in D, \upsilon \in V.
\]
 The assumption (H2)$^*$ is a condition on the intensity of this new generator. In this sense, the need for fission or for control of the pathwise behaviour of number of offspring (other than through their mean) is not needed.}

\section{Neutron random walk and many-to-one methodology}\label{sectNRW}
There is a second stochastic representation of the unique solution to \eqref{mild} which will form the basis of our proof of Theorem \ref{CVtheorem}. In order to describe it, we need to introduce the notion of a {\it neutron random walk} (NRW). 

\smallskip

A NRW on $D$ is defined by its scatter rates,
$\varsigma(r,\upsilon)$, $r\in D, \upsilon\in V$, and scatter probability
densities $\varpi(r,\upsilon,\upsilon')$,
$r\in D, \upsilon,\upsilon'\in V$ such that
$\textstyle{\int_V \varpi(r,\upsilon, \upsilon')\d\upsilon'}=1$ for all
$r\in D, \upsilon\in V$. Simply, when issued from $r$ with a velocity
$\upsilon$, the NRW will propagate linearly with that velocity until
either it exits the domain $D$, in which case it is killed, or at the
random time $T_{\texttt{s}}$ a scattering occurs, where
$ \Pr(T_{\texttt{s}}>t) = \exp\{-\textstyle{\int_0^t}
\varsigma(r+\upsilon s, \upsilon){\rm d}s \}, $ for $t\geq0.$ When the
scattering event occurs in position-velocity configuration
$(r,\upsilon)$, a new velocity $\upsilon'$ is selected with
probability $\varpi(r,\upsilon,\upsilon')\d\upsilon'$. If we denote by
$(R,\Upsilon) = ((R_t, \Upsilon_t), t\geq 0)$, the position-velocity
of the resulting continuous-time random walk on $D\times V$ with an
additional cemetery state $\{\dagger\}$ for when it leaves the domain
$D$, then it is easy to show that $(R,\Upsilon)$ is a Markov process.
Note, neither $R$ nor $\Upsilon$ alone is Markovian. We call the
process $(R,\Upsilon)$ an $\varsigma\varpi$-NRW. It is worth remarking that
when $\varsigma\varpi$ is given as a single rate function, the density
$\varpi$, and hence the rate $\varsigma$, is uniquely identified by 
normalising of the given product form $\varsigma\varpi$ to make it a probability distribution.

{\color{black}To  describe the second stochastic representation of \eqref{mild},  we  define
\begin{equation}
\beta(r,\upsilon)=\sigma_{\texttt{f}}(r,\upsilon)\left(\int_V\pi_{\texttt{f}}(r, \upsilon,\upsilon')\d\upsilon'-1\right)\geq -\sup_{r\in D, \upsilon\in V}\sigma_{\texttt{f}}(r,\upsilon)>-\infty,
\label{betadef}
\end{equation}
where the lower bound is due to assumption (H1).}
The following result was established in Lemma 7.1 of \cite{MultiNTE}.
\begin{lemma}[Many-to-one formula, \cite{MultiNTE}]\label{NRWrep} Under the assumptions of Lemma 
\ref{NBPrep}, we have the second representation 
\begin{equation}
\psi_t[g](r,\upsilon)  = \mathbf{E}_{(r,\upsilon)}\left[{\rm e}^{\int_0^t\beta(R_s, \Upsilon_s)\D s}g(R_t, \Upsilon_t) \mathbf{1}_{\{t < \tau^D\}}\right], \qquad t\geq 0,r\in D, \upsilon\in V,
\label{phi}
\end{equation}
where $\tau^D = \inf\{t>0 : R_t\not\in D\}$ and 
 ${\bf P}_{(r, v)}$ for the law of the $\alpha\pi$-NRW  starting from a single neutron with configuration $(r, \upsilon)$. 
\end{lemma}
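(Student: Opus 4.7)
The strategy is to denote the right-hand side of \eqref{phi} by $\Phi_t[g](r,\upsilon)$, show that $\Phi_t[g]$ satisfies the mild equation \eqref{mild} and is uniformly bounded on compact time intervals, and then invoke the uniqueness statement of Lemma \ref{NBPrep} to conclude $\Phi_t[g] = \psi_t[g]$.

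The algebraic heart of the argument is the identity
\begin{equation*}
(\bS + \bF)f(r,\upsilon) = \alpha(r,\upsilon)\int_V \bigl(f(r,\upsilon') - f(r,\upsilon)\bigr)\pi(r,\upsilon,\upsilon')\d\upsilon' + \beta(r,\upsilon) f(r,\upsilon),
\end{equation*}
which follows directly from \eqref{S}, \eqref{F}, \eqref{alpha}, \eqref{aalone} and \eqref{betadef} after using $\alpha - \sigma_{\texttt s} - \sigma_{\texttt f} = \beta$. The first summand is the jump part $\mathcal{L}$ of the generator of the $\alpha\pi$-NRW $(R,\Upsilon)$, so $\bS + \bF = \mathcal{L} + \beta$, and consequently $\Phi_t[g]$ is precisely the Feynman-Kac semigroup of $(R,\Upsilon)$ killed at $\partial D$, with bounded multiplicative potential $\beta$.

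Next, I would derive a Duhamel identity for $\Phi_t[g]$ around the pure advection semigroup $\texttt{U}_t$ by conditioning on the first velocity-jump time $T_{\texttt s}$ of $(R,\Upsilon)$---which has hazard rate $\alpha(r+\upsilon s,\upsilon)$ during the initial ballistic leg---and invoking the strong Markov property at $T_{\texttt s}$ on $\{T_{\texttt s} \leq t \wedge \kappa^D_{r,\upsilon}\}$. Using the cancellation $\alpha - \beta = \sigma_{\texttt s} + \sigma_{\texttt f}$ together with the algebraic identity above, the resulting Volterra equation rearranges into
\begin{equation*}
\Phi_t[g] = \texttt{U}_t[g] + \int_0^t \texttt{U}_s\bigl[(\bS + \bF)\Phi_{t-s}[g]\bigr]\d s,
\end{equation*}
which is \eqref{mild}. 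Since $\|\Phi_t[g]\|_\infty \leq \|g\|_\infty \exp(t\|\beta\|_\infty)$ under (H1), uniqueness in Lemma \ref{NBPrep} forces $\Phi_t[g] = \psi_t[g]$. The main technical obstacle is the Volterra rearrangement itself: the first-scatter decomposition naturally produces the decay factor $\exp(-\int_0^s(\sigma_{\texttt s}+\sigma_{\texttt f})\d u)$ attached to the ballistic part of the trajectory, and repackaging this into the form \eqref{mild}---where the decay is instead absorbed into the multiplicative part of $\bS + \bF$ acting on $\Phi$---requires careful but routine bookkeeping, with no ideas needed beyond the algebraic identity above.
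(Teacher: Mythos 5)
The paper does not include a proof of Lemma \ref{NRWrep}; it is imported verbatim from Lemma 7.1 of \cite{MultiNTE}. That said, your strategy is exactly the natural one and aligns with how the paper treats the companion Lemma \ref{NBPrep} (``conditioning on the first fission or scatter event and rearranging, then Gr\"onwall for uniqueness''). Your algebraic identity is correct: from \eqref{alpha}, \eqref{aalone} and \eqref{betadef} one has $\alpha - \beta = \sigma_{\texttt{s}} + \sigma_{\texttt{f}} = \sigma$, so indeed $(\bS+\bF)f = \alpha\int_V(f(\cdot,\upsilon')-f)\pi\,\d\upsilon' + \beta f$, which identifies the right-hand side of \eqref{phi} as the Feynman-Kac semigroup of the $\alpha\pi$-NRW killed at $\partial D$ with bounded potential $\beta$. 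The first-scatter decomposition of $\Phi_t[g]$ has the decay factor $\exp\{-\int_0^s(\alpha-\beta)(r+\upsilon u,\upsilon)\,\d u\}$ on the ballistic leg, and the repackaging into the unweighted form \eqref{mild} is the standard Dynkin/Duhamel conversion between additive and multiplicative potentials (the paper itself invokes Lemma 1.2, Chapter 4 of \cite{Dynkin2} for exactly this manoeuvre elsewhere), so your characterization of it as ``routine bookkeeping'' is fair. The bound $\|\Phi_t[g]\|_\infty \le \|g\|_\infty{\rm e}^{t\bar\beta}$ under (H1) then puts $\Phi_t[g]$ in the uniqueness class of Lemma \ref{NBPrep}, closing the argument. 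No gaps.
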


{\color{black}Noting that $\bar\beta := \textstyle{\sup_{r\in D, \upsilon\in V}\beta(r,\upsilon)}<\infty$ thanks to (H1)}, let us introduce  $\texttt{P}^\dagger: = (\texttt{P}^\dagger_t, t\geq 0)$ for the expectation semigroup of the $\alpha\pi$-neutron random walk with potential $\beta$, such as is represented by the semigroup \eqref{phi}, but now killed at rate $ (\bar\beta - \beta)$. More precisely, for $g\in L^+_\infty (D\times V)$,
\begin{align}
\texttt{P}^\dagger_t[g](r,\upsilon) &= \psi_t[g](r,\upsilon){\rm e}^{-\bar\beta t}\notag\\
& =\mathbf{E}_{(r,\upsilon)}\left[{\rm e}^{\int_0^t (\beta (R_s, \Upsilon_s) - \bar\beta )\d s}g(R_t, \Upsilon_t) \mathbf{1}_{\{t<\tau^D\}}\right]\notag \\
&=\mathbf{E}_{(r,\upsilon)}\left[g(R_t, \Upsilon_t) \mathbf{1}_{\{t<\texttt{k}\}}\right] \notag\\
&= :\mathbf{E}^\dagger_{(r,\upsilon)}\left[g(R_t, \Upsilon_t)\right] ,%
\qquad t\geq 0, r\in D, \upsilon\in V,
\label{boldPdagger}
\end{align}
 where 
 \begin{equation}
 \texttt{k} =\inf\{t>0: \int_0^t (\bar\beta - \beta(R_s, \Upsilon_s) )\d s >\mathbf{e}\}\wedge \tau^D,
 \label{kill}
 \end{equation}
and $\mathbf{e}$ is an independent exponentially distributed random variable with mean 1.
\smallskip

We will naturally write  $\mathbf{P}^{\dagger}_{(r,\upsilon)}$ for  the (sub)probability measure associated to $\mathbf{E}^{\dagger}_{(r,\upsilon)}$, $r\in \bar D, \upsilon\in V$. The family $\mathbf{P}^{\dagger}: = (\mathbf{P}^{\dagger}_{(r,\upsilon)}, r\in \bar D, \upsilon\in V)$  now defines a Markov family of probability measures on the path space of the neutron random walk with cemetery state $\{\dagger\}$, which is where the path is sent when hitting the boundary $\partial D$ or the clock associated to the killing rate $\bar\beta - \beta$ rings.
{\it We note for future calculations that we can extend the domain of  functions  on $D\times V$ to accommodate taking a value on $\{\dagger\}$ by insisting that this value is always 0.}
\smallskip

Our strategy for proving Theorem \ref{CVtheorem}  thus boils down to understanding the evolution of the semigroup of the NRW $((R,\Upsilon), \mathbf{P}^{\dagger})$.  In this sense, we  see that the essence of Theorem \ref{CVtheorem} is, roughly speaking, a classical Perron-Frobenius-type problem for the semigroup of a Markov process; namely $((R,\Upsilon), \mathbf{P}^\dagger)$.
\smallskip

{\color{black}
It is also worthy of note that, given the role the $\alpha\pi$-NRW in the proof of Theorem \ref{CVtheorem}, we can also interpret the role of the assumptions (H1) and (H2)$^*$ in terms of this process. The condition (H1)  ensures that scattering cannot occur too fast. We can describe the condition (H2)$^*$ by saying that it  offers   `strong  irreducibility' of the $\alpha\pi$-NRW (where e.g. we could say that  (H2) only offers `weak irreducibility'). 

\smallskip

As alluded to previously, we can also see why the absence of the
assumption (H3) is not a problem. In the event that
e.g. $\sigma_{\texttt f}\pi_{\texttt f}$ is identically zero, the
original NBP is nothing more than a
$\sigma_{\texttt s}\pi_{\texttt s}$-NRW, i.e.
$\alpha\pi = \sigma_{\texttt s}\pi_{\texttt s}$ and $\beta\leq 0$. As
such the analysis in the proof of Theorem \ref{CVtheorem}, which
fundamentally concerns a NRW with a `strictly irreducible' state space
and killing is still valid. Similarly, the inclusion of (H4) is not
necessary as we only need control over the kernel $\alpha\pi$ for the
purpose of analysing the associated NRW and not the pathwise
behaviour of the otherwise associated NBP.}

\section{The ground state martingale}\label{groundstatemgsection}
As an application of the Perron-Frobenius behaviour of the linear semigroups discussed in Theorem \ref{CVtheorem}, we complete the summary of the main results of this paper by discussing how the existence of the right eigenfunction $\varphi$ plays directly into the existence of a classical (ground state) martingale for the underlying physical process. Analogues of this martingale   appear in the setting of all spatial branching processes and is sometimes referred to there as `the additive martingale' (see for example the recent monograph \cite{Shi} which discusses the analogous setting for branching random walks, or \cite{Bertbook2} for fragmentation processes). 

\smallskip

Under the assumptions of Theorem  \ref{CVtheorem} thanks to the semigroup property of \eqref{semigroup} and the invariance of $\varphi$ in Theorem \ref{CVtheorem}, it is now easy to see that 
\begin{equation}
W_t: = {\rm e}^{-\lambda_* t}\frac{\langle \varphi, X_t\rangle}{\langle\varphi, \mu\rangle}, \qquad t\geq 0,
\label{mgdef}
\end{equation}
is a unit mean martingale under $\mathbb{P}_\mu$ where $\mu\in\mathcal{M}(D\times V)$. It is worth noting that this claim is not so easy to make under analogues of Theorem \ref{CVtheorem} found in previous literature (cf. \cite{D, DL6, M-K}) as the setting of the eigenfunction $\varphi$ in an $L_p$ space would make it difficult to make sense of expectations of inner product $\langle \varphi, X_t\rangle$ without saying more about the mean semigroup of $(X_t, t\geq0)$.  

\smallskip

As a non-negative martingale, the almost sure limit of \eqref{mgdef} is assured. Our second main result tells us precisely when this martingale  limit is non-zero. Before stating the theorem, we require one more assumption on the fission rate and kernel, which is a stricter version of (H3). 

\smallskip
{\color{black}
{\bf  (H3)$^*$: There exists an open ball $B$, compactly embedded in $D$, such that
$$
\inf_{r \in B, \upsilon, \upsilon' \in V}\sigma_{\texttt{f}}(r, \upsilon)\pi_{\texttt{f}}(r, \upsilon, \upsilon') > 0.
$$}}

\begin{theorem}\label{Kesten} For the $(\sigma_{\emph{\texttt s}}, \pi_{\emph{\texttt s}}, \sigma_{\emph{\texttt f}}, \mathcal{P})$-NBP satisfying  (H1), (H2)$^*$ and (H4), we have the following cases for the martingale $W = (W_t, t\geq 0)$:
\begin{itemize}
\item[(i)]  If $\lambda_*>0$ and (H3) holds, then $W$ is $L_1(\mathbb{P})$ convergent;
\item[(ii)] If $\lambda_*<0$ and (H3) holds, then $W_\infty =0$ almost surely;
\item[(iii)] If $\lambda_*=0$ and (H3)$^*$ holds, then $W_\infty =0$ almost surely.
\end{itemize} 
\end{theorem}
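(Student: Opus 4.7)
My plan is to handle (i) by a direct second--moment bound coming from a many--to--two formula, and to handle (ii) and (iii) by an additive--martingale change of measure in the spirit of Lyons--Kurtz--Pemantle.

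\textbf{Case (i).} Under (H4) the offspring number is bounded, so the two--point correlation structure of $X$ is governed by a bounded quadratic operator
\[
V[f](r,\upsilon) = \sigma_{\texttt f}(r,\upsilon)\, \mathcal{E}_{(r,\upsilon)}\!\Bigl[\langle f, \mathcal{Z}\rangle^2 - \langle f^2, \mathcal{Z}\rangle\Bigr].
\]
Conditioning on the first scatter/fission event yields the many--to--two identity
\[
\mathbb{E}_{\delta_{(r,\upsilon)}}\!\bigl[\langle \varphi, X_t\rangle^2\bigr] = \psi_t[\varphi^2](r,\upsilon) + \int_0^t \psi_s\!\bigl[V[\psi_{t-s}[\varphi]]\bigr](r,\upsilon)\,\d s.
\]
Using $\psi_{t-s}[\varphi]={\rm e}^{\lambda_*(t-s)}\varphi$ together with the uniform bound on ${\rm e}^{-\lambda_* t}\|\psi_t[g]\|_\infty$ from \eqref{spectralexpsgp}, both terms are $O({\rm e}^{2\lambda_* t})$ as $t\to\infty$, since for $\lambda_*>0$ the integral $\int_0^t {\rm e}^{2\lambda_*(t-s)}{\rm e}^{\lambda_* s}\d s$ is of order ${\rm e}^{2\lambda_* t}$. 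Dividing by ${\rm e}^{2\lambda_* t}$, $\sup_t \mathbb{E}_\mu[W_t^2] < \infty$, so $W$ is $L^2$--bounded, hence uniformly integrable and $L^1$--convergent.

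\textbf{Cases (ii)--(iii).} Introduce the additive--martingale change of measure
\[
\left.\frac{\d\mathbb{Q}_\mu}{\d\mathbb{P}_\mu}\right|_{\mathcal{F}_t} = W_t,\qquad t\ge 0.
\]
Under $\mathbb{Q}_\mu$, a standard spine construction yields a distinguished line of descent $(R_t,\Upsilon_t)_{t\ge 0}$ whose law is the $(\varphi,\lambda_*)$--Doob transform of the killed NRW $((R,\Upsilon),\mathbf{P}^\dagger)$, with invariant probability proportional to $\varphi(r,\upsilon)\tilde\varphi(r,\upsilon)\,\d r\,\d\upsilon$ by Theorem~\ref{CVtheorem}. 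At the spine's fission events the offspring distribution is size--biased by $\varphi$, and every non--spine offspring launches an independent copy of the original $(\sigma_{\texttt s},\pi_{\texttt s},\sigma_{\texttt f},\mathcal{P})$--NBP. The classical dichotomy $\{W_\infty = 0\text{ a.s. under }\mathbb{P}_\mu\}\Leftrightarrow \{\limsup_t W_t = \infty\text{ a.s. under }\mathbb{Q}_\mu\}$ reduces both claims to showing that $W_t\to\infty$ under $\mathbb{Q}_\mu$. Restricting $W_t$ to the spine's own contribution gives the lower bound
\[
W_t \;\ge\; {\rm e}^{-\lambda_* t}\,\frac{\varphi(R_t,\Upsilon_t)}{\langle \varphi,\mu\rangle}.
\]
For (ii), $\lambda_* < 0$ makes ${\rm e}^{-\lambda_* t}\to\infty$; ergodicity of the spine on its invariant density $\varphi\tilde\varphi$, which by Theorem~\ref{CVtheorem} is bounded away from $0$ on compact subsets of $D\times V$, prevents $\varphi(R_t,\Upsilon_t)$ from collapsing and the displayed bound diverges. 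For (iii) the factor ${\rm e}^{-\lambda_* t}=1$ is inert, so I instead exploit the $\varphi$--mass of the non--spine subtrees born at the spine's fission events: under (H3)$^*$, each visit of the spine to $B$ triggers with uniformly positive conditional probability a sibling birth, and each such sibling seeds an independent NBP with conditional $\mathbb{E}[\langle\varphi,\cdot\rangle]$ bounded below. Ergodicity of the spine forces infinitely many such visits to $B$ at a positive linear rate, and a Borel--Cantelli--type argument applied to these independent subtree contributions gives $W_t\to\infty$ $\mathbb{Q}_\mu$--a.s.

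\textbf{Main obstacle.} The key technical hurdle is (iii). One must first make the spine construction rigorous for this non--standard process (infinite type space $V$, piecewise linear motion, hard boundary killing), and then extract from (H2)$^*$ and (H3)$^*$ enough ergodicity of the $\varphi$--transformed NRW to guarantee linear--rate visits of the spine to $B$ as well as the quantitative almost sure lower bound on the immigrated $\varphi$--mass. The Perron--Frobenius output of Theorem~\ref{CVtheorem}, in particular the strict positivity of $\tilde\varphi$ on compacts and the spectral gap in \eqref{spectralexpsgp}, is the natural toolbox for this and should drive both the identification of the spine's invariant law and the mixing rate needed for Borel--Cantelli.
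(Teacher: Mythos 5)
Your setup — the Lyons–Kurtz–Pemantle change of measure, the spine decomposition, and the dichotomy $\{W_\infty=0 \text{ a.s.\ under }\mathbb{P}\} \Leftrightarrow \{\limsup_t W_t = \infty \text{ a.s.\ under }\mathbb{Q}\}$ — is exactly the paper's framework, and your treatment of case (ii) coincides with the paper's. Your case (i) is a genuinely different and self-contained route: you bound $\mathbb{E}[\langle\varphi,X_t\rangle^2]$ directly via a many-to-two identity and the quadratic operator $V$, and deduce $L^2$-boundedness. The paper instead proves $\limsup_t\mathbb{E}^\varphi_{\delta_{(r,\upsilon)}}[W_t]<\infty$ through the spine decomposition (bounding the immigration sum), concludes $\liminf_t W_t<\infty$ $\mathbb{P}^\varphi$-a.s.\ by Fatou, and only then extracts the $L^2$ statement in Corollary~\ref{L2} from that same bound. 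Your route is arguably more elementary; you should just note that the many-to-two identity you invoke is not proved in the paper and would need to be derived (e.g.\ by conditioning on the first fission, as in the proof of Lemma~\ref{NBPrep}) for the argument to be self-contained.

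For case (iii) there is a genuine gap in the mechanism. Two problems. First, you assert $W_t\to\infty$ under $\mathbb{Q}$; what the dichotomy actually requires, and what is in fact achievable here, is $\limsup_t W_t=\infty$ — in the critical case $W_t$ does not diverge monotonically. Second, and more seriously, your Borel–Cantelli argument over ``independent subtree contributions'' does not produce an unbounded limsup as stated. Each sibling subtree born off the spine at time $T_j$ is a critical NBP under $\mathbb{P}$; by Theorem~\ref{zeroset} it dies out almost surely, so its contribution $\langle\varphi, X^{(j)}_{t-T_j}\rangle$ to $W_t$ is eventually zero. Thus the contributions of siblings born at widely separated times do not accumulate: at any given $t$, only recently-born subtrees contribute, and a single sibling birth per visit to $B$ gives at best $\limsup W_t \ge \delta > 0$, not $=\infty$. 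To close the gap one needs \emph{simultaneous} accumulation of mass, i.e.\ many particles alive in a compact set at a single snapshot. This is precisely what the paper's construction does under (H3)$^*$: from a set $\Omega\subset D_\varepsilon\times V$, the spine enters $B$ and within a fixed time window $\iota$ produces $m(n_{\texttt{max}}-1)$ offspring, all of which are arranged (via a narrow-cone scattering path inside $B$) to still be in $D_\varepsilon$ at the end of the window, giving $X_{s_n}(D_\varepsilon\times V)\ge k$ with probability bounded below uniformly in $n$. Borel–Cantelli along the visit times $s_n$ then yields $\limsup_n X_{s_n}(D_\varepsilon\times V)\ge k$ a.s., and taking $k\to\infty$ gives $\limsup_t W_t\ge c\,\limsup_t X_t(D_\varepsilon\times V)=\infty$. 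You have correctly identified (H3)$^*$ and spine recurrence as the relevant inputs, but the single-sibling-per-visit mechanism you describe does not furnish the required simultaneity, so that part of the argument would need to be replaced by a construction of this type.
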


Although a subtle inclusion, the following theorem also frames Theorem \ref{Kesten} in a more tidy way, showing the zero set of the martingale limit agrees with extinction.

\begin{theorem}\label{zeroset}
{\color{black}In each of the three cases of Theorem \ref{Kesten}, we also have that the events $\{W_\infty=0\}$ and $\{\zeta <\infty\}$ almost surely agree, where $\zeta = \inf\{t>0: \langle 1, X_t\rangle = 0\}$ is the time of extinction of the NBP. In particular, there is almost sure extinction if and only if $\lambda_*\leq 0$.}
\end{theorem}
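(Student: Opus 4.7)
My plan is to establish the two inclusions $\{\zeta < \infty\} \subseteq \{W_\infty = 0\}$ and $\{W_\infty = 0\} \subseteq \{\zeta < \infty\}$ separately (each up to a null set) and then read off the dichotomy. The first is pathwise and immediate: once $\zeta < \infty$ we have $X_t \equiv 0$ for all $t > \zeta$, hence $\langle\varphi, X_t\rangle = 0$ and $W_t = 0$ eventually. For the reverse inclusion, the common framework is the branching Markov property: both
\[
p(r,\upsilon) := \mathbb{P}_{\delta_{(r,\upsilon)}}(\zeta < \infty), \qquad q(r,\upsilon) := \mathbb{P}_{\delta_{(r,\upsilon)}}(W_\infty = 0)
\]
are fixed points of the non-linear multiplicative semigroup $u_t[f](r,\upsilon) := \mathbb{E}_{\delta_{(r,\upsilon)}}[\prod_{i=1}^{N_t} f(r_i(t), \upsilon_i(t))]$, with $p \leq q$ pointwise by the first inclusion. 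The content is $p \geq q$, which I split into the three regimes of Theorem \ref{Kesten}.

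Case (ii), $\lambda_* < 0$, is the easiest: Theorem \ref{CVtheorem} applied to $g \equiv 1$ gives $\mathbb{E}_\mu[\langle 1, X_n\rangle] = O(e^{\lambda_* n})$ along $n \in \mathbb{N}$, which is summable, so Markov's inequality with Borel--Cantelli and the absorbing property of $\{\langle 1, X_t\rangle = 0\}$ force $\zeta < \infty$ a.s. Case (i), $\lambda_* > 0$, is handled via the backward martingale identity $\mathbb{E}_\mu[\mathbf{1}_{\{W_\infty=0\}} \mid \mathcal{F}_t] = \prod_{i=1}^{N_t} q(r_i(t), \upsilon_i(t))$. The $L_1$-convergence together with $\mathbb{E}_{\delta_{(r,\upsilon)}}[W_\infty] = 1$ yields $q(r,\upsilon) < 1$ for every $(r,\upsilon) \in D \times V$, and a bootstrap via the uniform ergodic estimate \eqref{spectralexpsgp} should upgrade this to $q \leq 1 - \varepsilon$ on each compact subset of $D \times V$. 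On $\{\zeta = \infty\}$, standard supercritical arguments (using (H3), (H4), and $\lambda_* > 0$) force $N_t \to \infty$ with a non-vanishing fraction of particles inside such a compact, so the right-hand side of the martingale identity tends to $0$ and rules out $\{W_\infty = 0,\, \zeta = \infty\}$ having positive probability.

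The main obstacle is case (iii), $\lambda_* = 0$. Here $q \equiv 1$ is already known from Theorem \ref{Kesten}(iii), so the content reduces to the separate statement $p \equiv 1$, i.e., a.s.\ extinction in the critical regime---and this cannot follow from any mean estimate, since $\mathbb{E}_\mu[\langle 1, X_t\rangle]$ need not decay. I plan to argue by contradiction: assuming $\mathbb{P}_\mu(\zeta = \infty) > 0$, combine $W_\infty = 0$ a.s.\ (whence $\langle \varphi, X_t\rangle \to 0$) with the strong irreducibility (H2)$^*$ of the underlying NRW and the uniform fission positivity (H3)$^*$ on the ball $B$ to show that, on the survival event, infinitely many fission events occur inside $B$. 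Since Theorem \ref{CVtheorem} gives $\inf_{B \times V}\varphi > 0$, this forces $\langle \varphi, X_t \rangle$ to remain bounded away from $0$ infinitely often, contradicting $\langle \varphi, X_t\rangle \to 0$. Formalising this persistence-in-$B$ step---a branching renewal argument in which (H4) prevents the branching from exploding and (H3)$^*$ provides a uniform lower bound on fission in $B$---is the technical crux. The ``in particular'' dichotomy then falls out: in cases (ii) and (iii) we have $W_\infty = 0$ a.s., hence $\zeta < \infty$ a.s., while in case (i), $\mathbb{E}_\mu[W_\infty] = 1$ gives $\mathbb{P}_\mu(W_\infty > 0) > 0$ and therefore $\mathbb{P}_\mu(\zeta = \infty) > 0$.
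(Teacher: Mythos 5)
Your first inclusion $\{\zeta<\infty\}\subseteq\{W_\infty=0\}$ and your Borel--Cantelli argument for $\lambda_*<0$ are correct; the latter is arguably more elementary than the paper's unified treatment of that case. The other two cases, however, contain a genuine gap, and it is essentially the same gap in both. In case (i) you invoke, without justification, that on $\{\zeta=\infty\}$ a ``non-vanishing fraction of particles stays inside a compact subset of $D\times V$'', so that $\prod_i q(r_i(t),\upsilon_i(t))\to 0$. Nothing in your sketch rules out the alternative that $N_t\to\infty$ while the particles concentrate near $\partial D$, where $\varphi$ vanishes and $q$ can be arbitrarily close to $1$; the product need not vanish under those circumstances. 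In case (iii) you explicitly flag the ``persistence-in-$B$'' step as the crux and leave it open, and it is not clear that it can be carried out as stated: once you have $\langle\varphi,X_t\rangle\to 0$, the bound $\inf_{B\times V}\varphi>0$ tells you only that eventually no particle is in $B$, and (H2)$^*$ by itself does not force surviving particles to keep returning to $B$.

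The paper dispenses with both difficulties through a single auxiliary result (Lemma~\ref{2ndlem}): on $\{\zeta=\infty\}$, almost surely $\langle\varphi,X_t\rangle\to\infty$. This rests on a measure-theoretic dichotomy (Lemma~\ref{1stlem}, namely $\P_{X_n}(\zeta\leq t_0)\to\mathbf{1}_{\{\zeta<\infty\}}$ a.s.) together with the pointwise survival estimate $\P_{\delta_{(r,\upsilon)}}(t_0<\zeta)\leq c_0\wedge\bigl[C\varphi(r,\upsilon)\bigr]$ with $c_0<1$, which comes from \eqref{spectralexpsgp}; taking logarithms along $\{\zeta=\infty\}$ gives the divergence. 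Once this lemma is available, $\lambda_*\leq 0$ (so including your problematic case (iii)) is immediate because $W_t\geq\mathrm{const}\cdot\langle\varphi,X_t\rangle$ and $W$ converges a.s.; and for $\lambda_*>0$ the optional-stopping estimate $\P_{\delta_{(r,\upsilon)}}(W_\infty=0)\leq 1-c_1\varphi(r,\upsilon)$, the branching property, and the divergence of $\langle\varphi,X_n\rangle$ on survival give $\limsup_n\P_{X_n}(W_\infty=0)\leq\mathbf{1}_{\{\zeta<\infty\}}$, after which Reverse Fatou closes the argument. Your fixed-point framing and the bound $q\leq 1-\varepsilon$ on compacts are sound and essentially equivalent to the paper's $q\leq 1-c_1\varphi$, but that bound only becomes effective once you know $\langle\varphi,X_t\rangle\to\infty$ on survival --- that is precisely the lemma your proposal is missing.
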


As we can see from the above theorem, the critical case requires slightly more stringent conditions than the super- or sub-critical cases. However, it we assume the conditions of the critical case across the board, we get the aesthetically more pleasing corollary below.

\begin{corollary}
For the  $(\sigma_{\emph{\texttt s}}, \pi_{\emph{\texttt s}}, \sigma_{\emph{\texttt f}}, \mathcal{P})$-NBP satisfying (H1), (H2)$^*$, (H3)$^*$ and (H4), the martingale $W$ is $L_1(\mathbb{P})$ convergent if and only if $\lambda_*>0$ and otherwise $W_\infty=0$. Irrespective of $\lambda_*$, $\{W_\infty = 0\} = \{\zeta <\infty\}$ almost surely.
\end{corollary}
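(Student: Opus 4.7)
The plan is to obtain the corollary as a direct synthesis of Theorems \ref{Kesten} and \ref{zeroset} under the unified hypothesis set (H1), (H2)$^*$, (H3)$^*$, (H4). First I would observe that (H3)$^*$, being a uniform positivity bound for $\sigma_{\texttt{f}}\pi_{\texttt{f}}$ on the set $B\times V\times V$, immediately implies the pointwise positivity statement (H3). Thus all of the assumption sets required by the three cases of Theorem \ref{Kesten} are simultaneously in force, and the proof reduces to a case analysis on the sign of $\lambda_*$.

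Next I would split according to the sign of $\lambda_*$. If $\lambda_*>0$, Theorem \ref{Kesten}(i) gives directly that $W$ is $L_1(\mathbb{P})$ convergent. If $\lambda_*\leq 0$, Theorem \ref{Kesten}(ii)--(iii) gives $W_\infty=0$ almost surely; since $W$ has constant unit mean by the martingale property defined in \eqref{mgdef}, almost sure convergence to zero precludes $L_1(\mathbb{P})$ convergence, which supplies the converse direction of the equivalence. This establishes the \emph{iff} statement together with the dichotomy that $W_\infty=0$ in all non-supercritical cases.

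Finally, the identification $\{W_\infty=0\}=\{\zeta<\infty\}$ almost surely, irrespective of the sign of $\lambda_*$, is an immediate consequence of Theorem \ref{zeroset}, which applies in each of the three cases of Theorem \ref{Kesten} under the hypotheses we have just verified. The only point requiring any care is that the critical case $\lambda_*=0$ in Theorem \ref{Kesten} requires (H3)$^*$ rather than just (H3); since (H3)$^*$ is our blanket assumption, this is automatic. In this sense there is no genuine obstacle: the corollary is really a clean restatement of the preceding two theorems once the hypothesis has been strengthened to (H3)$^*$ uniformly across sub-, super-, and critical regimes.
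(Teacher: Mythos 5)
Your proof is correct and takes the same approach the paper intends: the paper gives no explicit proof of this corollary, presenting it as an immediate synthesis of Theorems \ref{Kesten} and \ref{zeroset} once (H3)$^*$ is imposed uniformly, which is precisely what you do. In particular, your two key observations—that (H3)$^*$ implies (H3) so every case of the two theorems is available, and that $W_\infty=0$ almost surely is incompatible with $L_1(\mathbb{P})$ convergence of a unit-mean martingale—supply the only arguments that need to be made.
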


Note that, unlike many spatial branching process (e.g. the classical result of \cite{Biggins}), there is no `$xlog x$' condition thanks to the assumption (H2) and a precise dichotomy on $\lambda_*$ emerges.
The result mimics a behavioural trait that has been observed for branching diffusions in compact domains in e.g. \cite{EK}.   In essence it states that in the competing physical processes of fission, transport, scattering and absorption, it is the lead eigenvalue which dictates growth or decay of mass. 
In this respect we can also mimic other similar results in the spatial branching process literature  (cf. \cite{Ik, HH, KM}), the proof of which falls out of the proof of Theorem \ref{Kesten}.
{\color{black}
\begin{corollary}\label{L2}
For the  $(\sigma_{\emph{\texttt s}}, \pi_{\emph{\texttt s}}, \sigma_{\emph{\texttt f}}, \mathcal{P})$-NBP satisfying the assumptions (H1), (H2)$^*$, (H3) and (H4), when $\lambda_*>0$, the martingale $(W_t, t\geq 0)$ is $L_2(\mathbb{P})$ convergent.
\end{corollary}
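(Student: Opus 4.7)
Since $W$ is a non-negative martingale with $\mathbb{E}_\mu[W_t]=1$, $L_2(\mathbb{P})$-convergence will follow from the martingale convergence theorem once we establish uniform $L_2$-boundedness, $\sup_{t\ge 0}\mathbb{E}_\mu[W_t^2]<\infty$. The branching property reduces this to the case $\mu=\delta_{(r,\upsilon)}$, for which it suffices to show
\begin{equation*}
\sup_{t\ge 0}\, e^{-2\lambda_* t}\,\mathbb{E}_{\delta_{(r,\upsilon)}}\!\bigl[\langle \varphi, X_t\rangle^2\bigr]<\infty, \qquad (r,\upsilon)\in D\times V,
\end{equation*}
since the cross-terms arising from independent initial atoms contribute only $\langle \psi_t[\varphi],\mu\rangle^2 = e^{2\lambda_* t}\langle\varphi,\mu\rangle^2$.

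The strategy is to develop a \emph{many-to-two} identity for the NBP and then feed in the Perron--Fr\"obenius estimates of Theorem \ref{CVtheorem}. Conditioning on the first event in the life of the initial neutron---scatter, fission, or boundary exit---and applying the Markov branching property yields, for bounded measurable $g\ge 0$, an identity of the form
\begin{equation*}
\mathbb{E}_{\delta_{(r,\upsilon)}}\!\bigl[\langle g, X_t\rangle^2\bigr] = \psi_t[g^2](r,\upsilon) + \int_0^t \psi_s\!\bigl[\mathcal{Q}[\psi_{t-s}[g]]\bigr]\!(r,\upsilon)\,\d s,
\end{equation*}
where $\mathcal{Q}[f]$ is a non-negative, homogeneous-of-degree-two functional accounting for the pairwise correlations among co-generated fission offspring; informally,
\begin{equation*}
\mathcal{Q}[f](r,\upsilon) = \sigma_{\texttt{f}}(r,\upsilon)\,\mathcal{E}_{(r,\upsilon)}\!\bigl[\langle f,\mathcal{Z}\rangle^2 - \langle f^2,\mathcal{Z}\rangle\bigr] \;\ge\; 0.
\end{equation*}
Uniqueness of the bounded-in-time solution to this mild equation is handled by the same Gr\"onwall iteration as in Lemma \ref{NBPrep}.

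Setting $g=\varphi$ and invoking the right-eigenfunction relation $\psi_{t-s}[\varphi]=e^{\lambda_*(t-s)}\varphi$ from \eqref{leftandright}, the quadratic homogeneity of $\mathcal{Q}$ yields $\mathcal{Q}[\psi_{t-s}[\varphi]] = e^{2\lambda_*(t-s)}\mathcal{Q}[\varphi]$. Hypotheses (H1) and (H4) bound $\sigma_{\texttt{f}}$ and the second factorial moment of the fission offspring count respectively, and since $\varphi\in L^+_\infty(D\times V)$ by Theorem \ref{CVtheorem}, it follows that $\mathcal{Q}[\varphi]\in L^+_\infty(D\times V)$. The uniform spectral estimate \eqref{spectralexpsgp} then gives a constant $C_1$ with $\psi_s[\mathcal{Q}[\varphi]](r,\upsilon)\le C_1\,e^{\lambda_* s}\varphi(r,\upsilon)$, while $\psi_t[\varphi^2]\le \|\varphi\|_\infty\psi_t[\varphi]=\|\varphi\|_\infty e^{\lambda_* t}\varphi$ controls the first term. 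Since $\lambda_*>0$,
\begin{equation*}
\mathbb{E}_{\delta_{(r,\upsilon)}}\!\bigl[\langle\varphi,X_t\rangle^2\bigr]\le \|\varphi\|_\infty e^{\lambda_* t}\varphi(r,\upsilon) + C_1\varphi(r,\upsilon)\int_0^t e^{\lambda_* s}e^{2\lambda_*(t-s)}\,\d s = O\!\bigl(e^{2\lambda_* t}\bigr),
\end{equation*}
uniformly in $(r,\upsilon)\in D\times V$, yielding the required $L_2$-bound on $W$.

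I expect the principal technical obstacle to be the rigorous derivation of the many-to-two identity and the verification that $\mathcal{Q}[\varphi]\in L^+_\infty(D\times V)$; once that is in place, Theorem \ref{CVtheorem} does essentially all of the remaining work, with the role of $\lambda_*>0$ being precisely to ensure that $\int_0^t e^{-\lambda_* s}\,\d s$ stays bounded uniformly in $t\ge 0$.
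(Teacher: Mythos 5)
Your proposal is correct, but it takes a genuinely different route from the paper. The paper's proof is very short: it observes that $\mathbb{E}_\mu[W_t^2]=\mathbb{E}^\varphi_\mu[W_t]$ (the change of measure \eqref{mgCOM} converts a second moment under $\mathbb{P}$ into a first moment under $\mathbb{P}^\varphi$), recalls that $\limsup_{t\to\infty}\mathbb{E}^\varphi_\mu[W_t]<\infty$ was already established inside the proof of Theorem \ref{Kesten}(i) via the spine decomposition (see the argument around \eqref{EIS}--\eqref{useinnextpaper}), and then wraps up with Doob's $L^2$ maximal inequality. Your route instead derives a many-to-two mild equation for $\mathbb{E}_{\delta_{(r,\upsilon)}}[\langle g,X_t\rangle^2]$, specialises to $g=\varphi$, and feeds the Perron--Fr\"obenius estimate \eqref{spectralexpsgp} plus the eigenvalue relation \eqref{leftandright} directly into the variance kernel $\mathcal{Q}[\varphi]$. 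Both are standard devices in the spatial branching literature; the paper's proof buys brevity and re-uses the spine computation, whereas yours is more self-contained and makes the quadratic structure fully explicit, at the cost of having to justify the many-to-two identity (a routine but tedious conditioning/Gr\"onwall argument mirroring Lemma \ref{NBPrep}). Two small points worth tightening: the cross-term for a general $\mu=\sum_i\delta_{(r_i,\upsilon_i)}$ is $e^{2\lambda_* t}\bigl(\langle\varphi,\mu\rangle^2-\sum_i\varphi(r_i,\upsilon_i)^2\bigr)$, not $e^{2\lambda_* t}\langle\varphi,\mu\rangle^2$ (harmless, but worth stating correctly); and your final uniform bound gives $\mathbb{E}_{\delta_{(r,\upsilon)}}[W_t^2]=O(1/\varphi(r,\upsilon))$, so the $L_2$ bound is locally but not globally uniform in the starting configuration, which is all that is needed but is worth flagging since $\varphi$ vanishes on $\partial D$.
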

}

It is particularly interesting to note that in the setting of a critical system, $\lambda_* = 0$, which is typically what is envisaged for a nuclear reactor, the above results evidences the hypothesis that the fission process eventually dies out (similarly to other examples of critical branching processes).

\smallskip
To verify the aforementioned hypothesis rigorously, one needs an almost sure growth result for the particle system which would take the format 
\begin{equation}
\lim_{t\to\infty}{\rm e}^{-\lambda_* t} \frac{\langle g, X_t \rangle}{\langle \varphi, \mu\rangle} = \langle g, \tilde\varphi\rangle W_\infty,
\label{SLLN}
\end{equation}
$\mathbb{P}_\mu$-almost surely, for all $g\in L^+_\infty(D\times V)$. This is a much more difficult result than the one stated in Theorem \ref{Kesten} and is addressed in a second instalment to this paper; see \cite{SNTE-II}. The reader should note that \eqref{SLLN} verifies what has been known in the nuclear industry for a long time. Namely that critical nuclear reactors will not persist in energy generation, but will eventually cease working, corresponding to the case that $W_\infty = 0$.

\section{Neutron random walk and spine decomposition} As with many spatial branching processes, the most efficient way to analyse martingale convergence is through the pathwise behaviour of the particle system (known as a {\it spine decomposition}) when considered under a change of measure induced by the martingale itself. 
 Whilst classical in the branching process literature, it is unknown in the setting of neutron transport.
 We will devote the remainder of this section to describing the pathwise spine decomposition of the physical process, our final main contribution. 
 
\smallskip

We are  interested in the change of measure 
\begin{equation}
\left.\frac{\d \mathbb{P}^\varphi_{\mu}}{\d\mathbb{P}_{\mu}}\right|_{\mathcal{F}_t}  =
 W_t, \qquad t\geq 0,
\label{mgCOM}
\end{equation}
for the NBP with  characteristics  $\sigma_{\texttt s}, \pi_{\texttt s}, \sigma_{\texttt f}, \mathcal{P}$ (cf. Remark \ref{nonuniqueNBP}), where $\mu$ belongs to the space of finite atomic measures $\mathcal{M}(D\times V)$.
\smallskip

In the next theorem we will formalise an understanding of this change of measure in terms of another $\mathcal{M}(D\times V)$-valued stochastic process 
 \begin{equation}
 X^\varphi := (X^\varphi_t, t\geq 0)\text{ with probabilities }\tilde{\mathbb{P}}^\varphi: = (\tilde{\mathbb{P}}^\varphi_{\mu}, \mu \in \mathcal{M}(D\times V)),
 \label{dressedprocess}
 \end{equation}
 which we will now describe through an algorithmic construction. 
 \begin{itemize}
\item[1.] From the initial configuration $\mu\in \mathcal{M}(D\times V)$ with an arbitrary enumeration of particles, the $i$-th neutron is selected and marked `{\it spine}' with empirical probability 
\[
\frac{\varphi(r_i, \upsilon_i)}{\langle\varphi, \mu\rangle}.
\]
\item[2.] The neutrons $j\neq i$ in the initial configuration that are not marked `{\it spine}', each issue independent copies of $(X, \mathbb{P}_{\delta_{(r_j, \upsilon_j)}})$ respectively.
\item [3.] For the marked neutron, issue a NRW characterised by the rate function 
\[
\sigma_{\emph{\texttt s}}(r,\upsilon)\frac{\varphi(r,\upsilon')}{\varphi(r,\upsilon)}\pi_{\emph{\texttt s}}(r,\upsilon,\upsilon'), \qquad r\in D, \upsilon,\upsilon'
\in V.
\]

\item[4.] The marked neutron undergoes fission at the accelerated rate  
$\varphi(r,\upsilon)^{-1}( \bF +\sigma_{\texttt f}{\texttt I})\varphi(r,\upsilon)$, when  in physical configuration $(r,\upsilon)\in D\times V$, at which point, it scatters a random number of particles according to the random measure on $V$ given by $(\mathcal{Z}, {\mathcal P}^\varphi_{(r,\upsilon)})$ where 
\begin{equation}
\frac{\d{\mathcal P}^\varphi_{(r,\upsilon)}}{\d {\mathcal P}_{(r,\upsilon)}} = \frac{\langle\varphi, \mathcal{Z}\rangle}{{\mathcal E}_{(r,\upsilon)}[\langle\varphi, \mathcal{Z}\rangle]}.
\label{pointprocessCOM}
\end{equation}
\item[5.] When fission of the marked neutron occurs in physical configuration $(r,\upsilon)\in D\times V$, set
\[
\mu = \sum_{i = 1}^n\delta_{(r,\upsilon_i)},\text{ where, in the previous step, }\mathcal{Z} = \sum_{i = 1}^n\delta_{\upsilon_i},
\]
and repeat  step 1.
\end{itemize}

 The process $X^\varphi_t$ describes the physical configuration (position and velocity) of all the particles in the system at time $t$, for $t\geq 0$ (i.e. ignoring the marked genealogy). We will also be interested in the configuration of the single genealogical line of descent which has been marked `{\it spine}'. This process, referred to simply as {\it the spine}, will be denoted by $(R^\varphi, \Upsilon^\varphi): = ((R^\varphi_t, \Upsilon^\varphi_t), t\geq 0)$. Together, the processes $(X^\varphi, (R^\varphi,\Upsilon^\varphi))$ make a Markov pair, whose probabilities we will denote by $(\tilde{\mathbb{P}}^\varphi_{\mu, (r,\upsilon)}, \mu \in \mathcal{M}(D\times V), (r,\upsilon)\in V\times D)$. Note in particular that 
 \[
 \tilde{\mathbb{P}}^\varphi_{\mu} = \sum_{i = 1}^n\frac{\varphi (r_i,\upsilon_i)}{\langle\varphi, \mu\rangle}\tilde{\mathbb{P}}^\varphi_{\mu, (r_i,\upsilon_i)} 
 \]
 when $\mu = \textstyle{\sum_{i =1}^n\delta_{(r_i, \upsilon_i)}}$.
 
\begin{theorem}\label{pathspine} 
Under assumptions (H1), (H2) and (H4), the process $(X^\varphi, \tilde{\mathbb{P}}^\varphi)$ is Markovian and equal in law to $(X, \mathbb{P}^\varphi)$, where  $ \mathbb{P}^\varphi = (\mathbb{P}^\varphi_\mu, \mu\in \mathcal{M}(D\times V))$. 

\end{theorem}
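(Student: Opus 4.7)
The plan is to identify the laws of $(X^\varphi, \tilde{\mathbb{P}}^\varphi)$ and $(X,\mathbb{P}^\varphi)$ by matching their expectations on the multiplicative test functionals $F_h(\mu) = \prod_{i} h(r_i, \upsilon_i)$, defined for $h: D\times V \to [0,1]$ measurable and $\mu = \sum_i\delta_{(r_i,\upsilon_i)}$ (with the convention $F_h(0) = 1$). Since such functionals are law-determining by a monotone class argument, the heart of the proof reduces to showing that, for every $\mu\in\mathcal{M}(D\times V)$ and every $t \ge 0$,
\begin{equation}\label{spineplanid}
\tilde{\mathbb{E}}^\varphi_\mu\bigl[F_h(X^\varphi_t)\bigr]
\;=\;
\mathbb{E}_\mu\bigl[W_t\, F_h(X_t)\bigr]
\;=\;
\frac{{\rm e}^{-\lambda_* t}}{\langle\varphi,\mu\rangle}\,\mathbb{E}_\mu\bigl[F_h(X_t)\langle \varphi, X_t\rangle\bigr].
\end{equation}
The Markov property of $(X^\varphi,\tilde{\mathbb{P}}^\varphi)$ is plain from the construction, since each of steps 1--5 depends only on the present state: the enlarged pair $(X^\varphi,(R^\varphi,\Upsilon^\varphi))$ is Markovian, and \eqref{spineplanid} will guarantee that the conditional law of the spine given $X^\varphi_t$ is still the $\varphi$-weighted empirical distribution of the particles alive at time $t$, so integrating out the spine preserves the Markov property at the level of $X^\varphi$ alone.

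To handle the middle expression in \eqref{spineplanid}, I would first reduce to the case $\mu = \delta_{(r,\upsilon)}$ by the Markov branching property of $(X,\mathbb{P})$, and then derive a recursive integral equation for
\[
G_t(r,\upsilon) = \frac{{\rm e}^{-\lambda_* t}}{\varphi(r,\upsilon)}\mathbb{E}_{\delta_{(r,\upsilon)}}\bigl[F_h(X_t)\langle\varphi,X_t\rangle\bigr]
\]
by conditioning on the first scatter or fission event of the ancestor. At a fission event, dividing and multiplying by $\mathcal{E}_{(r,\upsilon)}[\langle \varphi, \mathcal{Z}\rangle]$ and using the invariance $\psi_t[\varphi] = {\rm e}^{\lambda_* t}\varphi$ from Theorem~\ref{CVtheorem} produces precisely the accelerated fission rate $\varphi(r,\upsilon)^{-1}(\bF + \sigma_{\texttt{f}}\,\mathrm{Id})\varphi(r,\upsilon)$ of step 4 and the size-biased offspring law $\mathcal{P}^\varphi_{(r,\upsilon)}$ of \eqref{pointprocessCOM}, with one of the offspring (the new spine) carrying the factor $G_{t-s}$ and the remaining offspring giving rise to independent subtrees evolving under $\mathbb{P}$. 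The same manipulation on a scatter event transforms the kernel $\sigma_{\texttt s}\pi_{\texttt s}$ into the $\varphi$-Doob transform prescribed in step 3. An entirely parallel conditioning on the first event of the spine in the algorithmic construction yields the same integral equation for $\tilde G_t(r,\upsilon) := \tilde{\mathbb{E}}^\varphi_{\delta_{(r,\upsilon)}}[F_h(X^\varphi_t)]$.

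A Gr\"onwall uniqueness argument for this integral equation, essentially identical in structure to the one used for the mild equation in Lemma~\ref{NBPrep}, then yields $G_t = \tilde G_t$, using (H1) together with the uniform bound on $\varphi$ and its lower bound on compacts from Theorem~\ref{CVtheorem} to control the accelerated fission rate, and (H4) to keep the $\varphi$-biased first moment of the offspring distribution uniformly bounded. The single-particle identity propagates to general $\mu$ via the branching property of $(X,\mathbb{P})$ and the $\varphi$-biased selection of the initial spine in step 1; combined with the Markov property already established this yields equality in law as processes.

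The main obstacle is the bookkeeping in the above step: one must verify that the three ingredients of the algorithmic construction --- the $\varphi$-biased initial choice of spine, the $\varphi$-Doob-transformed scatter dynamics, and the rate-accelerated $\varphi$-size-biased fission distribution --- combine exactly to reproduce $W_t\,{\rm d}\mathbb{P}_\mu$. The cleanest way to see this is to observe that $W_t$ admits a multiplicative factorisation at every event along the spine, with one factor attaching to the new spine-subtree and the remaining factors being unit-mean martingales of the form \eqref{mgdef} associated with the non-spine subtrees, and then to proceed by induction on the number of events along the spine up to time $t$. Assumption (H4) is used to ensure that this event count is almost surely finite on any bounded time horizon, so no explosion issues arise in the recursion.
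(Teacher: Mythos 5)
Your plan for matching the one-dimensional marginals --- conditioning on the first scatter or fission event of the initial ancestor, identifying the $\varphi$-Doob transform of the scatter kernel and the size-biased, rate-accelerated fission law via the martingale invariance of $\varphi$, and closing with a Gr\"onwall uniqueness argument --- is precisely the paper's strategy (Steps~1 and~2 of its proof of Theorem~\ref{pathspine}). The paper's $u^\varphi_t[g]$ and $\tilde u^\varphi_t[g]$ play the roles of your $G_t$ and $\tilde G_t$, both are shown to solve \eqref{spinesg}, and uniqueness follows by multiplying through by $\varphi$ to obtain \eqref{uniquesg} and applying Gr\"onwall under (H1) and (H4). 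Your remark about the multiplicative factorisation of $W$ along the spine is the standard informal picture that underlies these computations.

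There is, however, a genuine gap in how you establish that $X^\varphi$ \emph{alone} is Markovian under $\tilde{\mathbb{P}}^\varphi$, and hence that the two processes agree in law as \emph{processes} rather than merely at each fixed time. You assert that the marginal identity $\tilde{\mathbb{E}}^\varphi_\mu[F_h(X^\varphi_t)] = \mathbb{E}_\mu[W_t F_h(X_t)]$ ``will guarantee'' that the conditional law of the spine given $X^\varphi_t$ is the $\varphi$-weighted empirical distribution. That implication does not hold: the marginal law of $X^\varphi_t$ carries no information about the conditional distribution of the spine coordinate $(R^\varphi_t,\Upsilon^\varphi_t)$ given $X^\varphi_t$, which is a statement about the joint law of the pair. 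The identity \eqref{empirical} requires its own proof, and the paper (Step~3) supplies one by reducing \eqref{checkthis} to \eqref{finalbit} and observing that both sides of \eqref{finalbit} solve the integral equation \eqref{spinesg} with the free term $\hat{\texttt U}_t[g]$ replaced by $\texttt U_t[fg]$, so that the same uniqueness argument applies a second time. Without this additional step you cannot integrate out the spine and retain Markovianity at the level of $X^\varphi$, and matching one-dimensional marginals alone does not close the proof.
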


It is also worth understanding the dynamics of the spine $(R^\varphi, \Upsilon^\varphi)$. For convenience, let us denote the family of probabilities of the latter by $\tilde{\mathbf{P}}^\varphi = (\tilde{\mathbf{P}}^\varphi_{(r,\upsilon)}, (r,\upsilon)\in D\times V)$, in other words, the marginals of  $(\tilde{\mathbb{P}}^\varphi_{\mu, (r,\upsilon)}, \mu \in \mathcal{M}(D\times V), (r,\upsilon)\in V\times D)$. 

\smallskip

\smallskip

Next we define the probabilities 
 $\mathbf{P}^\varphi  : =(\mathbf{P}^\varphi _{(r,\upsilon)}, (r,  \upsilon)\in  D\times V)$ to describe the law of an $\alpha^\varphi \pi^\varphi$-NRW, where
\begin{equation}
\alpha^\varphi(r,\upsilon)\pi^\varphi(r,\upsilon,\upsilon') = \frac{\varphi(r,\upsilon')}{\varphi(r,\upsilon)}\left(
\sigma_{\texttt{s}}(r,\upsilon)\pi_{\texttt{s}}(r, \upsilon, \upsilon') + \sigma_{\texttt{f}}(r,\upsilon) \pi_{\texttt{f}}(r, \upsilon, \upsilon')
\right),
\label{spineap}
\end{equation}
for $r\in D$, $\upsilon,\upsilon'\in V$
 We are now ready to identify the spine.

 \begin{lemma}\label{spinemarkov}Under assumptions (H1), (H2) and (H4), the process $((R^\varphi, \Upsilon^\varphi), \tilde{\mathbf{P}}^\varphi )$ is a NRW equal in law to $((R, \Upsilon), \mathbf{P}^\varphi )$ and, moreover, 
  \begin{equation}
\left.\frac{\d \mathbf{P}^\varphi_{(r,\upsilon)}}{\d \mathbf{P}_{(r,\upsilon)}}\right|_{\mathcal{F}_t} =
{\rm e}^{-\lambda_* t +\int_0^t \beta(R_s, \Upsilon_s)\d s}\frac{\varphi(R_t, \Upsilon_t)}{\varphi (r,\upsilon)}\mathbf{1}_{\{t<\tau^D\}},\qquad t\geq 0,r\in D,\upsilon\in V,
\label{NRWCOM}
\end{equation}
from which we deduce that $((R, \Upsilon), \mathbf{P}^\varphi )$ is conservative with a stationary distribution $\varphi\tilde\varphi(r,\upsilon)\d r\d\upsilon$ on $D\times V$.
(Recall that $(R,\Upsilon)$ under $\mathbf{P}$ is the $\alpha\pi$-NRW that appears in the many-to-one Lemma \ref{NRWrep}.)
\end{lemma}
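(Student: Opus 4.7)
The argument decomposes into three parts: identifying the jump rates of the spine from the algorithmic construction, verifying the martingale change of measure \eqref{NRWCOM}, and deducing conservativeness and invariance of $\varphi\tilde\varphi(r,\upsilon)\d r\d\upsilon$.

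For the first part, I read the transition rates of $(R^\varphi,\Upsilon^\varphi)$ off steps 3--5 of the construction in Section 6. Between events the spine travels in a straight line, and two mechanisms contribute to velocity changes. Step 3 gives a direct scatter rate $\sigma_{\texttt s}(r,\upsilon)\varphi(r,\upsilon')\pi_{\texttt s}(r,\upsilon,\upsilon')/\varphi(r,\upsilon)$ for a jump from $(r,\upsilon)$ into $\d\upsilon'$. Step 4 triggers fission of the spine at total rate $\varphi(r,\upsilon)^{-1}\sigma_{\texttt f}(r,\upsilon)\int_V\varphi(r,\upsilon'')\pi_{\texttt f}(r,\upsilon,\upsilon'')\d\upsilon''$, after which step 5 selects the new spine from the offspring with probability proportional to $\varphi$. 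Combining the change of measure \eqref{pointprocessCOM} with \eqref{Erv} yields
\[
\mathcal{E}^\varphi_{(r,\upsilon)}\!\left[\sum_{i=1}^{N}\frac{\varphi(r,\upsilon_i)}{\langle\varphi,\mathcal{Z}\rangle}\delta_{\upsilon_i}(\d\upsilon')\right] = \frac{\varphi(r,\upsilon')\pi_{\texttt f}(r,\upsilon,\upsilon')}{\int_V\varphi(r,\upsilon'')\pi_{\texttt f}(r,\upsilon,\upsilon'')\d\upsilon''}\d\upsilon',
\]
so that multiplication by the fission rate cancels the $\int\varphi\pi_{\texttt f}$ factors and leaves the effective fission-induced jump rate $\sigma_{\texttt f}(r,\upsilon)\varphi(r,\upsilon')\pi_{\texttt f}(r,\upsilon,\upsilon')/\varphi(r,\upsilon)$. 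Summing the two contributions reproduces exactly $\alpha^\varphi(r,\upsilon)\pi^\varphi(r,\upsilon,\upsilon')$ in \eqref{spineap}, so that $(R^\varphi,\Upsilon^\varphi)$ under $\tilde{\mathbf P}^\varphi$ coincides in law with the $\alpha^\varphi\pi^\varphi$-NRW, i.e.\ $(R,\Upsilon)$ under $\mathbf P^\varphi$.

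Next, let $M_t$ denote the right-hand side of \eqref{NRWCOM}. Applying Lemma \ref{NRWrep} with $g=\varphi$ and using $\psi_t[\varphi]=\mathrm{e}^{\lambda_* t}\varphi$ from Theorem \ref{CVtheorem} gives $\mathbf E_{(r,\upsilon)}[M_t]=1$, and the Markov property of $(R,\Upsilon)$ together with the semigroup property upgrades this to the $(\mathcal F_t)$-martingale property, defining a consistent measure. To verify that this measure coincides with $\mathbf P^\varphi$, one more application of Lemma \ref{NRWrep} gives, for bounded measurable $f$,
\[
\mathbf E_{(r,\upsilon)}\!\left[M_t f(R_t,\Upsilon_t)\right] = \frac{\mathrm{e}^{-\lambda_* t}}{\varphi(r,\upsilon)}\psi_t[\varphi f](r,\upsilon).
\]
Differentiating at $t=0$ and invoking the infinitesimal eigenrelation $(\mathcal L+\beta)\varphi=\lambda_*\varphi$, where $\mathcal L$ is the generator of the $\alpha\pi$-NRW killed on exit from $D$, a Doob $h$-transform calculation shows that the generator of the new process is $\upsilon\cdot\nabla_r f + \int_V(f(r,\upsilon')-f(r,\upsilon))\alpha^\varphi\pi^\varphi\d\upsilon'$, which uniquely characterises the $\alpha^\varphi\pi^\varphi$-NRW.

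For conservativeness, the factor $\mathbf 1_{\{t<\tau^D\}}$ in $M_t$ forces $\mathbf P^\varphi_{(r,\upsilon)}(\tau^D\leq t)=\mathbf E_{(r,\upsilon)}[M_t\mathbf 1_{\{\tau^D\leq t\}}]=0$ for every $t\geq 0$. To see that $\varphi\tilde\varphi(r,\upsilon)\d r\d\upsilon$ is stationary, I integrate the displayed identity against this measure and apply the left-eigenmeasure relation $\langle\tilde\varphi,\psi_t[\varphi f]\rangle = \mathrm{e}^{\lambda_* t}\langle\tilde\varphi,\varphi f\rangle$ from Theorem \ref{CVtheorem}; the exponentials cancel and the right-hand side collapses to $\langle \varphi\tilde\varphi, f\rangle$, giving invariance. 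The main technical hurdle is the rate-matching step: keeping track of the size-biasing in $\mathcal{P}^\varphi$ against the spine-selection probability $\varphi/\langle\varphi,\cdot\rangle$ so that the $\langle\varphi,\mathcal{Z}\rangle$ factors cancel cleanly to produce \eqref{spineap}. Once this is in hand, the martingale identification and the stationarity deduction are each essentially one-line consequences of Lemma \ref{NRWrep} and Theorem \ref{CVtheorem}.
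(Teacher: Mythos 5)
Your identification of the spine's jump rates from the algorithmic construction (steps 3--5), and the size-biasing cancellation that reduces the combined scatter-plus-fission rate to $\alpha^\varphi\pi^\varphi$, matches the paper's first paragraph closely. The stationarity argument — integrating $\psi^\varphi_t[f] = \mathrm{e}^{-\lambda_* t}\varphi^{-1}\psi_t[\varphi f]$ against $\tilde\varphi\varphi$ and invoking the left-eigenmeasure relation — is actually a cleaner direct verification than the paper's, which instead lets $t\to\infty$ in \eqref{spectralexpsgp}. Both deduce conservativeness from $\psi^\varphi_t[1]\equiv 1$ (the paper via uniqueness of the mild equation, you via the indicator in $M_t$, which amounts to the same thing once the martingale property of $M$ is established).

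The gap is in your middle step. Writing ``$(\mathcal L+\beta)\varphi = \lambda_*\varphi$'' and ``differentiating at $t=0$'' to read off the $h$-transformed generator presumes a classical, pointwise eigenrelation for $\varphi$ in the domain of the full generator $\mathcal L = \upsilon\cdot\nabla + \bS'$. But the entire setup of the paper is built to avoid exactly this: Theorem \ref{CVtheorem} only produces $\varphi \in L^+_\infty(D\times V)$ satisfying the \emph{mild} eigenrelation $\psi_t[\varphi] = \mathrm{e}^{\lambda_* t}\varphi$, and the transport term $\upsilon\cdot\nabla$ is precisely the reason the NTE is treated in Duhamel form (see the discussion below \eqref{BC}). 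No regularity of $\varphi$ along trajectories is available to justify the differentiation, nor has uniqueness for the associated martingale/generator problem been established. The paper sidesteps this by conditioning the expectation $\psi^\varphi_t[g]$ on the first scatter event, applying \eqref{obs1} and the Dynkin potential lemma to convert additive into multiplicative functionals, carrying out the pointwise cancellation $\alpha^\varphi - \varphi^{-1}(\bS+\bF)\varphi - \beta + \alpha = 0$ (a bookkeeping identity, not an eigenequation), and concluding with the mild equation $\psi^\varphi_t[g] = \U_t[g] + \int_0^t\U_s[\bL_\varphi\psi^\varphi_{t-s}[g]]\,\d s$, whose unique bounded solution (Gr\"onwall) is the $\alpha^\varphi\pi^\varphi$-NRW semigroup. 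You should replace the generator calculation with this mild-equation argument; the rest of the outline then goes through.
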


\smallskip

Now that we have stated all of our main results, it is worth noting that, in places, the analysis echoes very similar issues that have very recently appeared in the analysis of growth-fragmentation equations, see e.g. \cite{BW} and \cite{Bnew}, and for good reason. Growth-fragmentation equations, although dealing with a particle system in which particles' mass is positive-valued and for which there is no consideration of classical `velocity', the dynamics of fragmentation  shares the phenomenon of non-local branching. This explains the appearance of  integral operators. Moreover, a combination of L\'evy-type and piecewise linear movement of particles in the growth-fragmentation setting also mirrors the phenomenon of advection and scattering in the NTE and the associated operators.

\smallskip

In the rest of the paper we prove Theorem \ref{CVtheorem}, Theorem \ref{pathspine}, Lemma \ref{spinemarkov}, Theorem \ref{Kesten} and Corollary \ref{L2} in that order.

\section{Proof of Theorem \ref{CVtheorem}}\label{pf7.1}

Our approach to  proving Theorem \ref{CVtheorem} will be to extract the existence of the eigentriple $\lambda_*$, $\varphi$ and  $\tilde\varphi$ for the expectation semigroup $(\psi_t, t\geq0)$ from the existence of a similar triple of the semigroup $(\texttt{P}^\dagger_t, t\geq 0)$ defined in~\eqref{boldPdagger}.
Indeed, from \eqref{boldPdagger}, it is clear that when the latter exists, the eigenfunctions of the former are the same and the eigenvalues differ  only by the constant $\overline\beta$.
\smallskip

{\color{black}{\it Throughout this section, we assume the assumptions of Theorem \ref{CVtheorem} are in force.}}
\smallskip

As alluded to earlier, what lies at the  core of our proof is the  general result of  Theorem 2.1  and Proposition 2.3 of \cite{CV} and Theorem~2.1 and the discussion around (1.5) of \cite{CVecp}, which, combined in the current context,  reads as follows. 

\begin{theorem}\label{7CVtheoremBis}
Suppose that there exists a probability measure $\nu$ on $D\times V$ such that
\begin{enumerate}
\item[\namedlabel{itm:A1}{(A1)}] there exist $t_0$, $c_1 > 0$ such that for each $(r, \upsilon)\in D\times V$,
\[
\mathbf{P}_{(r, \upsilon)}((R_{t_0}, \Upsilon_{t_0}) \in \cdot \;| t_0 < \emph{\texttt{k}}) \ge c_1 \nu(\cdot);
\]
\item[\namedlabel{itm:A2}{(A2)}] there exists a constant $c_2 > 0$ such that for each $(r, \upsilon)\in D\times V$ and for every $t\ge 0$,
\[
\mathbf{P}_{\nu}(t< \emph{\texttt{k}}) \ge c_2\mathbf{P}_{(r, \upsilon)}(t <\emph{ \texttt{k}}),
\]
\end{enumerate}
where $\emph{ \texttt{k}}$ was defined in~\eqref{kill}.
Then, there exists $\lambda_c < 0$ such that, there exists an
eigenmeasure $\eta$ on $D\times V$ and a positive right eigenfunction
$\varphi$ of $\emph{\texttt{P}}^\dagger$
with eigenvalue
${\rm e}^{\lambda_c t} $, such that $\eta$ is a probability
measure and $\varphi\in L^+_\infty(D\times V) $, i.e. for all
$g\in L_{\infty}(D\times V)$
\begin{equation}
\eta [\emph{\texttt{P}}^{\dagger}_t[g] ]= {\rm e}^{\lambda_c t}\eta[g]\quad  \text{and}\quad 
 \emph{\texttt{P}}^{\dagger}_t[\varphi] = {\rm e}^{\lambda_c t}\varphi
 \quad t\ge 0.
 \label{eta}
\end{equation}
Moreover, there exist $C,\varepsilon>0$ such that, for $g\in L^+_\infty(D\times V)$ and $t$ sufficiently large (which does not depend on $g$),
\begin{equation}
\left\| {\rm e}^{-\lambda_c t}\varphi^{-1}\emph{\texttt{P}}_t^{\dagger}[g]-\eta[g]\right\|_\infty\leq C{\rm e}^{-\varepsilon t}\|g\|_\infty.
\label{7spectralexpsgp}
\end{equation}
In particular, setting $g  \equiv 1$, as $t\to\infty$,
\begin{equation}
\left\| {\rm e}^{-\lambda_c t} \varphi^{-1}\mathbf{P}_{\cdot}(t < \emph{\texttt{k}}) - 1\right\|_\infty\leq C{\rm e}^{-\varepsilon t}.
\label{7die}
\end{equation}
\end{theorem}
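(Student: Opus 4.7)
The statement is a synthesis of results already available in the literature, so my proposal is simply to describe how Theorem 2.1 and Proposition 2.3 of \cite{CV} together with Theorem 2.1 of \cite{CVecp} combine, in the generic setting of an absorbed Markov process, to yield the full conclusion. The real work of the present section will lie not here but in the verification of (A1) and (A2) for the killed $\alpha\pi$-NRW, carried out separately.

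The first step is to upgrade the one-time-step hypothesis (A1) to a contraction at all times. Hypothesis (A1) is a Doeblin-type minorization for the law of the process conditioned on survival at the fixed time $t_0$, and (A2) uniformly compares survival probabilities from an arbitrary point against survival under the reference measure $\nu$. Iterating (A1), using the Markov property of $((R,\Upsilon),\mathbf{P}^\dagger)$, and feeding in the denominator control coming from (A2), one obtains a uniform-in-starting-point contraction in total variation for the conditioned semigroup. This is precisely the content of Theorem 2.1 of \cite{CV}, which produces a unique quasi-stationary distribution $\eta$ together with uniform exponential convergence
\[
\sup_{(r,\upsilon)\in D\times V}\left\| \mathbf{P}_{(r,\upsilon)}\bigl((R_t,\Upsilon_t)\in\cdot\,\big|\,t<\texttt{k}\bigr)-\eta(\cdot)\right\|_{\mathrm{TV}} \le C{\rm e}^{-\varepsilon t}.
\]

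The second step extracts $\lambda_c$ and the left eigenmeasure identity. Because $\eta$ is quasi-stationary, the map $t\mapsto \mathbf{P}_\eta(t<\texttt{k})$ is multiplicative and hence equals ${\rm e}^{\lambda_c t}$ for some $\lambda_c\in\mathbb{R}$; boundedness of the killing rate and a strict-positivity consequence of (A1)/(A2) force $\lambda_c<0$. Testing quasi-stationarity against a bounded $g$ recasts this as $\eta[\texttt{P}^\dagger_t[g]]={\rm e}^{\lambda_c t}\eta[g]$, which is the left eigenmeasure identity in \eqref{eta}. The third step constructs the right eigenfunction by the Perron--Frobenius-like formula
\[
\varphi(r,\upsilon):=\lim_{t\to\infty}{\rm e}^{-\lambda_c t}\mathbf{P}_{(r,\upsilon)}(t<\texttt{k}),
\]
with (A1)--(A2) ensuring that the limit is uniform, bounded, and bounded away from zero; this is Proposition 2.3 of \cite{CV}. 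The eigenfunction relation $\texttt{P}^\dagger_t[\varphi]={\rm e}^{\lambda_c t}\varphi$ is then inherited from the semigroup property of $\mathbf{P}_{\cdot}(t<\texttt{k})$. Finally, the refinement to the $\varphi$-weighted uniform bound \eqref{7spectralexpsgp}, as opposed to mere total-variation or $L_1(\eta)$ convergence, is Theorem 2.1 of \cite{CVecp}, which upgrades the previous convergence by leveraging the newly constructed $\varphi$ together with (A1)--(A2). Specialising to $g\equiv 1$ then immediately yields \eqref{7die}.

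The main obstacle is not in this synthesis but in the step that follows in the body of the paper: verifying (A1) and (A2) for the NRW with killing rate $\bar\beta-\beta$ in a three-dimensional convex domain $D$ against the annular velocity space $V$. The piecewise-linear, non-diffusive trajectories force the coupling construction for (A1) to be carried out via an explicit finite sequence of scatterings that drive the NRW from any initial configuration into a prescribed compactly embedded subset of $D\times V$ with uniform lower probability, crucially using the lower bound on $\alpha\pi$ coming from (H2)$^*$ and the boundedness from (H1) to control scattering intensities. Hypothesis (A2) should then follow from a complementary geometric argument bounding ${\rm e}^{-\int_0^t(\bar\beta-\beta)\,\d s}\mathbf{1}_{\{t<\tau^D\}}$ uniformly from below in the starting configuration by its $\nu$-average.
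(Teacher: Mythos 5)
Your proposal is correct and follows the paper's own approach precisely: the paper offers no independent proof of this theorem, stating it as a direct import of Theorem 2.1 and Proposition 2.3 of \cite{CV} together with Theorem 2.1 and the discussion around (1.5) of \cite{CVecp}, with the genuine work of the section being the subsequent verification of (A1) and (A2) for the killed $\alpha\pi$-NRW, exactly as you anticipate. Your sketch of how the cited results combine (Doeblin minorization giving the conditioned ergodic convergence and $\eta$, quasi-stationarity yielding $\lambda_c$ and the left eigenmeasure identity, the Perron--Frobenius limit constructing $\varphi$, and the upgrade to the $\varphi$-weighted uniform bound via \cite{CVecp}) is consistent with what those references deliver.
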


We aim to prove that assumptions \ref{itm:A1} and \ref{itm:A2} are
satisfied, so that the conclusions of the above theorem hold. Then we
prove that $\varphi$ is uniformly bounded away from $0$ on each
compactly embedded subset of $D\times V$ and that $\eta$ admits a
positive bounded density with respect to the Lebesgue measure on
$D\times V$ (see Lemma~\ref{QSDdensity}), which concludes the proof of
Theorem~\ref{CVtheorem}.  In order to do so, we start by introducing two  
alternative assumptions to (A1) and (A2):

\smallskip

There exists an $ \varepsilon>0$ such that
\begin{itemize}
\item[\namedlabel{itm:B1}{(B1)}] $\textstyle{D_{\varepsilon} \coloneqq \{r \in D : \inf_{y\in \partial D}|r - y| > \varepsilon{\texttt v}_{\texttt{max}}\}}$ is non-empty and connected.
\item[\namedlabel{itm:B2}{(B2)}] there exist $0 < s_{\varepsilon} < t_{\varepsilon}$ and ${\color{black}\gamma} > 0$ such that, for all $r \in D \backslash D_{\varepsilon}$, there exists $K_r \subset V$ measurable such that $\text{Vol}({K_r}) \ge {\color{black}\gamma} >0$ and for all $\upsilon \in K_r$, $r + \upsilon s \in D_{\varepsilon}$ for every $s \in [s_{\varepsilon}, t_{\varepsilon}]$ and $r+\upsilon s \notin \partial D$ for all $s\in [0, s_{\varepsilon}]$.
\end{itemize}

\smallskip

It is easy to verify that \ref{itm:B1} and \ref{itm:B2} are implied when we assume that $D$ is a non-empty  convex domain, as we have done in the introduction. They are also satisfied if, for example, the boundary of $D$ is a smooth, connected, compact manifold and $\varepsilon$ is sufficiently small. Geometrically, \ref{itm:B2} means that each of the sets
\begin{equation}
L_r \coloneqq \left\{z \in \mathbb{R}^3 : \frac{\Vert z-r \Vert}{\Vert \upsilon \Vert} \in [s_{\varepsilon}, t_{\varepsilon}], \upsilon \in K_r \right\},\qquad r\in D\backslash D_\varepsilon
\label{L_r}
\end{equation}
is included in $D_{\varepsilon}$ and has Lebesgue measure at least ${{\color{black}\gamma}}(t_{\varepsilon}^2 - s_{\varepsilon}^2)/{2}$. Roughly speaking, for each $r\in D$ which is within $\varepsilon {\texttt v}_{\texttt{max}}$ of the boundary $\partial D$, $L_r$ is the set of points  from which one can issue a neutron with a velocity chosen from $\upsilon\in K_r$ such that (ignoring scattering and fission) we can   ensure that it passes through $D\backslash D_\varepsilon$  during the time interval $[s_\varepsilon,t_\varepsilon]$.

\smallskip
Our proof of Theorem \ref{CVtheorem} thus  consists of proving that assumptions \ref{itm:B1} and \ref{itm:B2} imply assumptions \ref{itm:A1} and \ref{itm:A2}. Our method is motivated by~\cite[Section 4.2]{CV}, however, we note that our approach accommodates for the more general setting we have here (e.g. $V \subset \mathbb{R}^3$ is bounded and $d=3$) at the cost of greater technicalities.

\smallskip

We begin by considering several technical lemmas. The first is a straightforward consequence of $D$ being a bounded subset of $\mathbb{R}^3$.
\begin{lem}\label{TB}
Let $B(r, \upsilon)$ be the ball in $\mathbb{R}^3$ centred at $r$ with radius $\upsilon$.
\begin{enumerate}
\item[(i)] There exists an integer ${\color{black}\mathfrak{n}} \ge 1$ and $r_1, \dots, r_n \in D_{\varepsilon}$ such that $D_{\varepsilon} \subset \bigcup_{i=1}^{\color{black}\mathfrak{n}}B(r_i, \emph{{\texttt v}}_{\emph{\texttt{max}}}\varepsilon/32)$ and $D_{\varepsilon} \cap B(r_i, \emph{{\texttt v}}_{\emph{\texttt{max}}}\varepsilon/32){\color{black}\neq \emptyset}$ for each $i \in \{1, \dots, {\color{black}\mathfrak{n}}\}$.
\item[(ii)] For all $r, r' \in D_{\varepsilon}$, there exists $m \le n$ and $i_1, \dots, i_m$ distinct in $\{1, \dots, {\color{black}\mathfrak{n}}\}$ such that $r\in B(r_{i_1}, \emph{{\texttt v}}_{\emph{\texttt{max}}}\varepsilon/32)$, $r'\in B(r_{i_m}, \emph{{\texttt v}}_{\emph{\texttt{max}}}\varepsilon/32)$ and for all $1 \le j \le m-1$, $B(r_{i_j}, \emph{{\texttt v}}_{\emph{\texttt{max}}}\varepsilon/32) \cap B(r_{i_{j+1}}, \emph{{\texttt v}}_{\emph{\texttt{max}}}\varepsilon/32) \neq \emptyset$.
\end{enumerate}
\end{lem}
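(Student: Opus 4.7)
The plan is to treat both parts as essentially topological arguments: part (i) is a standard open-cover/compactness argument in $\mathbb{R}^3$, while part (ii) transfers the topological connectedness of $D_\varepsilon$ (hypothesis (B1)) to graph-theoretic connectedness of the nerve formed by the covering balls. I do not anticipate any serious obstacle; the only technical point to monitor is the insistence that the centres of the covering balls lie \emph{in} $D_\varepsilon$ (rather than merely in $\overline{D_\varepsilon}$), which is what enables part (ii).

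For part (i), I would first note that since $D \subset \mathbb{R}^3$ is bounded, $\overline{D_\varepsilon}$ is compact by Heine--Borel. The family
\[
\bigl\{B(r, {\texttt v}_{\texttt{max}}\varepsilon/64) : r \in D_\varepsilon\bigr\}
\]
is an open cover of $\overline{D_\varepsilon}$: every point of $\overline{D_\varepsilon}$ lies within distance ${\texttt v}_{\texttt{max}}\varepsilon/64$ of some point of $D_\varepsilon$, since $D_\varepsilon$ is open and dense in its own closure. By compactness, I extract a finite subcover with centres $r_1,\dots,r_\mathfrak{n}\in D_\varepsilon$. Enlarging the radius from ${\texttt v}_{\texttt{max}}\varepsilon/64$ to ${\texttt v}_{\texttt{max}}\varepsilon/32$ preserves the cover, and each $r_i\in D_\varepsilon\cap B(r_i,{\texttt v}_{\texttt{max}}\varepsilon/32)$ trivially, which verifies both claims of (i).

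For part (ii), I would form the graph $\mathcal{G}$ whose vertex set is $\{1,\dots,\mathfrak{n}\}$, with an edge between $i$ and $j$ precisely when $B(r_i,{\texttt v}_{\texttt{max}}\varepsilon/32)\cap B(r_j,{\texttt v}_{\texttt{max}}\varepsilon/32)\neq\emptyset$, and prove that $\mathcal{G}$ is connected by contradiction. Suppose the vertex set admits a non-trivial partition $I\cup J$ with $I\cap J=\emptyset$ and no edge across. Then
\[
U_I=\bigcup_{i\in I}B(r_i,{\texttt v}_{\texttt{max}}\varepsilon/32),\qquad U_J=\bigcup_{j\in J}B(r_j,{\texttt v}_{\texttt{max}}\varepsilon/32),
\]
are disjoint open subsets of $\mathbb{R}^3$ whose union covers $D_\varepsilon$ by part (i). Intersecting with $D_\varepsilon$ yields a partition of $D_\varepsilon$ into two disjoint open sets, each non-empty (any $r_i\in D_\varepsilon$ with $i\in I$, respectively $J$, lies in the corresponding piece), contradicting the connectedness of $D_\varepsilon$ guaranteed by (B1). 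Hence $\mathcal{G}$ is connected.

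Finally, given $r,r'\in D_\varepsilon$, part (i) furnishes indices $i_1$ and $i_m$ with $r\in B(r_{i_1},{\texttt v}_{\texttt{max}}\varepsilon/32)$ and $r'\in B(r_{i_m},{\texttt v}_{\texttt{max}}\varepsilon/32)$. A \emph{shortest} path in $\mathcal{G}$ joining $i_1$ to $i_m$ produces a sequence $i_1,i_2,\dots,i_m$ of pairwise distinct vertices (minimality rules out any repetition) with consecutive balls intersecting, which is exactly the required chain with $m\le\mathfrak{n}$. Nothing in the argument is non-routine; the only care needed is in the quantitative choice of radii (${\texttt v}_{\texttt{max}}\varepsilon/64$ for the initial cover, ${\texttt v}_{\texttt{max}}\varepsilon/32$ for the conclusion), which is chosen with an eye to subsequent uses of the lemma in Section~\ref{pf7.1}.
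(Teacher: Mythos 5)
Your proof is correct, and since the paper offers no proof of this lemma (introducing it only with the remark that it is ``a straightforward consequence of $D$ being a bounded subset of $\mathbb{R}^3$''), your argument is exactly the standard filling-in that the authors evidently had in mind: a Heine--Borel compactness extraction for (i), followed by a nerve-graph connectedness argument driven by (B1) for (ii). The only cosmetic remark is that the intermediate radius ${\texttt v}_{\texttt{max}}\varepsilon/64$ in part (i) is unnecessary --- the family $\{B(r,{\texttt v}_{\texttt{max}}\varepsilon/32):r\in D_\varepsilon\}$ already covers $\overline{D_\varepsilon}$, so one can pass to a finite subcover directly at radius ${\texttt v}_{\texttt{max}}\varepsilon/32$ --- but this does not affect correctness. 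You also correctly identify the one substantive technical point, namely that the centres $r_i$ must be chosen inside $D_\varepsilon$ rather than merely in $\overline{D_\varepsilon}$, which is what makes both the non-emptiness claim in (i) and the non-emptiness of the two open pieces in the contradiction step of (ii) work.
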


Heuristically, the above lemma ensures that there is a universal covering of $D_\varepsilon$ by the balls  $B(r_i, {\texttt v}_{\texttt{max}}\varepsilon/32)$, $1\leq i\leq {\color{black}\mathfrak{n}}$ such that between any two points $r, r'$ in $D_\varepsilon$, there is a sequence of overlapping balls  $B(r_{i_1}, {\texttt v}_{\texttt{max}}\varepsilon/32),\cdots, B(r_{i_m}, {\texttt v}_{\texttt{max}}\varepsilon/32)$ that one may pass through in order to get from $r$ to $r'$.

\smallskip

The next lemma provides a minorization of the law of $(R_t, \Upsilon_t)$ under $\pdag$. The result is similar to~\cite[Lemma 4.5]{CV}, however, we provide a less geometrical proof by considering a change of variables from Cartesian to polar coordinates. In the statement of the lemma, we use $ \texttt{dist}(r, \partial D)$ for the distance of $r$ from the boundary $\partial D$. 
\smallskip

{\color{black}
Define $
\textstyle{
\underline{\alpha} =  \inf_{r\in D, \upsilon \in V}\alpha(r,\upsilon)>0
}$ and $\textstyle{\underline\pi = \inf_{r\in D, \upsilon, \upsilon' \in V}\pi(r,\upsilon, \upsilon')}$.
We will also similarly write $\overline{\alpha}$ and $\overline{\pi}$ with obvious meanings.
We note that due to the assumption (H1) we have $\overline\alpha<\infty$ and $\overline\pi<\infty$ and hence, combining this with (H2)$^*$ it follows that,
\[
\underline{\alpha} = \frac{1}{\overline\pi}\inf_{r\in D, \upsilon\in V} \alpha(r,\upsilon)\overline{\pi} \geq \frac{1}{\overline\pi}\inf_{r\in D, \upsilon, \upsilon'\in V} \alpha(r,\upsilon)\pi(r,\upsilon,\upsilon')>0,
\]
and a similar calculation shows that $\underline\pi>0$.
}
\begin{lem}\label{minlem}
For all $r \in D$, $\upsilon \in V$ and $t > 0$ such that ${\texttt v}_{\emph{\texttt{max}}}t < \emph{\texttt{dist}}(r, \partial D)$, the law of $(R_t, \Upsilon_t)$ under $\mathbf{P}_{(r, \upsilon)}^{\dagger}$, defined in~\eqref{boldPdagger}, satisfies
\begin{align}
\mathbf{P}_{(r, \upsilon)}^{\dagger}(R_t \in {\rm d}z, \Upsilon_t \in {\rm d}\upsilon) &\ge \frac{C{\rm e}^{-\overline{\alpha} t}}{t^2}\bigg[\frac{t}{2}\left(\emph{{\texttt v}}_{\emph{\texttt{max}}}^2 - \left(\emph{{\texttt v}}_{\emph{\texttt{min}}}\vee \frac{|z-r|}{t}\right)^2\right)\notag\\
 &\quad -|z-r|\left(\emph{{\texttt v}}_{\emph{\texttt{max}}} - \emph{{\texttt v}}_{\emph{\texttt{min}}}\vee \frac{|z-r|}{t}\right)\bigg]\mathbf{1}_{\{z\in B(r, \emph{{\texttt v}}_{\emph{\texttt{max}}}t\})}\,{\rm d}z\,{\rm d}\upsilon,
\label{minor}
\end{align}
where  $C > 0$ is a positive constant.
\end{lem}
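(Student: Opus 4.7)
My approach is to lower bound the joint density $\pdag_{(r,\upsilon)}(R_t \in dz, \Upsilon_t \in d\upsilon^*)$ by restricting to paths with \emph{exactly two} scattering events in $[0,t]$. Two scatters is the minimum number needed to produce a genuine full-dimensional absolutely continuous density on $(R_t,\Upsilon_t)\in \mathbb R^3 \times V$: a single scatter places mass only on a four-dimensional submanifold of the six-dimensional state space, while two scatters give access to all six dimensions through the triple $(s_1,s_2,\upsilon_1)$ together with the post-scatter velocity.

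Under the hypothesis $\texttt{v}_{\texttt{max}} t < \texttt{dist}(r,\partial D)$, a particle starting at $r$ cannot exit $D$ within time $t$ no matter how it scatters, so the spatial killing at $\partial D$ is inactive. The Poisson killing at rate $\bar\beta -\beta$ contributes a survival factor of at least $e^{-\bar K t}$ with $\bar K := \sup_{D\times V}(\bar\beta-\beta) <\infty$ by (H1). On the event of exactly two scatters at times $0 <s_1 < s_2 < t$ with post-scatter velocities $\upsilon_1,\upsilon^*\in V$, I bound each scattering rate and density below using (H2)$^*$ (so that $\alpha\pi \geq \underline\alpha\underline\pi > 0$ pointwise) and each inter-arrival exponential factor below by $e^{-\bar\alpha \cdot (\text{duration})}$. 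Multiplying, the joint density of $(s_1,s_2,\upsilon_1,\upsilon^*)$ is bounded below by $\underline\alpha^2\underline\pi^2\,e^{-(\bar\alpha+\bar K)t}$.

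The end position is $z = r + s_1 \upsilon + (s_2-s_1)\upsilon_1 + (t-s_2)\upsilon^*$. For fixed $(s_1,s_2,\upsilon^*)$, the map $\upsilon_1\mapsto z$ is affine with Jacobian determinant $(s_2-s_1)^3$. Changing variables from $(s_1,s_2,\upsilon_1,\upsilon^*)$ to $(s_1,s_2,z,\upsilon^*)$ yields
\[
\pdag_{(r,\upsilon)}(R_t \in dz, \Upsilon_t \in d\upsilon^*) \geq C\,e^{-(\bar\alpha+\bar K)t}\, dz\, d\upsilon^* \int\!\!\!\int_{0<s_1<s_2<t} \frac{\mathbf 1_{\{|\upsilon_1(s_1,s_2)|\in [\texttt{v}_{\texttt{min}},\texttt{v}_{\texttt{max}}]\}}}{(s_2-s_1)^3}\,ds_1\,ds_2,
\]
where $\upsilon_1(s_1,s_2) = (z - r - s_1\upsilon - (t-s_2)\upsilon^*)/(s_2-s_1)$, so that the indicator encodes the constraint that the intermediate velocity lies in the annulus $V$.

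The final step is to lower bound this integral uniformly in $(\upsilon,\upsilon^*)\in V\times V$ by the expression in the lemma. The change of variables from Cartesian $(s_1,s_2)$ to polar-type coordinates adapted to the indicator tracks the speed $s = |\upsilon_1(s_1,s_2)|$ of the intermediate velocity, together with a suitable angular/time variable; the bound then reduces to the elementary identity
\[
\int_{\texttt{v}_{\texttt{min}}\vee |z-r|/t}^{\texttt{v}_{\texttt{max}}} (ts - |z-r|)\,ds = \frac{t}{2}\Bigl(\texttt{v}_{\texttt{max}}^2 - (\texttt{v}_{\texttt{min}}\vee |z-r|/t)^2\Bigr) - |z-r|\Bigl(\texttt{v}_{\texttt{max}} - \texttt{v}_{\texttt{min}}\vee |z-r|/t\Bigr),
\]
which reproduces exactly the bracket in the lemma. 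The main obstacle is executing this change of variables carefully enough that the bound remains \emph{uniform} in $\upsilon,\upsilon^*$ and thereby in the dependence of $\upsilon_1(s_1,s_2)$ on the initial and final velocities; this forces a split into the two regimes $|z-r|/t\leq \texttt{v}_{\texttt{min}}$ and $|z-r|/t>\texttt{v}_{\texttt{min}}$, with the constraint $z\in B(r,\texttt{v}_{\texttt{max}} t)$ (beyond which no 2-scatter path can reach $z$) emerging as the non-degeneracy condition.
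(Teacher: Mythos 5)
Your overall strategy — restrict to exactly two scattering events, observe that the boundary killing is inactive when $\texttt{v}_{\texttt{max}}t < \texttt{dist}(r,\partial D)$, bound the $\bar\beta-\beta$ killing by $e^{-\bar K t}$ using (H1), and bound rates/kernels by $\underline\alpha,\underline\pi,\bar\alpha$ using (H1)-(H2)$^*$ — is sound and matches the paper's in spirit. Your dimension count (one scatter gives a $4$-dimensional image, two give full dimension) is also correct. But the critical computational step is where you diverge from the paper, and it is precisely there that your proposal stops short.

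You change variables from the intermediate velocity $\upsilon_1$ to the endpoint $z$ for fixed $(s_1,s_2,\upsilon^*)$, paying the affine Jacobian $(s_2-s_1)^3$, arriving at the $2$D integral $\int\!\!\int_{0<s_1<s_2<t}\mathbf{1}_{\{\upsilon_1(s_1,s_2)\in V\}}(s_2-s_1)^{-3}\,ds_1\,ds_2$. The paper takes a different route: it first derives (its equations LB1--LB3) a one-jump density estimate starting from a \emph{uniform} initial velocity, by fixing $(\rho_0,\rho_1,\theta_1,\varphi_1)$ and changing variables $(s,\theta_0,\varphi_0)\mapsto z$; the key inputs are the explicit Jacobian ratio bound $\Delta/\det \geq (4s^2\texttt{v}_{\texttt{max}}^3)^{-1}\geq (4t^2\texttt{v}_{\texttt{max}}^3)^{-1}$ (this is where the $t^{-2}$ prefactor comes from) and the geometric fact that the image contains $B(\rho_0 t)$. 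The two-scatter estimate is then obtained by conditioning at $J_1$ and paying a factor $\underline\pi$ to replace the post-$J_1$ velocity law with a uniform one, after which the residual $\rho_0$-integral yields exactly the bracket in \eqref{minor}. So your "elementary identity" $\int_{\texttt{v}_{\texttt{min}}\vee|z-r|/t}^{\texttt{v}_{\texttt{max}}}(ts-|z-r|)\,ds$ does match the paper's final $\rho_0$-integral, but in the paper the $t^{-2}$ factor is produced by an explicit Jacobian estimate, not by a uniform lower bound on a singular $2$D time-integral.

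The genuine gap is the step you yourself flag as "the main obstacle": you never show that $\int\!\!\int_{0<s_1<s_2<t}\mathbf{1}_{\{\upsilon_1(s_1,s_2)\in V\}}(s_2-s_1)^{-3}\,ds_1\,ds_2 \geq c\,t^{-2}\cdot[\text{bracket}]$ uniformly in $(\upsilon,\upsilon^*)\in V\times V$. This is not a routine calculation: the domain $\{\upsilon_1(s_1,s_2)\in V\}$ deforms with $\upsilon,\upsilon^*$, and the weight $(s_2-s_1)^{-3}$ is singular near the diagonal, so one must verify that the constraint $|\upsilon_1|\leq\texttt{v}_{\texttt{max}}$ always excludes a neighbourhood of the diagonal except in degenerate configurations, and that the surviving region has the right $t$-scaling in every case $(|z-r|\lessgtr\texttt{v}_{\texttt{min}}t)$. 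A quick check in special cases (e.g. $\upsilon=\upsilon^*$ aligned with $z-r$) confirms the $t^{-1}$ scaling you need and suggests the approach is viable, but the uniform argument is genuinely the hard part and remains to be supplied. Without it the proof is incomplete; with it, your route would be a legitimately different — and arguably more elementary, since the Jacobian is affine — proof of the same estimate.
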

\begin{proof}
{\color{black}Fix $r_0\in D$}.
Let $J_k$ denote the $k^{th}$ jump time of $(R_t, \Upsilon_t)$ under $\mathbf{P}_{(r, \upsilon)}^{\dagger}$ and let $\Upsilon_0$ be uniformly distributed on $V$. Assuming that ${\texttt v}_{\texttt{max}}t < \texttt{dist}({\color{black}r_0}, \partial D)$, we first give a minorization of the density of $(R_t, \Upsilon_t)$, with initial configuration $(r_0, \Upsilon_0)$, on the event $\left\{J_1 \le t < J_2\right\}$. Note that, on this event, we have 
\[
R_t =r_0+J_1\Upsilon_0 + (t-J_1)\Upsilon_{J_1},
\]
where $\Upsilon_{J_1}$ is the velocity of the process after the first jump. Then
{\color{black}
\begin{align}
\mathbf{E}^\dagger_{(r_0, \Upsilon_0)}&[f(R_t, \Upsilon_t)\mathbf{1}_{\{J_1 \le t < J_2\}}]
\notag\\
&  = \int_0^t \D s \int_V\D \upsilon_0\int_V \D \upsilon_1\alpha(r_0+\upsilon_0 s, \upsilon_0){\rm e}^{-\int_0^s\alpha(r_0+\upsilon_0 u, \upsilon_0)\D u}{\rm e}^{-\int_0^{t-s}\alpha(r_0+\upsilon_0 s + \upsilon_1u, \upsilon_1)\D u}\notag\\
&\hspace{5cm}\times \pi(r_0+\upsilon_0 s, \upsilon_0, \upsilon_1)f(r_0+\upsilon_0 s + (t-s)\upsilon_0, \upsilon_1)\notag\\
&\ge \underline{\alpha}{\rm e}^{-\overline{\alpha} t}\underline{\pi}\int_V\D \upsilon_1\int_0^t \D s \int_V{\rm d}\upsilon_0 f(r_0+s\upsilon_0 + (t-s)\upsilon_1, \upsilon_1),\label{LB1}
\end{align}
}
where we have used the bounds on $\alpha$ and $\pi$. We now make the change of variables $\upsilon_0 \mapsto (\rho_0, \theta_0, \varphi_0)$ and $\upsilon_1 \mapsto (\rho_1, \theta_1, \varphi_1)$ so that~\eqref{LB1} becomes
{\color{black}
\begin{align}
\mathbf{E}^\dagger_{(r_0, \Upsilon_0)}&[f(R_t, \Upsilon_t)\mathbf{1}_{\{J_1 \le t < J_2\}}] \notag\\
&\ge C_1 \underline{\alpha}{\rm e}^{-\overline{\alpha} t}\underline{\pi} \int_0^t \D s\int_{v_{\texttt{min}}}^{v_{\texttt{max}}}\D \rho_1 \int_0^\pi\D \varphi_1 \int_0^{2\pi}\D \theta_1 \int_{v_{\texttt{min}}}^{v_{\texttt{max}}}\D \rho_0 \int_0^\pi\D \varphi_0 \int_0^{2\pi}\D \theta_0 \label{LB2}\\
&\hspace{3cm}f(r_0+\Theta_{\rho_0, \rho_1, \theta_1, \varphi_1}(s, \theta_0, \varphi_0), \widetilde{\Theta}(\rho_1, \theta_1, \varphi_1))\Delta(\rho_0, \theta_0, \varphi_0)\Delta(\rho_1, \theta_1, \varphi_1)\notag,
\end{align}
}
where 
\begin{align}
    \Theta_{\rho_0, \rho_1, \theta_1, \varphi_1}(s, \theta_0, \varphi_0) &= \begin{bmatrix}
           s\rho_0\sin\varphi_0\cos\theta_0 + (t-s)\rho_1\sin\varphi_1\cos\theta_1 \\
           s\rho_0\sin\varphi_0\sin\theta_0 + (t-s)\rho_1\sin\varphi_1\sin\theta_1\\
           s\rho_0\cos\varphi_0 +  (t-s)\rho_1\cos\varphi_1
         \end{bmatrix},
\label{cov}
\end{align}
represents the spatial variable $s\upsilon_0 + (t-s)\upsilon_1$ in polar coordinates,
\begin{align}
	\widetilde{\Theta}(\rho_1, \theta_1, \varphi_1)&=
	\begin{bmatrix}
		\rho_1\sin\varphi_1\cos\theta_1\\
		\rho_1\sin\varphi_1\sin\theta_1\\
		\rho_1\cos\varphi_1
	\end{bmatrix}
\label{cov2}
\end{align}
represents $\upsilon_1$ in polar coordinates,
\begin{equation}
\Delta(\rho, \theta, \varphi) = \rho^2\sin\varphi,
\label{det1}
\end{equation}
is the determinant of the Jacobian matrix for the change of variables from Cartesian to polar coordinates, and $C_1$ is an unimportant normalising constant. 
\smallskip

For fixed $\rho_0$, $\rho_1$, $\theta_1$ and $\varphi_1$, we first consider the part of~\eqref{LB2} given by
{\color{black}
\begin{equation}
(s, \theta_0, \varphi_0) \mapsto \int_0^t\D s \int_0^{\pi}\D \varphi_0 \int_0^{2\pi}\D \theta_0 f(r_0+\Theta_{\rho_0, \rho_1, \theta_1, \varphi_1}(s, \theta_0, \varphi_0), \widetilde{\Theta}(\rho_1, \theta_1, \varphi_1))\Delta(\rho_0, \theta_0, \varphi_0),
\label{innera}
\end{equation}
}
The Jacobian of $\Theta_{\rho_0, \rho_1, \theta_1, \varphi_1}$, as a function of $(s, \theta_0, \varphi_0)$, is given by
\begin{equation*}
	\begin{bmatrix}
		\rho_0\cos\theta_0\sin\varphi_0 - \rho_1\cos\theta_1\sin\varphi_1 & -s\rho_0\sin\theta_0\sin\varphi_0 & s\rho_0\cos\varphi_0\cos\theta_0\\
		\rho_0\sin\theta_0\sin\varphi_0 - \rho_1\sin\theta_1\sin\varphi_1 & s\rho_0\cos\theta_0\sin\varphi_0 & s\rho_0\cos\varphi_0\sin\theta_0 \\
		\rho_0\cos\varphi_0 - \rho_1\cos\varphi_1 & 0 & -s\rho_0\sin\varphi_0
	\end{bmatrix}.
\end{equation*}
whose determinant, $\texttt{det}(D_{\rho_0, \rho_1, \theta_1, \varphi_1}(s, \theta_0, \varphi_0))$ satisfies
{\color{black}
\[
\frac{\Delta(\rho_0, \theta_0, \varphi_0)}{\texttt{det}(D_{\rho_0, \rho_1, \theta_1, \varphi_1}(s, \theta_0, \varphi_0))} \ge \frac{1}{4s^2{\texttt v}^3_{\texttt{max}}}\geq \frac{1}{4t^2{\texttt v}^3_{\texttt{max}}}, \qquad s\leq t.
\]
}
We thus have the following lower bound for~\eqref{innera}
\begin{align}
  &
    \frac{1}{4t^2{\texttt v}^3_{\texttt{max}}}\int_0^t\D s \int_0^{\pi}\D \varphi_0 \int_0^{2\pi}\D \theta_0 f(r_0+\Theta_{\rho_0, \rho_1, \theta_1, \varphi_1}(s, \theta_0, \varphi_0), \widetilde{\Theta}(\rho_1, \theta_1, \varphi_1))\label{inner2}\\
&\quad\hspace{6cm} \times\texttt{det}(D_{\rho_0, \rho_1, \theta_1, \varphi_1}(s, \theta_0, \varphi_0)).\notag
\end{align}
Making another change of variables $(s, \theta_0, \varphi_0) \mapsto r \in \mathbb{R}^3$ and using the fact that, regardless of the values of $\rho_1$, $\theta_1$ and $\varphi_1$, $\Theta_{\rho_0, \rho_1, \theta_1, \varphi_1}$ maps $(0,t) \times (0,\pi) \times (0,2\pi)$ surjectively onto a set that contains {\color{black}${\color{black}B(\rho_0 t)}$, where $B(r)$ is the ball in $\mathbb{R}^3$ of radius $r$ centred at the origin}, ~\eqref{inner2}, and hence \eqref{innera}, is bounded below by
\begin{equation}
\frac{1}{4t^2\upsilon^3_{\texttt{max}}}\int_{{\color{black}B(\rho_0 t)}}f(r, \widetilde{\Theta}(\rho_1, \theta_1, \varphi_1))\D r.
\label{inner3}
\end{equation}
Substituting this equation back into~\eqref{LB2} and changing $(\rho_1, \theta_1, \varphi_1)$ back to Cartesian coordinates, we have
\begin{equation}
\mathbf{E}_{(r_0,\Upsilon_0)}^\dagger[f(R_t, \Upsilon_t)\mathbf{1}_{\{J_1 \le t < J_2\}}] \ge \frac{C_2{\rm e}^{-\overline{\alpha} t}}{t^2}\int_{{\texttt v}_{\texttt{min}}}^{{\texttt v}_{\texttt{max}}}\D \rho_0\int_{{\color{black}B(\rho_0 t)}}\D r\int_V\D \upsilon_1f(r, \upsilon_1),
\label{LB3}
\end{equation}
where  $C_2 = \underline{\alpha}\underline{\pi}C_1/(4{\texttt v}_{\texttt{max}}^3)$. 
\smallskip

Now suppose we fix an initial configuration $(r_0, \upsilon_0) \in D \times V$, with $t{\texttt v}_{\texttt{max}} < \texttt{dist}(r_0, \partial D)$. By considering the event $\left\{J_2 \le t < J_3\right\}$ and noting that the scattering kernel is bounded below by $\underline{\pi}$, we may apply the Markov property together with~\eqref{LB3} to the process at time $J_1$ before choosing the new velocity. Using the bounds on $\alpha$ and $\pi$ as before, and recalling that $\Upsilon_0$ is uniformly distributed, we have
\begin{align}
\mathbf{E}^\dagger_{(r_0, \upsilon_0)}&[f(R_t, \Upsilon_t)\mathbf{1}_{\{J_2 \le t < J_3\}}] \notag\\
&\ge \int_0^t \D s \, \underline{\alpha}{\rm e}^{-\overline{\alpha}s}\underline{\pi}\mathbf{E}^\dagger_{(r_0 + s\upsilon_0, \Upsilon_0)}[f(R_{t-s}, \Upsilon_{t-s})\mathbf{1}_{\{J_1 \le t-s < J_2\}}]\notag\\
&\ge \int_0^t \D s \, \underline{\alpha}{\rm e}^{-\overline{\alpha}s}\underline{\pi}\frac{C_2{\rm e}^{-\overline{\alpha} (t-s)}}{(t-s)^2}\int_V\D \upsilon_1\int_{{\texttt v}_{\texttt{min}}}^{{\texttt v}_{\texttt{max}}}\D \rho_0\int_{\rho_0(t-s)B}\D r f(r_0 + s\upsilon_0 + r, \upsilon_1)\notag \\
&\ge \frac{C_3{\rm e}^{-\overline{\alpha} t}}{t^2}\int_0^t{\rm d}s\int_{V}\D\upsilon_1\int_{{\texttt v}_{\texttt{min}}}^{{\texttt v}_{\texttt{max}}}\D\rho_0\int_{\rho_0(t-s)B}{\rm d}rf(r_0 +s\upsilon_0+r, \upsilon_1)\notag\\
&= \frac{C_3{\rm e}^{-\overline{\alpha} t}}{t^2}\int_0^t{\rm d}s\int_{V}\D\upsilon_1\int_{{\texttt v}_{\texttt{min}}}^{{\texttt v}_{\texttt{max}}}\D\rho_0\int_{r_0 + s\upsilon_0 + \rho_0(t-s)B}{\rm d}yf(y, \upsilon_1),
\label{LB5}
\end{align}
where we have used the substitution $y = r_0 + s\upsilon_0 + r$ to obtain the final line and $C_3$ is another constant in $(0,\infty)$. Now note that for $s \le {\rho_0t}/({\rho_0 + {\texttt v}_{\texttt{max}}})$ we have $r_0 + {\color{black}B(\rho_0 t - (\rho_0 + {\texttt v}_{\texttt{max}})s)} \subset r_0 + s\upsilon_0 + {\color{black}B(\rho_0(t-s))}$. Combining this with~\eqref{LB5} and using Fubini, we have
\begin{align}
\mathbf{E}^\dagger_{(r_0, \upsilon_0)}&[f(R_t, \Upsilon_t)\mathbf{1}_{\{J_2 \le t < J_3\}}]\notag \\
&\ge \frac{C_3{\rm e}^{-\overline{\alpha} t}}{t^2}\int_V\D \upsilon_1\int_{{\texttt v}_{\texttt{min}}}^{{\texttt v}_{\texttt{max}}}\D\rho_0\int_{\mathbb{R}}\mathbf{1}_{\left\{0 \le s \le \frac{\rho_0}{\rho_0 + {\texttt v}_{\texttt{max}}}t\right\}}{\rm d}s\int_{\mathbb{R}^3}{\rm d}y\mathbf{1}_{\left\{|y-r_0| 
\le \rho_0 t - (\rho_0 + {\texttt v}_{\texttt{max}})s\right\}}f(y, \upsilon_1)\notag\\
&=\frac{C_3{\rm e}^{-\overline{\alpha} t}}{t^2}\int_{V}\D\upsilon_1\int_{{\texttt v}_{\texttt{min}}}^{{\texttt v}_{\texttt{max}}}\D\rho_0\int_{\mathbb{R}}\D s \int_{\mathbb{R}^3}\D y\mathbf{1}_{\left\{0 \le s \le \frac{\rho_0 t - |y-r_0|}{\rho_0 + {\texttt v}_{\texttt{max}}}\right\}}f(y, \upsilon_1)\notag\\
&=\frac{C_3{\rm e}^{-\overline{\alpha} t}}{t^2}\int_{V}\D\upsilon_1\int_{{\texttt v}_{\texttt{min}}}^{{\texttt v}_{\texttt{max}}}\D\rho_0\int_{\mathbb{R}^3}{\rm d}y \mathbf{1}_{\left\{|y-r_0| \le \rho_0 t\right\}}\left(\frac{\rho_0 t - |y-r_0|}{\rho_0 + {\texttt v}_{\texttt{max}}} \right)f(y, \upsilon_1).
\label{LB6}
\end{align}
We finally compute the integral with respect to $\rho_0 \in ({\texttt v}_{\texttt{min}}, {\texttt v}_{\texttt{max}})$. In order to do so, we first note that since $\rho_0 < {\texttt v}_\texttt{max}$, the integrand in \eqref{LB6} is bounded below by
\[
\frac{\rho_0 t - |y-r_0|}{2{\texttt v}_{\texttt{max}}}.
\]
Absorbing $1/2{\texttt v}_{\texttt{max}}$ into the constant $C_3$, applying Fubini and computing the ${\rho}_0$ integral yields
\begin{align}
\mathbf{E}^\dagger_{(r_0, \upsilon_0)}[f(R_t, \Upsilon_t)] &\ge \frac{C_3{\rm e}^{-\overline{\alpha} t}}{t^2}\int_{V}\D\upsilon_1\int_{\mathbb{R}^3}{\rm d}y \bigg[\frac{t}{2}\left({\texttt v}_{\texttt{max}}^2 - \left({\texttt v}_{\texttt{min}}\vee \frac{|y-r|}{t}\right)^2\right)\notag\\
 &\quad -|y-r|\left({\texttt v}_{\texttt{max}} - {\texttt v}_{\texttt{min}}\vee \frac{|y-r|}{t}\right)\bigg]\mathbf{1}_{\left\{|y-r_0| \le {\texttt v}_{\texttt{max}} t\right\}}f(y, \upsilon_1),
\label{LB7}  
\end{align}
as required. 
\end{proof}

We now turn to the proof of \ref{itm:A1} under the assumptions of (B1) and (B2). 

\begin{proof}[Proof of \ref{itm:A1}]
In this proof, we will follow a similar strategy to the one presented in~\cite[Section 4.2]{CV}. We therefore start by proving \ref{itm:A1} for initial configurations in $ D_{\varepsilon}\times V$. 

\smallskip

To this end, fix $(r, \upsilon) \in D_{\varepsilon} \times V$. From  Lemma~\ref{TB}, there exists an $i \in \{1, \dots, {\color{black}\mathfrak{n}}\}$ such that $r \in B(r_i, {\texttt v}_{\texttt{max}}\varepsilon/32) \cap D_{\varepsilon}$. Then, for each $t \in [\varepsilon/2, \varepsilon)$, Lemma~\ref{minlem} yields
\begin{align}
\notag\pdag_{(r, \upsilon)}(R_t \in {\rm d}z, \Upsilon_t \in {\rm d}w) &\ge\frac{C{\rm e}^{-\overline{\alpha} t}}{t^2}\bigg[\frac{t}{2}\left({\texttt v}_{{\texttt{max}}}^2 - \left({\texttt v}_{{\texttt{min}}}\vee \frac{|z-r|}{t}\right)^2\right)\notag\\
 &\qquad -|z-r|\left({\texttt v}_{{\texttt{max}}} - {\texttt v}_{{\texttt{min}}}\vee \frac{|z-r|}{t}\right)\bigg]\mathbf{1}_{\{z\in B(r, {\texttt v}_{\texttt{max}}t)\}}\,\D z \,\D w. \label{density}
\end{align}
Now, if $j\in \{1, \dots {\color{black}\mathfrak{n}}\}$ is such that $B(r_i, {\texttt v}_{\texttt{max}}\varepsilon/32) \cap B(r_j, {\texttt v}_{\texttt{max}}\varepsilon/32) \neq \emptyset$, the triangle inequality implies that $D_{\varepsilon} \cap (B(r_i, {\texttt v}_{\texttt{max}}\varepsilon/32) \cup B(r_j, {\texttt v}_{\texttt{max}}\varepsilon/32)) \subset B(r, {\texttt v}_{\texttt{max}}\varepsilon/8) \subset B(r, {\texttt v}_{\texttt{max}}t)$, with the latter inclusion following from the fact that $t \in [\varepsilon/2, \varepsilon)$.

\smallskip

Hence, for $z \in B(r_i, {\texttt v}_{\texttt{max}}\varepsilon/32) \cup B(r_j, {\texttt v}_{\texttt{max}}\varepsilon/32)$ and $t \in [\varepsilon/2, \varepsilon)$, the density on the right-hand side of~\eqref{density} is bounded below by a constant $C_{\varepsilon} > 0$, which is independent of $r, \upsilon, i$ and $j$. Hence,
\begin{equation}
\pdag_{(r, \upsilon)}(R_t \in {\rm d}z, \Upsilon_t \in {\rm d}w)\ge C_{\varepsilon}\mathbf{1}_{\{z\in D_{\varepsilon}\cap (B(r_i, \varepsilon/32) \cup B(r_j, \varepsilon/32))\}}\,{\rm d}z\,{\rm d}w, \qquad z\in D, w\in V.
\label{iteration}
\end{equation}

\smallskip

Now let $t \ge ({\color{black}\mathfrak{n}}+1)\varepsilon/2$. By writing $t = k \varepsilon/2 + t'$, for some $k \ge {\color{black}\mathfrak{n}}$ and $t' \in [\varepsilon/2, \varepsilon)$. {\color{black}We will demonstrate that a repeated application of \eqref{iteration} will lead to the inequality}
\begin{equation}
\pdag_{(r, \upsilon)}(R_t \in {\rm d}z, \Upsilon_t \in {\rm d}w) \ge C_{\varepsilon}c_{\varepsilon}^{k}\mathbf{1}_{\{z\in D_{\varepsilon}\}}{\rm d}z{\rm d}w, \qquad z\in D, w\in V,
\label{toprove}
\end{equation}
for $(r, \upsilon) \in D_{\varepsilon} \times V$, where $c_{\varepsilon} > 0$ {\color{black}is another unimportant constant which depends only on $\varepsilon$ and is defined in the following analysis.}

\smallskip

{\color{black}To this end, we start by noting that, since} $r \in D_{\varepsilon}$ and $\upsilon \in V$, there exists $i_0, i_1 \in \{1, \dots, {\color{black}\mathfrak{n}}\}$ such that $r \in B(r_{i_0}, {\texttt v}_{\texttt{max}}\varepsilon/32)$ and $B(r_{i_0}, {\texttt v}_{\texttt{max}}\varepsilon/32) \cap B(r_{i_1}, {\texttt v}_{\texttt{max}}\varepsilon/32) \cap D_{\varepsilon} \neq \emptyset$. Applying~\eqref{iteration} at time $t'$ {\color{black}(recall that we have identified $t = k \varepsilon/2 + t'$ for some $k\geq {\color{black}\mathfrak{n}}$)}  we obtain,
\begin{align}
\pdag_{(r, \upsilon)}(R_t &\in {\rm d}z, \Upsilon_t \in {\rm d}w) \notag\\
&= \pdag_{(r, \upsilon)}(R_{t' + k\varepsilon/2} \in \D z, \Upsilon_{t' + k\varepsilon/2} \in \D w)\notag \\
& \ge \edag_{(r, \upsilon)}\left[\mathbf{1}_{\{R_{t'} \in B(r_{i_1}, {\texttt v}_{\texttt{max}}\varepsilon/32) \cap D_{\varepsilon}, \Upsilon_{t'} \in V\}}\pdag_{(R_{t'}, \Upsilon_{t'})}(R_{k\varepsilon/2} \in \D z, \Upsilon_{k\varepsilon/2} \in \D w) \right]\notag\\
&= \int_{B(r_{i_1}, {\texttt v}_{\texttt{max}}\varepsilon/32)\cap D_{\varepsilon}}\int_V \pdag_{(r', \upsilon')}(R_{k\varepsilon/2} \in \D z, \Upsilon_{k\varepsilon/2} \in \D w)\pdag_{(r, \upsilon)}(R_{t'} \in \D r', \Upsilon_{t'} \in \D \upsilon')\notag\\
&\ge C_{\varepsilon}\int_{B(r_{i_1}, {\texttt v}_{\texttt{max}}\varepsilon/32)\cap D_{\varepsilon}}\int_V \pdag_{(r', \upsilon')}(R_{k\varepsilon/2} \in \D z, \Upsilon_{k\varepsilon/2} \in \D w)\notag \\
&\hspace{6cm} \times \mathbf{1}_{\{r'\in (B(r_{i_0}, {\texttt v}_{\texttt{max}}\varepsilon/32) \cup B(r_{i_1}, {\texttt v}_{\texttt{max}}\varepsilon/32))\cap D_{\varepsilon} \}}\D r' \D \upsilon' \notag \\
&=   C_{\varepsilon}\int_{B(r_{i_1}, {\texttt v}_{\texttt{max}}\varepsilon/32)\cap D_{\varepsilon}}\int_V \pdag_{(r', \upsilon')}(R_{k\varepsilon/2} \in \D z, \Upsilon_{k\varepsilon/2} \in \D w)\D r' \D \upsilon'.\label{step1}
\end{align}
We now turn our attention to $\pdag_{(r', \upsilon')}(R_{k\varepsilon/2} \in \D z, \Upsilon_{k\varepsilon/2} \in \D w)$, for $(r', \upsilon') \in (B(r_{i_1}, {\texttt v}_{\texttt{max}}\varepsilon/32) \cap D_{\varepsilon}) \times V$ and $k \ge {\color{black}\mathfrak{n}}$. Thanks to Lemma~\ref{TB}, for all $i_{k+1} \in \{1, \dots, {\color{black}\mathfrak{n}}\}$, there exist $i_2, \dots, i_k \in \{1, \dots, {\color{black}\mathfrak{n}}\}$ such that $B(r_{i_j}, \varepsilon/32) \cap B(r_{i_{j+1}}, \varepsilon/32) \neq \emptyset$ for every $j \in \{1, \dots, k\}$. {\color{black} Note, here we see the importance of choosing $k\geq \mathfrak{n}$, to ensure the validity of the previous statement.}

\smallskip

Applying~\eqref{iteration} and following the same steps that lead to~\eqref{step1}, we obtain
\begin{align}
\pdag_{(r', \upsilon')}&(R_{k\varepsilon/2} \in \D z, \Upsilon_{k\varepsilon/2} \in \D w) \notag \\
&\ge  C_{\varepsilon}\int_{B(r_{i_2}, \varepsilon/32)\cap D_{\varepsilon}}\int_V\pdag_{(r'', \upsilon'')}(R_{(k-1)\varepsilon/2} \in \D z, \Upsilon_{(k-1)\varepsilon/2} \in \D w)\D r'' \D \upsilon'' \label{step2}.
\end{align}
Iterating this step a further $k-2$ times, we obtain
\begin{align}
\pdag_{(r', \upsilon')}&(R_{k\varepsilon/2} \in \D z, \Upsilon_{k\varepsilon/2} \in \D w) \notag\\
&\ge  C_{\varepsilon}c_{\varepsilon}^{k-2}\int_{B(r_{i_k}, {\texttt v}_{\texttt{max}}\varepsilon/32)\cap D_{\varepsilon}}\int_V\pdag_{(r'', \upsilon'')}(R_{\varepsilon/2} \in \D z, \Upsilon_{\varepsilon/2} \in \D w)\D r'' \D \upsilon'', \label{step3}
\end{align}
where $c_{\varepsilon} = C_{\varepsilon}{\rm Vol}(V)\min_{i = 1, \dots, n}{\rm Vol}(B(r_i, {\texttt v}_{\texttt{max}}\varepsilon/32) \cap D_{\varepsilon})$. Using this inequality to bound the right-hand side of~\eqref{step1} yields
\begin{align}
\pdag_{(r, \upsilon)}(R_t &\in {\rm d}z, \Upsilon_t \in {\rm d}w) \notag\\
&\ge C_{\varepsilon}c_{\varepsilon}^{k-1}\int_{B(r_{i_{k}}, \varepsilon/32)\cap D_{\varepsilon}}\int_V\pdag_{(r', \upsilon')}(R_{\varepsilon/2} \in \D z, \Upsilon_{\varepsilon/2} \in \D w)\D r' \D \upsilon'. \label{step4}
\end{align}
We now apply~\eqref{iteration} a final time at time $\varepsilon/2$ to obtain
\begin{equation}
\pdag_{(r, \upsilon)}(R_t \in {\rm d}z, \Upsilon_t \in {\rm d}w) \ge C_{\varepsilon}c_{\varepsilon}^{k}\mathbf{1}_{\{z\in B(r_{i_{k+1}}, \varepsilon/2) \cap D_{\varepsilon}\}}\, \D z\,  \D w.
\label{step5}
\end{equation}
Since this inequality holds for every $i_{k+1} \in \{1, \dots, {\color{black}\mathfrak{n}}\}$, it also follows that
\begin{align}
\pdag_{(r, \upsilon)}(R_t \in {\rm d}z, \Upsilon_t \in {\rm d}w) &\ge C_{\varepsilon}c_{\varepsilon}^{k}\sup_{i_{k+1} \in \{1, \dots, {\color{black}\mathfrak{n}}\}}\mathbf{1}_{\{z\in B(r_{i_{k+1}}, \varepsilon/2) \cap D_{\varepsilon}\}}\, \D z \, \D w \notag \\
&\ge C_{\varepsilon}c_{\varepsilon}^{k}\,\mathbf{1}_{\{z\in D_{\varepsilon}\}} \, \D z \, \D w, \notag
\end{align}
where the final line follows from Lemma~\ref{TB} since $k+1 > {\color{black} \mathfrak{n}}$. This is the lower bound claimed in~\eqref{toprove}. 

\smallskip

{\color{black} Finally, noting that for any two events $A,B$, $\Pr(A|B) = \Pr(A \cap B)/\Pr(B)\geq \Pr(A \cap B)$, we have that for initial conditions $(r, \upsilon) \in D_{\varepsilon}\times V$, any $t_0 \ge (\mathfrak{n}+1)\varepsilon/2$ and $\nu$ equal to Lebesgue measure on $D_{\varepsilon}\times V$, there exists a constant $c_1\in(0,\infty)$ such that 
\[
\mathbf{P}_{(r, \upsilon)}((R_{t_0}, \Upsilon_{t_0}) \in \cdot \;| t_0 < {\texttt{k}}) \ge c_1 \nu(\cdot),
\] 
as required by  \ref{itm:A1}.
}
\bigskip

We now prove \ref{itm:A1} for initial conditions in $(D\backslash D_{\varepsilon}) \times V$. Once again, we recall that assumptions \ref{itm:B1} and \ref{itm:B2} are in force.

\smallskip

Choose $r\in D\backslash D_{\varepsilon}$, $\upsilon \in V$ and define the (deterministic) time
\[
\kappa^{D\backslash D_{\varepsilon}}_{r, \upsilon} \coloneqq \inf\{t > 0 : r + t\upsilon \not\in \partial D\backslash D_{\varepsilon}\},
\]
{\color{black}which is the time it would take a neutron released at $r$ with velocity $\upsilon$ to hit the boundary of $D\backslash D_\varepsilon$ if no scatter or fission took place. Note in particular that ${\color{black}\kappa^{D\backslash D_{\varepsilon}}_{r, \upsilon} }$ is not a random time but entirely deterministic.} 
We first consider the case $r + {\color{black}\kappa^{D\backslash D_{\varepsilon}}_{r, \upsilon} } \upsilon \in \partial D_{\varepsilon}$
\begin{equation}
\pdag_{(r, \upsilon)}(R_{{\color{black}\kappa^{D\backslash D_{\varepsilon}}_{r, \upsilon} }} \in \partial D_{\varepsilon}) \ge {\rm e}^{-\bar{\alpha}{\color{black}\kappa^{D\backslash D_{\varepsilon}}_{r, \upsilon} }} \ge {\rm e}^{-\bar{\alpha}{\rm diam}(D)/{\texttt v}_{\texttt{min}}}.
\end{equation}
Combining this with~\eqref{toprove} and the Markov property, for all $t \ge ({\color{black}\mathfrak{n}}+1)\varepsilon/2$
\begin{align}
\mathbf{P}_{(r, \upsilon)}&(R_{{\color{black}\kappa^{D\backslash D_{\varepsilon}}_{r, \upsilon} }+t} \in \D z, \Upsilon_{{\color{black}\kappa^{D\backslash D_{\varepsilon}}_{r, \upsilon} }+t} \in \D w | {\color{black}\kappa^{D\backslash D_{\varepsilon}}_{r, \upsilon} }+t < \texttt{k})\notag \\
& \ge \pdag_{(r, \upsilon)}(R_{{\color{black}\kappa^{D\backslash D_{\varepsilon}}_{r, \upsilon} }+t} \in \D z, \Upsilon_{{\color{black}\kappa^{D\backslash D_{\varepsilon}}_{r, \upsilon} }+t} \in \D w)\notag\\
&\ge {\rm e}^{-\bar{\alpha}{\rm diam}(D)/{\texttt v}_{\texttt{min}}}C_{\varepsilon}c_{\varepsilon}^{k}\mathbf{1}_{\{z\in D_{\varepsilon}\}}\, {\rm d}z\,{\rm d}w,
\end{align}
where $k \ge{\color{black}\mathfrak{n}}$ is such that $t = k\varepsilon/2 + t'$ for some $t' \in [\varepsilon/2, \varepsilon)$.

\smallskip

On the other hand, suppose $r+{\color{black}\kappa^{D\backslash D_{\varepsilon}}_{r, \upsilon} } \upsilon \in \partial D$. {\color{black}Then, recalling the assumptions \ref{itm:B1} and  \ref{itm:B2}} it follows that $\{J_1 < {\color{black}\kappa^{D\backslash D_{\varepsilon}}_{r, \upsilon} } \wedge (t_{\varepsilon} - s_{\varepsilon}), \Upsilon_{J_1} \in K_{r+\upsilon J_1}, J_2 > t_{\varepsilon}\} \subset \{R_{t_{\varepsilon}} \in D_{\varepsilon}, t_{\varepsilon} < \texttt{k}\}$. Heuristically speaking, this is because if the first jump occurs before time ${\color{black}\kappa^{D\backslash D_{\varepsilon}}_{r, \upsilon} } \wedge (t_{\varepsilon} - s_{\varepsilon})$, then the process hasn't hit the boundary and there are still (at least) $s_{\varepsilon}$ units of time left until $t_{\varepsilon}$. {\color{black}By then choosing the new velocity, $\Upsilon_{J_1}$, from $K_{r+{\upsilon}J_1}$, thanks to the assumption \ref{itm:B1} and the remarks around \eqref{L_r},} this implies that the process will remain in {\color{black}$D\backslash D_\varepsilon$} for $s_{\varepsilon}$ units of time, {\color{black}at some point in time after which, it will move into $D_{\varepsilon}$, providing the process doesn't jump again before entering $D_{\varepsilon}$.} Combining this with the usual bounds on $\alpha$, and recalling from \ref{itm:B2} that ${\rm Vol}(K_r)>\gamma>0$ for all $r\in D\backslash D_\varepsilon$ {\color{black}and $\upsilon\in V$}, we have
\begin{align}
\mathbf{P}_{(r, \upsilon)}(R_{t_{\varepsilon}} \in D_{\varepsilon}, t_{\varepsilon} < \texttt{k})& \ge \pdag_{(r, \upsilon)}(J_1 < {\color{black}\kappa^{D\backslash D_{\varepsilon}}_{r, \upsilon} } \wedge (t_{\varepsilon} - s_{\varepsilon}), \Upsilon_{J_1} \in K_{r+\upsilon J_1}, J_2 > t_{\varepsilon})\notag\\
&\ge {\color{black}\underline{\pi}\gamma}{\rm e}^{-\overline{\alpha}t_{\varepsilon}}\pdag_{(r, \upsilon)}(J_1 < {\color{black}\kappa^{D\backslash D_{\varepsilon}}_{r, \upsilon} } \wedge (t_{\varepsilon} - s_{\varepsilon})).
\end{align}
Along with~\eqref{toprove}, this implies that, for all {\color{black}$r\in D\backslash D_\varepsilon$, $\upsilon\in V$ and} $t \ge (\mathfrak{n}+1)\varepsilon/2$ such that $t + t_{\varepsilon} \ge {\color{black}\kappa^{D\backslash D_{\varepsilon}}_{r, \upsilon} }$
\begin{align}
\mathbf{P}_{(r, \upsilon)}(R_{t+t_{\varepsilon}} \in \D z, &\Upsilon_{t+ t_{\varepsilon}}\in \D w | t + t_{\varepsilon} < \texttt{k})\notag\\
&\ge \frac{\mathbf{P}_{(r, \upsilon)}(R_{t_{\varepsilon}} \in D_{\varepsilon}, t_{\varepsilon} < \texttt{k}, R_{t+t_{\varepsilon}} \in \D z, \Upsilon_{t+t_\varepsilon} \in \D w)}{\mathbf{P}_{(r, \upsilon)}(t+t_{\varepsilon} < \texttt{k})}\notag \\
& = \frac{\pdag_{(r, \upsilon)}(R_{t+t_\varepsilon} \in \D z ; \Upsilon_{t+t_\varepsilon} \in \D w | R_{t_\varepsilon} \in D_{\varepsilon}, t_\varepsilon < \texttt{k})\mathbf{P}_{(r, \upsilon)}(R_{t_\varepsilon} \in D_{\varepsilon}, t_\varepsilon < \texttt{k})}{\mathbf{P}_{(r, \upsilon)}(t+t_{\varepsilon} < \texttt{k})}\notag \\
&{\color{black}\ge 
\inf_{r\in D_\varepsilon, \upsilon \in V}\pdag_{(r, \upsilon)}(R_{t}\in \D z ; \Upsilon_{t} \in \D w )}\notag\\
&{\color{black}\hspace{3cm}\times\frac{\pdag_{(r, \upsilon)}(J_1 < {\color{black}\kappa^{D\backslash D_{\varepsilon}}_{r, \upsilon} } \wedge (t_{\varepsilon} - s_{\varepsilon}))}{\mathbf{P}_{(r, \upsilon)}(t+t_{\varepsilon} < \texttt{k})}
\underline\pi\gamma}{\rm e}^{-\overline{\alpha}t_{\varepsilon}}c_{\varepsilon}^k\mathbf{1}_{\{z\in D_{\varepsilon}\}}\,\D z\,\D w\notag\\
&\ge \frac{\pdag_{(r, \upsilon)}(J_1 < {\color{black}\kappa^{D\backslash D_{\varepsilon}}_{r, \upsilon} } \wedge (t_{\varepsilon} - s_{\varepsilon}))}{\mathbf{P}_{(r, \upsilon)}(t+t_{\varepsilon} < \texttt{k})}{\color{black}\underline\pi\gamma}{\rm e}^{-\overline{\alpha}t_{\varepsilon}}C_{\varepsilon}c_{\varepsilon}^k\mathbf{1}_{\{z\in D_{\varepsilon}\}}\,\D z\,\D w.
\label{laststep}
\end{align}
Now, since we are considering the case $r+{\color{black}\kappa^{D\backslash D_{\varepsilon}}_{r, \upsilon} } \upsilon \in \partial D$ and $t+t_{\varepsilon} \ge {\color{black}\kappa^{D\backslash D_{\varepsilon}}_{r, \upsilon} }$, it follows that $\{t+t_{\varepsilon} < \texttt{k}\} \subset \{J_1 < {\color{black}\kappa^{D\backslash D_{\varepsilon}}_{r, \upsilon} }\}$. Then, 
\begin{align}
\frac{\pdag_{(r, \upsilon)}(J_1 < {\color{black}\kappa^{D\backslash D_{\varepsilon}}_{r, \upsilon} } \wedge (t_{\varepsilon} - s_{\varepsilon}))}{\mathbf{P}_{(r, \upsilon)}(t+t_{\varepsilon} < \texttt{k})} &\ge \frac{\pdag_{(r, \upsilon)}(J_1 < {\color{black}\kappa^{D\backslash D_{\varepsilon}}_{r, \upsilon} } \wedge (t_{\varepsilon} - s_{\varepsilon}))}{\mathbf{P}_{(r, \upsilon)}(J_1 < {\color{black}\kappa^{D\backslash D_{\varepsilon}}_{r, \upsilon} })}\notag\\
&\ge \frac{1- {\rm e}^{-\underline{\alpha}({\color{black}\kappa^{D\backslash D_{\varepsilon}}_{r, \upsilon} } \wedge (t_{\varepsilon} - s_{\varepsilon}))}}{1-{\rm e}^{-\overline{\alpha}{\color{black}\kappa^{D\backslash D_{\varepsilon}}_{r, \upsilon} }}},
\end{align}
with the  bound on the right-hand side above being itself bounded below by a constant that does not depend on $(r, \upsilon)$. {\color{black} Substituting this back into~\eqref{laststep}, this proves \ref{itm:A1} with $\nu$ taken as Lebesgue measure on $D_\varepsilon\times V$ as before, $t_0$ can be sufficiently taken as $( \mathfrak{n}+1)\varepsilon/2 + {\rm diam}(D)/{\texttt v}_{\texttt{min}}$ and we may start with any initial configurations in $D\backslash D_{\varepsilon} \times V$.}
\end{proof}

In order to prove~\ref{itm:A2} we require the following lemma, the proof of which will be given after that of~\ref{itm:A2}.
\begin{lem}\label{jumps}
For all $r\in D$ and $\upsilon \in V$, recalling that $J_k$ denotes the $k^{th}$ jump time of the process $(R, \Upsilon)$, we have
\begin{equation}
\pdag_{(r, \upsilon)}(J_7 < \emph{\texttt{k}}, R_{J_7} \in {\rm d}z) \le C\mathbf{1}_{\{z\in D\}}\,{\rm d}z,
\label{jump7}
\end{equation}
for some constant $C > 0$, and
\begin{equation}
\pdag_{\nu}(J_1 <\emph{\texttt{k}}, R_{J_1} \in {\rm d}z) \ge c\mathbf{1}_{\{z\in D\}}\,{\rm d}z,
\label{jump1}
\end{equation}
for another constant $c>0$, {\color{black}where $\nu$, from the proof of (A1), is Lebesgue measure on $D_\varepsilon\times V$.}
\end{lem}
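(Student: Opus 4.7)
My plan is to prove the two inequalities by rather different routes, reflecting their different natures: the upper bound requires smoothing a point mass through several jumps into a bounded density on three-dimensional space, whereas the lower bound begins from the spread-out initial measure $\nu$ and needs only a single jump.

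\medskip

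For the upper bound \eqref{jump7}, I would proceed as follows. Applying the strong Markov property successively at the jump times $J_1, \dots, J_7$, the left-hand side can be expanded as an integral over $(\tau_1, \ldots, \tau_7) \in \mathbb{R}_+^7$ and $(v_1, \ldots, v_6) \in V^6$, where $\tau_i := J_i - J_{i-1}$ and $v_i := \Upsilon_{J_i}$. The integrand is a product of factors $\alpha(\cdot,\cdot)\pi(\cdot,\cdot,\cdot)$ and a survival factor of the form $e^{-\bar{\alpha}(\tau_1+\cdots+\tau_7)}$; by the uniform upper bounds $\alpha\le\bar{\alpha}$ and $\pi\le\bar{\pi}$, this integrand is controlled by $\bar{\alpha}^7\bar{\pi}^6 e^{-\bar{\alpha}(\tau_1+\cdots+\tau_7)}$. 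Writing $R_{J_7}-r = \tau_1\upsilon + \sum_{k=2}^{7}W_k$ with $W_k := \tau_k v_{k-1}$, each $W_k$ ($k\ge 2$) is an independent three-dimensional random vector whose density on $\mathbb{R}^3$, computed via polar decomposition exactly as in Lemma \ref{minlem}, is bounded apart from a $|w|^{-2}$ singularity at the origin and decays exponentially at infinity. Classical convolution estimates in $\mathbb{R}^3$ (one self-convolution of $|w|^{-2}$ yields $|w|^{-1}$, a second yields $\log$ and a third is bounded) show that the joint convolution of the six densities $W_2,\ldots,W_7$ is bounded; subsequent convolution with the law of the degenerate one-dimensional shift $\tau_1\upsilon$ preserves boundedness. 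This gives the required uniform Lebesgue density bound on $D$.

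\medskip

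For the lower bound \eqref{jump1}, I would compute directly. Since $\nu$ is Lebesgue measure on $D_\varepsilon\times V$ and the joint density of the first jump time and position under $\pdag_{(r,\upsilon)}$ is explicit, I can write
\[
\pdag_\nu(J_1 < \texttt{k},\, R_{J_1}\in dz) = \int_{D_\varepsilon}dr\int_V dv\int_0^{\kappa^D_{r,v}}e^{-\bar{\alpha}s}\alpha(r+sv,v)\,\delta_{r+sv}(dz)\,ds.
\]
The change of variable $r \mapsto w := r+sv$ (unit Jacobian) collapses the delta at $z$, and using $\alpha \ge \underline{\alpha}$ together with $e^{-\bar{\alpha}s}\ge e^{-\bar{\alpha}s_0}$ for $s\le s_0$, the problem reduces to a uniform lower bound on
\[
I(z) := \int_V dv\int_0^{s_0}\mathbf{1}_{\{z-sv\in D_\varepsilon\}}\,ds,\qquad z\in D,
\]
for a suitable $s_0$. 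If $z\in D_\varepsilon$, then for all sufficiently small $s$ and any $v\in V$, $z-sv$ remains in $D_\varepsilon$, giving a positive contribution of size proportional to $\mathrm{Vol}(V)$. If $z\in D\setminus D_\varepsilon$, the geometric conditions \ref{itm:B1}--\ref{itm:B2} (combined with the central symmetry of $V$, so that $-K_z\subset V$) furnish a set of directions of volume at least $\gamma>0$ and times $s\in[s_\varepsilon,t_\varepsilon]$ such that $z-sv\in D_\varepsilon$, providing a uniform lower bound on $I(z)$.

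\medskip

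The main obstacle will be the convolution estimate in the upper bound: one must carefully manage the $|w|^{-2}$ singularity at the origin and justify rigorously why six random three-dimensional increments (the first jump being degenerate along the fixed direction $\upsilon$) suffice to produce a bounded density. An equivalent implementation, closer in spirit to the proof of Lemma \ref{minlem}, is to perform an explicit change of variables trading three well-chosen coordinates among the $(\tau_i, v_j)$ for the three coordinates of $z = R_{J_7}$; the redundancy of having seven jumps with six random directions provides the flexibility to make this change with an integrable Jacobian.
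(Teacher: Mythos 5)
Your proposal follows essentially the same route as the paper, so only a few remarks are worth making. For the upper bound, the paper's proof also reduces the law of $R_{J_7}$ to the convolution of six (after the uniform bounds) i.i.d.\ increments, each with a density dominated by $g(w)=e^{-\underline{\alpha}|w|/{\texttt v}_{\texttt{max}}}/|w|^2$, and then concludes via Young's inequality (noting $g\in L^{6/5}(\mathbb{R}^3)$) that $g^{\ast 6}\in L^\infty(\mathbb{R}^3)$; your Riesz-potential heuristic reaches the same endpoint. Two small corrections. First, when you control the integrand from above you must use $\underline{\alpha}$, not $\bar{\alpha}$, in the exponential survival factor: the exact factor $e^{-\int_0^{\tau_i}\alpha}$ is bounded above by $e^{-\underline{\alpha}\tau_i}$, not $e^{-\bar{\alpha}\tau_i}$. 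Second, the paper implements the ``drop the killing constraint'' step via a coupling of the killed NRW with an unkilled one on $\mathbb{R}^3$; this is just a tidy way of writing what you do implicitly when integrating over all of $\mathbb{R}_+^7\times V^6$ without the event $\{J_7<\texttt{k}\}$. For the lower bound, the paper passes to polar coordinates and a radial change of variable to obtain a $|z-r|^{-2}$ density and then lower bounds $\int_{D_\varepsilon}\mathbf{1}_{\{[r,z]\subset D\}}\,dr$ using (B1)--(B2); your direct substitution $r\mapsto w=r+sv$ reaches the same geometric estimate on the volume of favourable $(s,v)$ pairs, with the segment constraint $[z-sv,z]\subset D$ covered either by the convexity of $D$ or by the second clause of (B2). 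Both routes are correct, and yours is arguably a little slicker; just remember that the killing clock at rate $\bar{\beta}-\beta$ contributes an additional exponential factor that should be bounded below (using (H1)) before it can be absorbed into the constant.
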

\begin{proof}[Proof of \ref{itm:A2}]
Again, we follow the proof given by the authors in~\cite{CV}. Let $t \ge {7{\rm diam}(D)}/{{\texttt v}_{\texttt{min}}}$ and note that on the event $\{\texttt{k} > t\}$, we have $J_7 \le {7{\rm diam}(D)}/{{\texttt v}_{\texttt{min}}}$ almost surely. This inequality along with the strong Markov property imply that,
\begin{align}
\mathbf{P}_{(r, \upsilon)}(t < \texttt{k}) &\le \edag_{(r, \upsilon)}\left[\mathbf{1}_{\{J_7<t\}}\mathbf{P}_{(R_{J_7}, \Upsilon_{J_7})}\left(t-s < \texttt{k} \right)_{s = J_7}\right]\notag\\
& \le\edag_{(r, \upsilon)}\left[\mathbf{P}_{(R_{J_7}, \Upsilon_{J_7})}\left( t-\frac{7{\rm diam}(D)}{{\texttt v}_{\texttt{min}}} < \texttt{k} \right) \right].\label{UB1}
\end{align}
Since $\pi$ is uniformly bounded above, 
conditional on $\{J_7 < \infty, R_{J_7} \in \D z\}$, the density of $\Upsilon_{J_7}$ is bounded above by {\color{black}$\overline{\pi}$} multiplied by Lebesgue measure on $V$. Combining this with~\eqref{jump7} and~\eqref{UB1}, we obtain
\begin{equation}
\mathbf{P}_{(r, \upsilon)}(t < \texttt{k})\le {\color{black}C'}\int_D\int_V\mathbf{P}_{(z, w)}\left(t-\frac{7{\rm diam}(D)}{{\texttt v}_{\texttt{min}}} < \texttt{k}\right){\rm d}w\,{\rm d}z,
\label{UB2}
\end{equation}
for some $C'\in(0,\infty)$
Similarly, {\color{black} for $t\ge {\rm diam}(D)/{\texttt v}_{\texttt{min}}$, equation~\eqref{jump1},
 the fact that  the inclusion $\{t < \texttt{k}\}\subset\{J_1 \le {\rm diam}(D)/{\texttt v}_{\texttt{min}}\}$,} 
the strong Markov property and the fact that $\pi$ is uniformly bounded below entail that,
\begin{align*}
\mathbf{P}_{\nu}(t < \texttt{k})&
= \edag_{\nu}\left[\mathbf{1}_{\{J_1 \le \texttt{k}\}}\mathbf{P}_{(R_{J_1}, \Upsilon_{J_1})}\left(t-s < \texttt{k} \right)_{s = J_1}\right]\notag\\
&\geq  \edag_{\nu}\left[\mathbf{1}_{\{J_1 \le \texttt{k}\}}\mathbf{P}_{(R_{J_1}, \Upsilon_{J_1})}\left(t< \texttt{k} \right)\right]\notag\\
& \ge {\color{black}c'}\int_D\int_V\mathbf{P}_{(z, w)}(t < \texttt{k})\,{\rm d}w\,{\rm d}z,
\end{align*}
for some $c'\in(0,\infty)$, where {\color{black} $\nu$ is Lebesgue measure on $D_\varepsilon\times V$}. Putting \eqref{UB1} and \eqref{UB2} together, for all $t \ge 8{\rm diam}(D)/{\texttt v}_{\texttt{min}}$, we have
\begin{equation}
\mathbf{P}_{(r, \upsilon)}(t < \texttt{k}) \le \frac{C'}{c'}\mathbf{P}_{\nu}\left(t- \frac{7{\rm diam}(D)}{{\texttt v}_{\texttt{min}}} < \texttt{k}\right).
\label{A2UB1}
\end{equation}
Now, recalling $t_0$ and $\nu$ from the proof of \ref{itm:A1}, it follows from \ref{itm:A1} that 
\begin{equation}
\label{reminder}
\pdag_{\nu}((R_{t_0}, \Upsilon_{t_0}) \in \cdot) \ge c_1\mathbf{P}_{\nu}(t_0 < \texttt{k})\nu(\cdot).
\end{equation}  The event $\{t < \texttt{k}\}$ occurs if the particle has either been killed on the boundary of $D$ or if it has been absorbed by fissile material, which occurs at rate $\bar\beta - \beta$. Since $t_0$ and $\nu$ are fixed, and {\color{black}$\overline\beta-\beta\leq \overline\beta+1<\infty$} by assumption, $\mathbf{P}_{\nu}(t_0 < \texttt{k}) \ge K$ for some constant $K > 0$. Thus, keeping $t \ge 8{\rm diam}(D)/{\texttt v}_{\texttt{min}}$, using \eqref{reminder}
\begin{align}
\mathbf{P}_{\nu}\left(t-\frac{7{\rm diam}(D)}{{\texttt v}_{\texttt{min}}} + t_0 < \texttt{k} \right) &= \mathbf{E}_{\nu}\left[\mathbf{1}_{\{t_0<\texttt{k}\}}\pdag_{(R_{t_0}, \Upsilon_{t_0})}\left(t-\frac{7{\rm diam}(D)}{{\texttt v}_{\texttt{min}}} < \texttt{k} \right)\right]\notag\\
&\ge \tilde{c}_1 \mathbf{P}_{\nu}\left(t-\frac{7{\rm diam}(D)}{{\texttt v}_{\texttt{min}}}< \texttt{k} \right), \label{A2LB1}
\end{align}
where $\tilde{c}_1 = Kc_1$.

\smallskip

Now define $N=\lceil 7 \text{diam}(D)/({\texttt v}_{\texttt{min}}t_0) \rceil$. Then, for any $t > 0$, $t - 7{\rm diam}(D)/{\texttt v}_{\texttt{min}}+Nt_0 \ge t$ so that, trivially,
\begin{equation}
\mathbf{P}_{\nu}(t < \texttt{k}) \ge \mathbf{P}_{\nu}\left(t-\frac{7{\rm diam}(D)}{{\texttt v}_{\texttt{min}}} + Nt_0 < \texttt{k}\right).
\label{A2LB2}
\end{equation}
Applying~\eqref{A2LB1} $N$ times implies that
\begin{equation}
\mathbf{P}_{\nu}(t < \texttt{k}) \ge \tilde{c}_1^N \mathbf{P}_{\nu}\left(t-\frac{4{\rm diam}(D)}{{\texttt v}_{\texttt{min}}} < \texttt{k}\right).
\label{A2LB3}
\end{equation}
Combining this with~\eqref{A2UB1} completes the proof of \ref{itm:A2}.
\end{proof}

\begin{proof}[Proof of Lemma \ref{jumps}]
Let us first prove~\eqref{jump7}. Again, following the proof given in~\cite{CV}, we couple the neutron transport random walk in $D$ with one on the whole of $\mathbb{R}^3$. Denote by $(\hat{R}_t, \hat{\Upsilon}_t)$ the neutron random walk in $\hat{D} = \mathbb{R}^3$, coupled with $(R, \Upsilon)$ such that $\hat{R}_t = R_t$ and $\hat{\Upsilon}_t = \Upsilon_t$ for all $t < \texttt{k}$ and $(R_0 , \Upsilon_0) = (\hat{R}_0 , \hat{\Upsilon}_0) = (r,\upsilon)$, for $r\in D$, $\upsilon\in V$. Denote by $\hat{J}_1 < \hat{J}_2 < \dots$ the jump times of $\hat{\Upsilon}_t$. Then for each $k \ge 1$ such that $J_k < \texttt{k}$, we have $\hat{J}_k = J_k$. Due to the inequality
\begin{equation}
\edag_{(r, \upsilon)}[f(R_{J_7}) ; J_7 < \texttt{k}] \le \mathbf{E}_{(r, \upsilon)}[f(\hat{R}_{\hat{J}_7})], \qquad r\in D, \upsilon \in V,
\label{ineq1}
\end{equation}
we will consider the distribution of $\hat{R}_{\hat{J}_i}$ for $i \ge 2$. We first look at the case when $i=2$. For $(r, \upsilon) \in D\times V$ and non-negative, bounded, measurable functions $f$,
\begin{align}
\mathbf{E}_{(r, \upsilon)}[f(\hat{R}_{\hat{J}_2})] &= \mathbf{E}_{(r, \upsilon)}[f(r + \upsilon \hat{J}_1 + \hat{\Upsilon}_{\hat{J}_1}(\hat{J}_2 - \hat{J}_1)]\notag \\
& \le \bar{\alpha}^2\bar{\pi}\int_0^\infty\D j_1 \int_V\D \upsilon_1 \int_0^\infty \D j_2 {\rm e}^{- \underline{\alpha}(j_1 + j_2)}f(r + \upsilon j_1 + \upsilon_1 j_2)\label{j1}
\end{align}
For $j_1$ fixed, we consider the integrals over $\upsilon_1$ and $j_2$ in~\eqref{j1}. Making the change of variables $\upsilon_1 \mapsto (\rho, \varphi, \theta)$, we have
\begin{align}
\int_V&\D \upsilon_1 \int_0^\infty \D j_2 {\rm e}^{- \underline{\alpha}j_2}f(r + \upsilon j_1 + \upsilon_1 j_2) \notag\\
&\le \int_{{\texttt v}_{\texttt{min}}}^1\D \rho \int_0^{2\pi} \D\theta\int_0^\pi \D \varphi \int_0^\infty \D j_2 {\rm e}^{- \underline{\alpha}j_2}f\left(r + \upsilon j_1 + \widetilde{\Theta}(\rho j_2, \theta, \varphi)\right)\rho^2\sin\varphi, \label{j2}
\end{align}
where $\widetilde{\Theta}$ was defined in~\eqref{cov2}. Now making the substitution $u = \rho j_2$ in~\eqref{j2},
\begin{align}
\int_V&\D \upsilon_1 \int_0^\infty \D j_2 {\rm e}^{- \underline{\alpha}j_2}f(r + \upsilon j_1 + \upsilon_1 j_2) \notag\\
&\le \int_{{\texttt v}_{\texttt{min}}}^{{\texttt v}_{\texttt{max}}}\D \rho \int_0^{2\pi} \D\theta\int_0^\pi \D \varphi \int_0^\infty \D u {\rm e}^{- \underline{\alpha}u/\rho}f\left(r + \upsilon j_1 + \widetilde{\Theta}(u, \theta, \varphi)\right)\rho\sin\varphi \notag\\
&\le C\int_0^{2\pi} \D\theta\int_0^\pi \D \varphi \int_0^\infty \D u {\rm e}^{- \underline{\alpha}u/{\texttt v}_{\texttt{max}}}f\left(r + \upsilon j_1 + \widetilde{\Theta}(u, \theta, \varphi)\right)\sin\varphi, \label{j3}
\end{align}
where $C = {\texttt v}_{\texttt{max}}({\texttt v}_{\texttt{max}} - {\texttt v}_{\texttt{min}})$. Making a final change of variables $(u, \theta, \varphi) \mapsto x \in \mathbb{R}^3$, we have
\begin{align}
\int_V\D \upsilon_1 \int_0^\infty \D j_2 {\rm e}^{- \underline{\alpha}j_2}f(r + \upsilon j_1 + \upsilon_1 j_2) \le C\int_{\mathbb{R}^3}\D x \, f(r + \upsilon j_1 + x)\frac{{\rm e}^{-\underline{\alpha}|x|/{\texttt v}_{\texttt{max}}}}{|x|^2}.
\label{j4}
\end{align}
Substituting this back into~\eqref{j1} yields
\begin{align}
\mathbf{E}_{(r, \upsilon)}[f(\hat{R}_{\hat{J}_2})] &\le \bar{\alpha}K\int_0^\infty\D j_1{\rm e}^{-\underline{\alpha}j_1}\int_{\mathbb{R}^3}\D x f(r + \upsilon j_1 + x)\frac{{\rm e}^{-\underline{\alpha}|x|/{\texttt v}_{\texttt{max}}}}{|x|^2},
\end{align}
where $K = \bar\alpha\bar{\pi} C$. Iterating this process over the next five jumps of the process gives
\begin{align}
\mathbf{E}_{(r, \upsilon)}[f(\hat{R}_{\hat{J}_7})] &\le \bar\alpha K^6\int_0^\infty\D j_1{\rm e}^{-\underline{\alpha}j_1}\int_{\mathbb{R}^3}\D x_1 \dots \int_{\mathbb{R}^3} \D x_6  f(r + \upsilon j_1 + x_1 + \dots + x_6)g(x_1)\dots g(x_6)\notag\\
\end{align}
where $g(x) = {{\rm e}^{-\underline{\alpha}|x|/{\texttt v}_{\texttt{max}}}}/ {|x|^2}$, $x\in \mathbb{R}^3$. Now, $g \in L^p(\mathbb{R}^3)$ for each $p < 3/2$ so that, in particular, $g \in L^{6/5}(\mathbb{R}^3)$. Hence, repeatedly applying Young's inequality implies that the six-fold convolution {\color{black}$ \ast^6 g  \in L^\infty(\mathbb{R}^3)$. (The reader will note that this is the fundamental reason we have focused our calculations around the 7th jump time $J_7$, rather than it being an arbitrary choice.)} Making the substitution $x = x_1 + \dots + x_6$,
\begin{align}
\edag_{(r, \upsilon)}[f(\hat{R}_{\hat{J}_7})] \le &\bar\alpha K^6\Vert \ast^6 g \Vert_\infty \int_0^\infty\D j_1{\rm e}^{-\underline{\alpha}j_1}\int_{\mathbb{R}^3}\D x_1 \dots \int_{\mathbb{R}^3} \D x_6  f(r + \upsilon j_1 + x).
\end{align}
Finally, setting $z = r + \upsilon j_1 + x$ yields
\begin{equation}
\edag_{(r, \upsilon)}[f(R_{J_7}) ; J_7 < \texttt{k}] \le \mathbf{E}_{(r, \upsilon)}[f(\hat{R}_{\hat{J}_7})] \le C' \int_{\mathbb{R}^3}f(z) \D z,
\end{equation}
where $C' = \bar\alpha K^6 \Vert g\ast \dots \ast g\Vert_{\infty}$, which completes the proof of~\eqref{jump7}.

\smallskip

We now prove~\eqref{jump1}. For $r, r' \in \mathbb{R}^3$, let $[r, r']$ denote the line segment between $r$ and $r'$. For all $f\in \mathcal{B}(\mathbb{R}^3)$, recalling the definition of $\nu$ from the proof of \ref{itm:A1} and using the usual bounds on $\alpha$,
\begin{align}
\mathbf{E}_{\nu}[f(R_{J_1}); J_1 < \texttt{k}] &\ge \int_{D_{\varepsilon}}\frac{{\rm d}r}{{\rm Vol}(D_{\varepsilon})}\int_V\frac{{\rm d}\upsilon}{{\rm Vol}(V)}\int_0^{\infty}{\rm d}s \,\mathbf{1}_{\{[r, r+s\upsilon]\subset D\}} \, \underline{\alpha}{\rm e}^{-\overline{\alpha} s}f(r+s\upsilon)\label{b1},
\end{align}
where $\textstyle{ {\rm Vol}(D_\varepsilon)  = \int_{D_\varepsilon}\d r}$ and $\textstyle{ {\rm Vol}(V)  = \int_{V}\d \upsilon}$.
Following a similar method to those employed in the proof of Lemma~\ref{minlem} and~\eqref{jump7} and changing first to polar coordinates via $\upsilon \mapsto (\rho, \theta, \varphi)$, followed by the substitution $u = s\rho$, and finally changing back to Cartesian coordinates via $(u, \theta, \varphi) \mapsto x$, the right-hand side of~\eqref{b1} is bounded below by
\begin{align}
C\int_{D_{\varepsilon}}{\rm d}r \int_{\mathbb{R}^3} \D x \,\mathbf{1}_{\{[r, r+x]\subset D\}} \, \frac{\underline{\alpha}{\rm e}^{-\overline{\alpha} s/{\texttt v}_{\texttt{min}}}}{|x|^2}f(r+x),
\label{b2}
\end{align}
where $C > 0$ is a constant. Making a final substitution of $x = z-r$, yields
\begin{align}
\mathbf{E}_{\nu}[f(R_{J_1}); J_1 < \texttt{k}] &\ge C\int_D{\rm d}z\mathbf{1}_{[r, z] \subset D}\frac{\underline{\alpha}{\rm e}^{-\overline{\alpha} |z-r|/{\texttt v}_{\texttt{min}}}}{|z-r|^2}f(z)\notag \\
&\ge C \frac{{\texttt v}_{\texttt{min}}^2\underline{\alpha}{\rm e}^{-\overline{\alpha} {\rm diam}(D)/\upsilon^2_{\texttt{min}}}}{({\rm diam}(D))^2} \int_D{\rm d}z\mathbf{1}_{\{[r, z] \subset D\}}f(z).
\label{b3}
\end{align}

\smallskip

For all $z \in D\backslash D_{\varepsilon}$, \ref{itm:B1} and the discussion thereafter now imply that 
\begin{equation}
\int_{D_{\varepsilon}}\mathbf{1}_{\{[r, z]\subset D\}}{\rm d}r \ge {\rm Vol}(L_z) \ge \frac{{\color{black}\gamma}}{2}(t_{\varepsilon}^2 - s_{\varepsilon}^2),
\label{outer}
\end{equation}
where $s_{\varepsilon}$ and $t_{\varepsilon}$ are defined in~\ref{itm:B2}, and $L_z$ is defined in \eqref{L_r}. On the other hand, for all $z \in D_{\varepsilon}$,
\begin{equation}
\int_{D_{\varepsilon}}\mathbf{1}_{\{[r, z] \subset D\}}{\rm d}r \ge {\rm Vol}(D_{\varepsilon} \cap B(r, \varepsilon)).
\label{inner}
\end{equation}
Since the map $z\mapsto {\rm Vol}(D_{\varepsilon}\cap B(z, \varepsilon))$ is continuous and positive on the compact set $\bar{D_{\varepsilon}}$, the latter equation is uniformly bounded below by a strictly positive constant. It then follows that for every $z \in D$, the integral
$\textstyle{
\int_{D_{\varepsilon}}\D r\mathbf{1}_{\{[r,z] \subset D\}}
}$
is bounded below by a positive constant. Using this to bound the right-hand side of~\eqref{b3} yields the result.
\end{proof}

\smallskip

We thus have proved that the conclusions of
Theorem~\ref{7CVtheoremBis} are valid under our assumptions. In order
to conclude that Theorem~\ref{CVtheorem} holds true, it remains to
prove that $\varphi$ is uniformly bounded away from 0 on each
compactly embedded subset of $D\times V$ and the existence of a
positive bounded density for the left eigenmeasure $\eta$.

\begin{lem}
\label{QSDdensity}
The right eigenfunction $\varphi$ is uniformly bounded away from $0$
on each compactly embedded subset of $D\times V$ and the probability
measure $\eta$ admits a positive density with respect to the Lebesgue
measure on $D\times V$, which corresponds to the quantity
$\tilde\varphi$ and which is uniformly bounded from above and
a.e. uniformly bounded from below on each compactly embedded subset of
$D\times V$.
\end{lem}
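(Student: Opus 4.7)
The proof proceeds by extracting each claimed property from the eigenrelations
\[
\texttt{P}^\dagger_t[\varphi]=e^{\lambda_c t}\varphi \quad \text{and} \quad \eta[\texttt{P}^\dagger_t[g]]=e^{\lambda_c t}\eta[g]
\]
furnished by Theorem~\ref{7CVtheoremBis}, combined with the two-sided transition-kernel estimates~\eqref{toprove} and~\eqref{jump7} already built to prove \ref{itm:A1} and Lemma~\ref{jumps}.

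For the uniform lower bound on $\varphi$, fix a compactly embedded $K\subset D\times V$ and choose $\varepsilon>0$ small enough that $K\subset D_\varepsilon\times V$. For $t\geq(\mathfrak{n}+1)\varepsilon/2$, the identity $\varphi(r,\upsilon)=e^{-\lambda_c t}\int\varphi(z,w)\mathbf{P}^\dagger_{(r,\upsilon)}((R_t,\Upsilon_t)\in dz\,dw)$ together with the pointwise minorization~\eqref{toprove} gives
\[
\inf_{(r,\upsilon)\in K}\varphi(r,\upsilon)\geq e^{-\lambda_c t}C_\varepsilon c_\varepsilon^k\int_{D_\varepsilon\times V}\varphi(z,w)\,dz\,dw,
\]
and the right-hand integral is strictly positive because $\varphi>0$ pointwise, a consequence of~\eqref{7die}. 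The very same manipulation applied to the eigenmeasure identity $\eta[g]=e^{-\lambda_c t}\int\texttt{P}^\dagger_t[g]\,d\eta$ yields, for any $g\geq 0$ supported in $D_\varepsilon\times V$,
\[
\eta[g]\geq e^{-\lambda_c t}C_\varepsilon c_\varepsilon^k\int_{D_\varepsilon\times V}g(z,w)\,dz\,dw,
\]
so that $\eta$ dominates a positive multiple of Lebesgue measure on $D_\varepsilon\times V$; once absolute continuity of $\eta$ is established, this delivers the a.e.\ lower bound on $\tilde\varphi$ over any compactly embedded subset.

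The principal obstacle is the absolute continuity and the global upper bound on $\tilde\varphi$. The key technical claim is the existence of constants $T,M<\infty$ such that the sub-probability measure $\mathbf{P}^\dagger_{(r,\upsilon)}((R_t,\Upsilon_t)\in\cdot\,;\,J_7\leq t)$ admits a Lebesgue density on $D\times V$ bounded by $M$, uniformly in $(r,\upsilon)\in D\times V$ and $t\geq T$. The proof of this claim closely parallels~\eqref{jump7}: one applies the strong Markov property at $J_7$, uses \eqref{jump7} together with the uniform scattering bound $\pi\leq \overline{\pi}$ to majorize the joint density of $(R_{J_7},\Upsilon_{J_7})$, and then absorbs the free-flight interval $[J_7,t]$ via a change of variables of the polar-to-Cartesian type employed in Lemma~\ref{minlem}. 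The repeated Young inequality convolutions of the kernel $g(x)=e^{-\underline{\alpha}|x|/{\texttt v}_{\texttt{max}}}/|x|^2$ keep the result in $L^\infty(\mathbb{R}^3)$ just as in Lemma~\ref{jumps}.

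Granted this majorization, we close the argument as follows: in $\eta[g]=e^{-\lambda_c t}\int\mathbf{E}^\dagger_{(r,\upsilon)}[g(R_t,\Upsilon_t)]\,\eta(dr,d\upsilon)$ the contribution from $\{J_7\leq t\}$ is bounded by $e^{-\lambda_c t}M\|g\|_1$, while the contribution from $\{J_7>t\}$ is bounded by $e^{-\lambda_c t}\|g\|_\infty C(1+t^6)e^{-\underline{\alpha}t}$, using that inter-jump times under $\mathbf{P}^\dagger$ are stochastically smaller than $\exp(\underline{\alpha})$. Combined with the spectral decay~\eqref{7spectralexpsgp} applied to reduce the $\|g\|_\infty$ contribution by iteration, this forces $\eta[g]\leq C'\|g\|_1$ for all non-negative $g\in L^{+}_\infty(D\times V)$, hence the existence of a uniformly bounded density $\tilde\varphi\in L^+_\infty(D\times V)$, completing the proof.
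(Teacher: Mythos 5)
Your treatment of the two lower bounds (on $\varphi$ and on $\tilde\varphi$) matches the paper's: apply the eigenrelations together with the minorization \eqref{toprove} (and \eqref{iteration}), which is exactly what the paper does. But the upper bound on $\tilde\varphi$, which is the genuinely hard part, has two gaps.

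First, you assert that $\mathbf{P}^{\dagger}_{(r,\upsilon)}((R_t,\Upsilon_t)\in\cdot\,;\,J_7\le t)$ admits a uniformly bounded Lebesgue density at a \emph{fixed} time $t$, and that this ``closely parallels'' Lemma~\ref{jumps}. It does not parallel it so closely. In Lemma~\ref{jumps} one studies $R_{J_7}$, so every displacement segment $s_i\upsilon_{i-1}$ comes with its own free length $s_i$, and the polar-to-Cartesian change of variables turns each $(s_i,\upsilon_{i-1})$ pair into a genuine convolution with $g(x)=e^{-\underline\alpha|x|/{\texttt v}_{\texttt{max}}}/|x|^2$. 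At a fixed time $t$ the \emph{last} free-flight length $t-J_K$ (with $K$ the number of jumps by $t$) is not a free variable: it is determined by $s_1,\dotsc,s_K$ through the constraint $s_1+\dotsb+s_K+s_{K+1}=t$, so the final displacement $\upsilon_K(t-J_K)$ cannot be absorbed the same way. This is precisely why the paper's proof randomizes the horizon: taking $T$ uniform on $[\underline T,\bar T]$ and integrating over $t$ restores the missing degree of freedom via the substitution $(u_1,\dotsc,u_{k+1})=(s_1,\dotsc,s_k,t-s_1-\dotsb-s_k)$, after which the $k+1$ segments are exchangeable and Young's inequality applies cleanly. Your suggestion to ``absorb the free-flight interval $[J_7,t]$ via a change of variables of the polar-to-Cartesian type'' does not explain how to handle this constraint, and no such clean change of variables exists at fixed $t$.

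Second, your treatment of the event $\{J_7>t\}$ is both incorrect and unnecessary. You bound its contribution by $e^{-\lambda_c t}\lVert g\rVert_\infty C(1+t^6)e^{-\underline\alpha t}$. Since $\lambda_c<0$, the prefactor $e^{-\lambda_c t}$ grows, and there is no reason that $-\lambda_c<\underline\alpha$, so this product need not go to zero; the vague appeal to ``iterating'' \eqref{7spectralexpsgp} to rescue this is not a proof. The paper sidesteps the issue entirely by the deterministic observation that on $\{t<\texttt{k}\}$ the particle remains in $D$, each free-flight segment lasts at most ${\rm diam}(D)/{\texttt v}_{\texttt{min}}$, and therefore $J_7\le 7\,{\rm diam}(D)/{\texttt v}_{\texttt{min}}\le t$ automatically once $t$ (or $\underline T$) is taken at least $7\,{\rm diam}(D)/{\texttt v}_{\texttt{min}}$. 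There is then nothing left over $\{J_7>t\}$ to estimate. You should use this observation rather than the inter-jump tail bound.
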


\begin{proof}
  For all $\varepsilon>0$, we deduce from the eigenfunction property
  of $\varphi$ (cf. Theorem~\ref{7CVtheoremBis}) and
  from~\eqref{toprove} that there exist a time $t_\varepsilon>0$ and a
  constant $\tilde{C}_\varepsilon>0$ such that
  \[
    \varphi(r,\upsilon)={\rm e}^{-\lambda_c t_\varepsilon}
    \texttt{P}_{t_\varepsilon}[\varphi](r,\upsilon) \geq {\rm e}^{-\lambda t_\varepsilon}
    \tilde{C}_\varepsilon \int_{D_\varepsilon \times V} \varphi(z,w)
    \d z \d w>0,
  \]
  for all $(r,\upsilon)\in D_\varepsilon\times V$.  It follows that
  $\varphi$ is uniformly bounded away from 0 on each compactly
  embedded domain of $D\times V$.

\smallskip

  Using the same notations as in the proof of Lemma~\ref{jumps}, we
  consider the neutron transport random walk
  $(\hat{R}_t, \hat{\Upsilon}_t)$ in $\hat{D} = \mathbb{R}^3$, coupled
  with $(R, \Upsilon)$ such that $\hat{R}_t = R_t$ and
  $\hat{\Upsilon}_t = \Upsilon_t$ for all $t < \texttt{k}$. We also
  denote by $\hat{J}_1 < \hat{J}_2 < \dots$ the jump times of
  $(\hat{\Upsilon}_t)_{t\geq 0}$. Let $T\geq 0$ be a random time
  independent of $(\hat R,\hat \Upsilon)$ with uniform law on
  $[\underline T,\bar T]$, where $\underline T<\bar T$ are fixed and
  $\underline T \ge {7{\rm diam}(D)}/{{\texttt v}_{\texttt{min}}}$.
  We first prove that the law of $(\hat{R}_T, \hat{\Upsilon}_T)$ after
  the $7^{\text{th}}$ jump admits a uniformly bounded density with
  respect to the Lebesgue measure. We conclude by using the coupling
  with $(R,\Upsilon)$ and the quasi-stationary property of $\eta$ in \eqref{eta}.

  \smallskip
  
 For all $k\geq 7$ and for any positive, bounded and measurable
  function $f$ vanishing outside of $D\times V$, we have
  \begin{align*}
    \mathbf{E}&[f(\hat R_T, \hat \Upsilon_T)\mathbf{1}_{\{\hat J_k \le T < \hat J_{k+1}\}}\mid \hat R_0,\hat \Upsilon_0, T] \\
                                          & = \mathbf{E}[f(\hat R_0+\hat J_1 \hat \Upsilon_0+\cdots + \hat J_k \hat \Upsilon_{k-1} + (T-\hat J_1-\cdots-\hat J_k)\hat \Upsilon_k,\hat\Upsilon_k)\mathbf{1}_{\{\hat J_k \le T < \hat J_{k+1}\}}\mid  \hat R_0,\hat \Upsilon_0, T]\\
                                  &  = \int_0^T \D s_1\, \alpha(\hat R_0+\upsilon_0s_1,\upsilon_0){\rm e}^{-\int_0^{s_1}\alpha(\hat R_0+\upsilon_0 u, \upsilon_0)\D u}
    \\
                                  &\quad\quad \times \int_V\D \upsilon_1 \pi(r_0+\upsilon_0 s_1, \upsilon_0, \upsilon_1)  \times \int_0^{T-s_1} \D s_2 \, \alpha(\hat R_0+\upsilon_0 s_1+\upsilon_1 s_2, \upsilon_1){\rm e}^{-\int_0^{s_2}\alpha(\hat R_0+\upsilon_0 s_1+\upsilon_1 u, \upsilon_1)\D u}\\
                                  &\quad\quad\times \cdots\\
                                  &\quad\quad\times \int_V \D\upsilon_{k-1}  \pi(\hat R_0+\upsilon_0 s_1+\cdots+\upsilon_{k-2} s_{k-1}, \upsilon_{k-2}, \upsilon_{k-1})\\
                                  &\quad\quad\times \int_0^{T-s_1-\cdots-s_{k-1}} \D s_k \, \alpha(\hat R_0+\upsilon_0 s_1+\cdots+\upsilon_{k-1} s_{k},\upsilon_{k-1})\\
                                  &\hspace{5cm} {\rm e}^{-\int_0^{s_k} \alpha(\hat R_0+\upsilon_0 s_1+\cdots+\upsilon_{k-2} s_{k-1} +\upsilon_{k-1} u, \upsilon_{k-1})\D u  }\\
                                  &\quad\quad\times \int_V \D \upsilon_k \pi(\hat R_0+\upsilon_0 s_1+\cdots+\upsilon_{k-1} s_{k}, \upsilon_{k-1}, \upsilon_{k})\\
                                  &\quad\quad\times {\rm e}^{-\int_0^{T-s_1-\cdots-s_k} \alpha(\hat R_0+\upsilon_0 s_1+\cdots+\upsilon_{k-1} s_{k}+\upsilon_k u,\upsilon_k)\D u}\\
                                  &\hspace{5cm}\times f(\hat R_0+\upsilon_0 s_1+\cdots+\upsilon_{k-1} s_{k}+\upsilon_k (t-s_1-\cdots-s_k), \upsilon_k).\\
  \end{align*}
  Henceforth
  \begin{align*}
  &  \mathbf{E}[f(\hat R_T, \hat \Upsilon_T)\mathbf{1}_{\{\hat J_k \le T < \hat J_{k+1}\}}\mid  \hat R_0,\hat \Upsilon_0, T] \\
    &\leq \bar\alpha^k \bar\pi^k {\rm e}^{-T \underline{\alpha}} \int_0^T \D s_1\int_V\D \upsilon_1  \cdots \int_0^{T-s_1-\cdots-s_{k-1}} \D s_k \int_V \D \upsilon_k\\
  & \hspace{2cm} \times f(\hat R_0+\upsilon_0 s_1+\cdots+\upsilon_{k-1} s_{k}+\upsilon_k (T-s_1-\cdots-s_k), \upsilon_k).
  \end{align*}
 Taking the expectation with respect to $T$,  we obtain
  \begin{align*}
   & \mathbf{E}[f(\hat R_T, \hat \Upsilon_T)\mathbf{1}_{\{\hat J_k \le T < \hat J_{k+1}\}}\mid  \hat R_0,\hat \Upsilon_0] \\
   &\leq \frac{\bar\alpha^k \bar\pi^k}{\bar T} \int_{0}^{\bar T} \D t \int_0^t \D s_1\int_V\D \upsilon_1  \cdots \int_0^{t-s_1-\cdots-s_{k-1}} \D s_k \int_V \D \upsilon_k\\
   &\hspace{2cm} \times f(\hat R_0+\upsilon_0 s_1+\cdots+\upsilon_{k-1} s_{k}+\upsilon_k (t-s_1-\cdots-s_k), \upsilon_k).
  \end{align*}
  Using the change of variable $(u_1,\ldots,u_k,u_{k+1})=(s_1,\ldots,s_k,t-s_1-\cdots-s_k)$ yields
  \begin{align*}
   & \mathbf{E}[f(\hat R_T, \hat \Upsilon_T)\mathbf{1}_{\{\hat J_k \le T < \hat J_{k+1}\}}\mid  \hat R_0,\hat \Upsilon_0] \\
   &\leq \frac{\bar\alpha^k \bar\pi^k}{\bar T} \int_{[0,\bar T]^{k+1}} \D u \, \mathbf{1}_{0\leq u_1+\cdots+u_{k+1}\leq \bar T} \int_{V^k}\D \upsilon \\
  &\hspace{2cm}      \times f(\hat R_0+\upsilon_0 u_1+\cdots+\upsilon_{k-1} u_{k}+\upsilon_k u_{k+1}, \upsilon_k).
  \end{align*}
  The same approach as in Lemma~\ref{jumps} shows that there exists a constant $C>0$ (which does not depend on $\hat R_0$ nor on $\hat \Upsilon_0$) such that, for all measurable function $g:\mathbb{R}^3\rightarrow [0,\infty)$,
  \begin{align*}
    \int_{[0,\bar T]^7} \D u \int_{V^6} \D \upsilon \, g(\hat R_0+\hat\Upsilon_0 u_1+\cdots+\upsilon_6 u_7) \leq C \int_{\R^d} \D x g(x).  
  \end{align*}
  Hence,
  \begin{align*}
  \mathbf{E}[f(\hat R_T, \hat \Upsilon_T) & \mathbf{1}_{\{\hat J_k \le
      T < \hat J_{k+1}\}}\mid   \hat R_0,\hat \Upsilon_0]\\
  & \leq \frac{C \bar\alpha^k \bar\pi^k}{\bar T}  \int_{[0,\bar T]^{k+1-7}} \D u \, \mathbf{1}_{0\leq u_8+\cdots+u_{k+1}\leq \bar T} \int_{V^{k-6}}\D \upsilon  \\
                              &\hspace{4cm} \times \int_{\R^3} \D x f(x+\upsilon_7\,u_8+\cdots+\upsilon_k u_{k+1}, \upsilon_k)\\
                              &= \frac{C \bar\alpha^k \bar\pi^k}{\bar T}  \int_{[0,\bar T]^{k+1-7}} \D u \, \mathbf{1}_{0\leq u_8+\cdots+u_{k+1}\leq \bar T} \int_{V^{k-6}}\D \upsilon \int_{\R^3} \D y f(y,\upsilon_k)\\
                              &= C \bar\alpha^k \bar\pi^k {\rm Vol}(V)^{k-8} \frac{\bar T^{k+1-8}}{(k+1-7)!} \int_D \D y \int_V \D \upsilon_k f(y,\upsilon_k)
  \end{align*}
  where we used the change of variable
  $y=x+\upsilon_7\,u_8+\cdots+\upsilon_k u_{k+1}$ and the fact that $f$ vanishes outside $D\times V$. Summing over $k\geq 7$, we deduce that there exists a constant $C'>0$ (which only depends on $C,\bar\alpha,\bar\pi$ and $\bar T$) such that
  \begin{equation*}
    \mathbf{E}[f(\hat R_T, \hat \Upsilon_T)\mathbf{1}_{\{\hat J_7 \le
    T \}}\mid   \hat R_0,\hat \Upsilon_0]
                \leq C' \int_D \D y \int_V \D \upsilon \, f(y,\upsilon).  
  \end{equation*}

  Similarly as in the proof of~\ref{itm:A2}, we chose
  $\underline T \ge {7{\rm diam}(D)}/{{\texttt v}_{\texttt{min}}}$, so that,
  on the event $\{\texttt{k} > T\}$, we have
  $J_7 \le {7{\rm diam}(D)}/{{\texttt v}_{\texttt{min}}}\leq T$ almost
  surely. Hence, we obtain that, for any $(r_0, \upsilon_0)\in D\times V$,
  \begin{align*}
    \edag_{(r_0, \upsilon_0)}[f(R_T,\Upsilon_T) ; T< \texttt{k}]&=\edag_{(r_0, \upsilon_0)}[f(R_T,\Upsilon_T) ; T< \texttt{k},J_7\leq T] \\
                                                                &\leq \mathbf{E}_{(r_0,\upsilon_0)}[f(\hat R_T,\hat\Upsilon_T);\hat J_7\leq  T]\\
                                                                &\leq C'\int_D \D y \int_V \D \upsilon \, f(y,\upsilon).
  \end{align*}
  Integrating with respect to $\eta$ and using the quasi-stationary property \eqref{eta} and Fubini's Theorem (recall
  that $T$ and the process $(R,\Upsilon)$ are independent), we obtain
  \begin{align}
    \frac{1}{\bar T - \underline T}\int_{\underline T}^{\bar T}\D t\, e^{\lambda_c t} \eta[f]
    &=\frac{1}{\bar T - \underline T}\int_{\underline T}^{\bar T} \D t\,\edag_{\eta}[f(R_t,\Upsilon_t) ; t< \texttt{k}]\notag\\
    &= \edag_{\eta}[f(R_T,\Upsilon_T) ; T< \texttt{k}]\notag\\
    &\leq C'\int_D \D y \int_V \D \upsilon \, f(y,\upsilon).
    \label{tildephibounded}
  \end{align}
 Since $f$ was chosen arbitrarily,  this proves that $\eta$ admits a uniformly bounded density (from above) with respect to the Lebesgue measure on $D\times V$.
\smallskip


  Finally, using the quasi-stationarity of $\eta$ \eqref{eta} and
integrating inequality~\eqref{iteration} with respect to $\eta$
implies that (here the time $t$ and the constants
$k,C_\varepsilon, c_{\varepsilon}$ depend on $\varepsilon$ as in
inequality \eqref{toprove}), for all bounded measurable functions $f$ on
$D\times E$,
\begin{align*}
  e^{\lambda_c t} \int_{D\times V} f(x)\eta( \D x)&= \edag_{\eta}[f(R_t,\Upsilon_t); t< \texttt{k}]\\
                                                                                 &\geq \eta(D_{\varepsilon}\times V)\,C_{\varepsilon} c_{\varepsilon}^k \int_{D_\varepsilon\times V}f(z,w)\,\D z \,\D w .
\end{align*}
This implies that $\tilde\varphi$ is a.e. lower bounded by
$e^{-\lambda_c t} \eta(D_{\varepsilon}\times
V)\,C_{\varepsilon} c_{\varepsilon}^k$ on $D_{\varepsilon}\times
V$. Since this inequality can be proved for any $\varepsilon>0$ small
enough, one deduces that, on any subset $D_\varepsilon\times V$ with
$\varepsilon>0$ and hence on any compactly embedded subset of $D\times V$,
$\tilde \varphi$ is a.e. uniformly bounded away from zero.
\end{proof}

\section{Proof of Theorem \ref{pathspine}}\label{spineproof}
There are three  main steps to the proof. The first is to characterise the law of transitions of  the Markov process $(X, {\mathbb{P}}^\varphi)$, defined in the change of measure \eqref{mgCOM}; note that the latter ensures the Markov property is preserved. The second step is to show that they agree  with those of $(X^\varphi, \tilde{\mathbb{P}}^\varphi)$. The third step is to show that  $(X^\varphi, \tilde{\mathbb{P}}^\varphi)$ is Markovian. Together these three imply the statement of the theorem.

\bigskip

{\it Step 1}. Next we look at the multiplicative semigroup which characterises uniquely the transitions of $(X^\varphi, \mathbb{P}^\varphi)$ (cf. \cite{INW1, INW2, INW3})
\begin{equation}
\label{uphi}
u^\varphi_t[g](r,\upsilon): 
= \mathbb{E}_{\delta_{(r,\upsilon)}}\left[W_t \prod_{i = 1}^{N_t}g (R_t^i, \Upsilon^i_t)\right]
=\mathbb{E}_{\delta_{(r,\upsilon)}}\left[
{\rm e}^{-\lambda_* t}\frac{\langle \varphi, X_t\rangle}{\varphi(r,\upsilon)} {\rm e}^{\langle \log g, X_t\rangle }\right]
\end{equation}
for $t\geq 0$ and $g\in L^+_\infty(D\times V)$ which is uniformly bounded by unity. Note, we keep to the convention that an empty product is understood as 1, however we also define the empty inner product as zero (corresponding to all functions scoring zero when particles arrive at the cemetery state $\{\dagger\}$). As such, if we are to extend the domain of test functions in the product to include the cemetery state $\{\dagger\}$, we need to insist on the default value $g(\{\dagger\}) = 1$;  see \cite{INW1, INW2, INW3}.

\smallskip
We start in the usual way by splitting the expectation in the second equality of \eqref{uphi} according to whether a scattering or fission event occurs. (The reader may wish to recall the role of the quantities $\sigma_{\texttt{s}}$, $\sigma_{\texttt f}$, $\sigma = \sigma_{\texttt s} +\sigma_{\texttt f}$, $\pi_{\texttt s}$ and $\sigma_{\texttt f}$ in \eqref{bNTE}). We get
\begin{align}
u^\varphi_t[g](r,\upsilon)
& = g(r+\upsilon(t \wedge \kappa^D_{(r,\upsilon)}), \upsilon)\frac{\varphi(r+\upsilon t, \upsilon)}{\varphi(r,\upsilon)}{\rm e}^{-\int_0^t \lambda_* + \sigma(r+\upsilon s, \upsilon)\d s}\mathbf{1}_{\{t<\kappa^D_{(r,\upsilon)}\}}\notag\\
&+\int_0^{t\wedge \kappa^D_{(r,\upsilon)} }
\sigma_{\texttt{s}}(r+\upsilon s, \upsilon){\rm e}^{-\int_0^s \lambda_* + \sigma(r+\upsilon \ell, \upsilon)\d \ell}
\frac{\varphi(r+\upsilon s,\upsilon)}{\varphi(r,\upsilon)}\notag\\
&\hspace{5cm}\int_V u^\varphi_{t-s}[g](r+\upsilon s, \upsilon') \frac{\varphi(r+\upsilon s, \upsilon')}{\varphi(r+\upsilon s, \upsilon)}\pi_{\texttt s}(r+\upsilon s, \upsilon, \upsilon')\d\upsilon'\notag\\
&+\int_0^{t\wedge \kappa^D_{(r,\upsilon)} }
\sigma_{\texttt{f}}(r+\upsilon s, \upsilon){\rm e}^{-\int_0^s \lambda_* + \sigma(r+\upsilon \ell, \upsilon)\d \ell}
\frac{\varphi(r+\upsilon s,\upsilon)}{\varphi(r,\upsilon)}\notag\\
&\hspace{1.5cm}
 {\mathcal E}_{(r+\upsilon s, \upsilon)}\otimes\mathbb{E}_{\delta_{(r,\upsilon)}}\left[\sum_{i=1}^N 
\frac{ \varphi(r+\upsilon s, \upsilon_i) }{\varphi(r+\upsilon s, \upsilon)} W^{ i}_{t-s}(r+\upsilon s, \upsilon_i)
 \prod_{j = 1}^N {\rm e}^{\langle \log g , X^j_{t-s}(r+\upsilon s, \upsilon_i)\rangle}\right],
\label{2factsneeded}
\end{align}
where, for $r\in D$, $v\in V$, $W^{ i}(r,\upsilon)$ and $X^i(r,\upsilon)$ are independent copies of the pair  $W$ and $X$ under $\mathbb{P}_{\delta_{(r, \upsilon)}}$. Note that the first term on the right-hand side of~\eqref{2factsneeded} contains includes $g(r+\upsilon(t \wedge \kappa^D_{(r,\upsilon)}), \upsilon)$ to account for the fact that $g(\{\dagger\}) = 1$. Before developing the right-hand side above any further, we need to make two additional observations and to introduce some more notation.

\smallskip

The first observation is that, since $W$ is a martingale, by sampling at the time of the first scattering event, fission event or when it leaves the domain $D$, whichever happens first, thanks to Doob's Optional Sampling Theorem, its mean must remain equal to 1 and we get the functional equation
\begin{align}
&\varphi(r,\upsilon)\notag\\
& = \varphi(r+\upsilon t, \upsilon){\rm e}^{-\int_0^t \lambda_* + \sigma(r+\upsilon s, \upsilon)\d s}\mathbf{1}_{\{t<\kappa^D_{r,\upsilon}\}}\notag\\
&+\int_0^{t\wedge \kappa^D_{r,\upsilon}} {\rm e}^{-\int_0^s \lambda_* +\sigma(r+\upsilon \ell, \upsilon)\d \ell}
\sigma_{\texttt f}(r+\upsilon s, \upsilon) \int_V\frac{ \varphi (r+\upsilon s, \upsilon')}{\varphi(r+\upsilon s, \upsilon)}\pi_{\texttt f}(r+\upsilon s,\upsilon, \upsilon')\varphi(r+\upsilon s, \upsilon)\d s\notag\\
&+\int_0^{t\wedge \kappa^D_{r,\upsilon}} {\rm e}^{-\int_0^s\lambda_* + \sigma(r+\upsilon \ell, \upsilon)\d \ell}
\sigma_{\texttt s}(r+\upsilon s, \upsilon) \int_V \frac{\varphi (r+\upsilon s, \upsilon')}{\varphi(r+\upsilon s, \upsilon)} \pi_{\texttt s}(r+\upsilon s,\upsilon, \upsilon')\varphi(r+\upsilon s, \upsilon)\d s
\label{useDynkin}
\end{align}
for $r\in D$, $\upsilon\in V$. Now appealing to Lemma 1.2, Chapter 4 in \cite{Dynkin2}, to treat the last two terms of \eqref{useDynkin} as potentials, with a little bit of algebra  we can otherwise write the above as
\[
\varphi(r,\upsilon) = \varphi(r+\upsilon t, \upsilon)\exp\left\{\int_0^t \dfrac{(\bS + \bF-\lambda_* {\texttt I})\varphi(r+\upsilon s, \upsilon)}{\varphi(r+\upsilon s, \upsilon)}\d s\right\},
\]
for $t<\kappa^D_{r,\upsilon}$, where $\texttt{I}$ is the identity operator,
which is to say, for $t<\kappa^D_{r,\upsilon}$,
\begin{align}
\frac{\varphi(r+\upsilon t, \upsilon)}{\varphi(r,\upsilon)} &{\rm e}^{-\int_0^t \lambda_* + \sigma(r+\upsilon s, \upsilon)\d s} 
\notag\\
& = 
\exp\left\{-\int_0^t \dfrac{(\bS + \bF + \sigma\texttt{I})\varphi(r+\upsilon s, \upsilon)}{\varphi(r+\upsilon s, \upsilon)}\d s\right\}.
\label{obs1}
\end{align}
\smallskip

Our second observation pertains to the manipulation of the expectation on the right-hand side of \eqref{2factsneeded}. Define for $g\in L^+_{\infty}(D\times V)$,  $(r,\upsilon)\in D\times V$ and $t\geq 0$,
\begin{equation}
u_t[g](r,\upsilon): 
= \mathbb{E}_{\delta_{(r,\upsilon)}}\left[ \prod_{i = 1}^{N_t}g (R_t^i, \Upsilon^i_t)\right]
\label{ut}
\end{equation}

We have that for all $(r,\upsilon) \in D\times V$,
\begin{align}
&\sigma_{\texttt{f}}(r, \upsilon){\mathcal E}_{(r+\upsilon s, \upsilon)}\otimes\mathbb{E}_{\delta_{(r,\upsilon)}}\left[\sum_{i=1}^N 
\frac{ \varphi(r, \upsilon_i) }{\varphi(r, \upsilon)} W^{ i}_{t-s}(r, \upsilon_i)
 \prod_{j = 1}^N {\rm e}^{\langle \log g , X^j_{t-s}(r, \upsilon_i)\rangle}\right]\notag\\
&= \sigma_{\texttt{f}}(r, \upsilon){\mathcal E}_{(r+\upsilon s, \upsilon)}\otimes\mathbb{E}_{\delta_{(r,\upsilon)}}\Bigg[\frac{\langle\varphi, \mathcal{Z}\rangle}{ \varphi(r, \upsilon)}\sum_{i=1}^N \frac{ \varphi(r, \upsilon_i)}{\langle\varphi, \mathcal{Z}\rangle}
W^{ i}_{t-s}(r, \upsilon_i)
{\rm e}^{\langle \log g , X^i_{t-s}(r, \upsilon_i)\rangle}\prod_{\stackrel{j =1}{i\neq j}}^N {\rm e}^{\langle \log g , X^j_{t-s}(r, \upsilon_i)\rangle}\Bigg]\notag\\
&= \sigma_{\texttt{f}}(r, \upsilon)\frac{{\mathcal E}_{(r,\upsilon)}[\langle\varphi, \mathcal{Z}\rangle]}{ \varphi(r, \upsilon)}
 {\mathcal E}_{(r, \upsilon)}\Bigg[\frac{\langle\varphi, \mathcal{Z}\rangle}{{\mathcal E}_{(r,\upsilon)}[\langle\varphi, \mathcal{Z}\rangle]}\sum_{i=1}^N \frac{ \varphi(r, \upsilon_i)}{\langle\varphi, \mathcal{Z}\rangle}
u^\varphi_{t-s}[g](r, \upsilon_i)\prod_{\stackrel{j =1}{i\neq j}}^N u_{t-s}[g](r, \upsilon_j)\Bigg]\notag\\
&=
G^\varphi_{\texttt f}[u_{t-s}^\varphi[g], u_{t-s}[g]](r,\upsilon) +  \frac{( \bF +\sigma_{\texttt f}{\texttt I})\varphi(r,\upsilon)}{\varphi(r,\upsilon)}u_{t-s}^\varphi[g](r,\upsilon)
,
\label{obs2}
\end{align}
where in the penultimate equality we have taken expectations conditional on the fission event and
\begin{align}
G^\varphi_{\texttt f}[f, g](r,\upsilon)&: = \frac{( \bF +\sigma_{\texttt f}{\texttt I})\varphi(r,\upsilon)}{\varphi(r,\upsilon)}
{\mathcal E}^\varphi_{(r,\upsilon)}\Bigg[\sum_{i=1}^N \frac{ \varphi(r, \upsilon_i)}{\langle\varphi, \mathcal{Z}\rangle}
f(r, \upsilon_i)\prod_{\stackrel{j =1}{i\neq j}}^N g(r, \upsilon_j) \Bigg]\notag\\
&\hspace{2cm}- \frac{( \bF +\sigma_{\texttt f}{\texttt I})\varphi(r,\upsilon)}{\varphi(r,\upsilon)}f(r, \upsilon)
\label{Gphif}
\end{align}
for $f,g\in L^+_\infty(D\times V)$, which are uniformly bounded by unity, and for $r\in D$, $\upsilon\in V$, where we recall that ${\mathcal P}^\varphi_{(r,\upsilon)}$ was defined in \eqref{pointprocessCOM}.
Note in particular that
\begin{equation}
\frac{{\mathcal E}_{(r,\upsilon)}[\langle\varphi, \mathcal{Z}\rangle]}{\varphi(r,\upsilon)} = 
\int_V \frac{\varphi(r, \upsilon')}{\varphi(r,\upsilon)}\pi_{\texttt f}(r,\upsilon,\upsilon')\d \upsilon' =\frac{( \bF +\sigma_{\texttt f}{\texttt I})\varphi(r,\upsilon)}{\sigma_{\texttt f}(r,\upsilon)\varphi(r,\upsilon)}.
\label{<Z>}
\end{equation}
We will also make use of the notation 
\begin{equation}
G_{\texttt{f}}[f](r,\upsilon) =  \sigma_{\texttt f}(r,\upsilon){\mathcal E}_{(r,\upsilon)}\Bigg[\prod_{j =1}^N g(r, \upsilon_j) - g(r,\upsilon)\Bigg], 
\label{Gf}
\end{equation}
for $r\in D$, $\upsilon \in V$ and $g\in L^+_\infty(D\times V)$, which is uniformly bounded by unity. Recall that  the empty product in the definition \eqref{ut} is defined as unity.
\smallskip

In a similar manner to \eqref{2factsneeded} we can break the expectation over the event of scattering or fission in \eqref{ut}, which defines of $u_t[g]$, to see that 
the operator  $G_{\texttt f}$ appears in the decomposition
\begin{equation}
u_t[g] (r,\upsilon) = \hat\U_t[g] +\int_0^t\U_s[\bS u_{t-s}[g] + G_{\texttt{f}}[u_{t-s}[g]]\d s, \qquad t\geq 0,
\label{ut}
\end{equation}
for $g\in L^+_\infty(D\times V)$, which is uniformly bounded by unity. Here,  we have adjusted the definition of the semigroup $\U$ to 
\begin{equation}
\hat\U_t[g](r,\upsilon) = g(r+\upsilon (t\wedge \kappa^D_{r,\upsilon}), \upsilon), \qquad t\geq 0, r\in D, \upsilon\in V.
\label{tildeU}
\end{equation}
\smallskip

Now returning to \eqref{2factsneeded} with the above observations and definitions  in hand, whilst again appealing to Lemma 1.2, Chapter 4 in \cite{Dynkin2}, we have 
\begin{align}
u^\varphi_t[g](r,\upsilon)&=
g(r+\upsilon (t\wedge \kappa^D_{r,\upsilon}), \upsilon)\notag\\
&+\int_0^{t\wedge \kappa^D_{(r,\upsilon)}} \sigma_{\texttt{s}}(r+\upsilon s, \upsilon)
\int_V  u_{t-s}^\varphi[g](r+\upsilon s, \upsilon')\frac{\varphi(r+\upsilon s, \upsilon')}{\varphi(r+\upsilon s, \upsilon)} \pi_{\texttt s}(r+\upsilon s, \upsilon, \upsilon')\d\upsilon'\d s\notag\\
&+\int_0^{t\wedge \kappa^D_{(r,\upsilon)}}     
G^\varphi_{\texttt f}[u_{t-s}^\varphi[g], u_{t-s}[g]](r+\upsilon s, \upsilon)+  \frac{( \bF +\sigma_{\texttt f}{\texttt I})\varphi(r+\upsilon s,\upsilon)}{\varphi(r+\upsilon s,\upsilon)}u_{t-s}^\varphi[g](r+\upsilon s,\upsilon)\d s\notag\\
 &-\int_0^{t\wedge \kappa^D_{(r,\upsilon)}}    \dfrac{(\bS + \bF +\sigma\texttt{I})\varphi(r+\upsilon s, \upsilon)}{\varphi(r+\upsilon s, \upsilon)}
 u^\varphi_{t-s}[g](r+\upsilon s, \upsilon)\d s\label{spinesg2}\\
 &= \hat\U_t[g] 
 +\int_0^t \U_s\left[
\bS_\varphi u_{t-s}^\varphi[g]
\right]
\d s+\int_0^t   \U_s\bigg[
G^\varphi_{\texttt f}[u^\varphi_{t-s}[g], u_{t-s}[g]]
\bigg]\d s,
\label{spinesg}
\end{align}
where
where
\begin{equation*}
{\bS}_\varphi f(r,v) : = \int_V [f(r,\upsilon') -f(r,\upsilon)]\sigma_{\texttt s}(r,\upsilon)\frac{\varphi(r,\upsilon')}{\varphi(r,\upsilon)}\pi_{\texttt s}(r,\upsilon,\upsilon')\d \upsilon'
\end{equation*}
on $D\times V$ and otherwise equal to zero,

\bigskip

{\it Step 2}.  Define 
\begin{equation}
\tilde{u}^\varphi_t[g](r,\upsilon)= \tilde{\mathbb{E}}^\varphi_{\delta_{(r,\upsilon)}}\left[\prod_{i = 1}^{N_t}g (R_t^i, \Upsilon^i_t)\right], \qquad t\geq 0,
\label{uttild}
\end{equation}
for $g\in L^+_\infty(D\times V)$,
where $\{(R_t^i, \Upsilon^i_t): i = 1,\cdots, N_t\}$ are the physical configurations of the particles alive in the system at time $t\geq 0$ in $X^\varphi$. 

\smallskip

By conditioning $\tilde{u}^\varphi_t$ on  the first time a scattering of fission event occurs, it is a straightforward exercise to show that it also solves \eqref{spinesg}. For the sake of brevity, we leave this as an exercise to the reader as the arguments are similar to those mentioned previously. In order to show that \eqref{spinesg} has a unique solution, we consider $v_t^\varphi[g] \coloneqq \varphi u_t^\varphi[g]$ and $\tilde{v}_t^\varphi[g] \coloneqq \varphi \tilde{u}_t^\varphi[g]$. Since $u_t^\varphi$ and $\tilde{u}_t^\varphi$ both satisfy \eqref{spinesg2}, applying ~\cite[Lemma 1.2, Chapter 4]{Dynkin2} along with \eqref{obs1}, it is straightforward to show that $v_t^\varphi$ and $\tilde{v}_t^\varphi$ both satisfy
\begin{align}
v_t[g](r, \upsilon) &= g(r+\upsilon(t \wedge \kappa^D_{(r,\upsilon)}), \upsilon)\varphi(r+\upsilon t, \upsilon){\rm e}^{-\int_0^t\lambda_* + \sigma(r+\upsilon s, \upsilon){\rm d}s} \mathbf{1}_{\{t < \kappa^D_{(r, \upsilon)}\}}\notag\\
&\quad + \int_0^{t \wedge \kappa^D_{(r, \upsilon)}}{\rm e}^{-\int_0^s\lambda_* + \sigma(r+\upsilon l, \upsilon){\rm d}l}{\texttt U}_s[(\bS + \sigma_\texttt{s})v_{t-s}[g]]{\rm d}s \notag\\
&\quad+ \int_0^{t \wedge \kappa^D_{(r, \upsilon)})}{\rm e}^{-\int_0^s\lambda_* + \sigma(r+\upsilon l, \upsilon){\rm d}l}{\texttt U}_s[\tilde{G}_\texttt{f}[v_{t-s}[g], u_{t-s}[g]] + \sigma_\texttt{f}v_{t-s}[g]]{\rm d}s, \label{uniquesg}
\end{align}
where 
\begin{equation*}
\tilde{G}_\texttt{f}[f, g](r, \upsilon) = \sigma_\texttt{f}(r, \upsilon)\left\{\mathcal{E}_{(r, \upsilon)}\left[\sum_{i=1}^Nf(r, \upsilon_i)\prod_{\stackrel{j=1}{j\neq i}}^Ng(r, \upsilon_j) \right] - f(r, \upsilon)\right\}.
\end{equation*}
Due to the assumptions (H1) and (H4), an application of Gr\"onwall's inequality implies uniqueness of \eqref{uniquesg}, which in turn implies uniqueness of \eqref{spinesg}. We leave this as an exercise to the reader as it is a relatively standard computation and very similar to the calculations given in \cite{MultiNTE}.

\bigskip

{\it Step 3}.  We start by noting that the joint process $(X^\varphi, (R^\varphi, \Upsilon^\varphi))$ is, by construction, Markovian under $\tilde{\mathbb{P}}^\varphi$, we thus need to show that the marginalisation of the coupled system to just $X^\varphi$ retains the Markov property. We do this by showing that  for $f\in L^+_\infty(D\times V)$, $\mu \in \mathcal{M}(D\times V)$ and $(r,\upsilon)\in D\times V$,
\begin{equation}
\tilde{\mathbb{E}}^\varphi_{\mu}\left[f(R^\varphi_t, \Upsilon^\varphi_t)| X^\varphi_t\right] = \frac{\langle f\varphi, X^\varphi_t\rangle}{\langle\varphi, X^\varphi_t\rangle}, \qquad t\geq 0.
\label{empirical}
\end{equation}
This says that knowing $X^\varphi_t$ only allows one to construct the law of $(R^\varphi_t, \Upsilon^\varphi_t)$ through an empirical distribution using $\varphi$. Hence, for $g\in L^+_\infty(D\times V)$ which is bounded by unity and $\mu\in \mathcal{M}(D\times V)$,
\begin{align*}
\tilde{\mathbb{E}}^\varphi_\mu\left[{\rm e}^{\langle\log g , X^\varphi_{t+s}\rangle}|\mathcal{F}_t\right]&=
\tilde{\mathbb{E}}^\varphi_\mu\left[ \sum_{i = 1}^{N_t} 
\frac{\varphi(R^i_t, \Upsilon^i_t)}{\langle \varphi, X^\varphi_t\rangle} \tilde{\mathbb{E}}^\varphi_{\mu', (r,\upsilon)} \left[  {\rm e}^{\langle\log g , X^\varphi_{s}\rangle}\right]_{\mu' = X^\varphi_t, (r,\upsilon) (R^i_t, \Upsilon^i_t)}\right]\\
&=\tilde{\mathbb{E}}^\varphi_\mu\left[ \tilde{\mathbb{E}}^\varphi_{\mu'}\left[{\rm e}^{\langle\log g , X^\varphi_{s}\rangle} \right]_{\mu' = X^\varphi_t}\right],
\end{align*}
where we have written $X^\varphi_t =\textstyle{ \sum_{i =1}^{N_t}} \delta_{(R^i_t, \Upsilon^i_t)}$,
and thus the Markov property of $X^\varphi, \tilde{\mathbb{P}}^\varphi$ follows.
\smallskip

We are thus left with proving \eqref{empirical} to complete this step. To do so we note that it suffices to show that for $f,g\in L^+_\infty (D\times V)$ bounded by unity, $\mu \in \mathcal{M}(D\times V)$ and $(r,\upsilon)\in D\times V$,

\begin{equation}
\tilde{\mathbb{E}}^\varphi_{\mu}\left[f(R^\varphi_t, \Upsilon^\varphi_t){\rm e}^{\langle\log g, X^\varphi_t\rangle}\right] = \tilde{\mathbb{E}}^\varphi_{\mu}\left[\frac{\langle f\varphi, X^\varphi_t\rangle}{\langle\varphi, X^\varphi_t\rangle}{\rm e}^{\langle\log g, X^\varphi_t\rangle}\right] , \qquad t\geq 0.
\label{checkthis}
\end{equation}

On the left-hand side  of \eqref{checkthis}, we have 
\begin{align*}
&\tilde{\mathbb{E}}^\varphi_{\mu}\left[f(R^\varphi_t, \Upsilon^\varphi_t){\rm e}^{\langle\log g, X^\varphi_t\rangle}\right] \\
&=\tilde{\mathbb{E}}^\varphi_{\mu}\left[\tilde{\mathbb{E}}^\varphi_{\mu}\left[f(R^\varphi_t, \Upsilon^\varphi_t){\rm e}^{\langle\log g, X^\varphi_t\rangle}|R^\varphi_t, \Upsilon^\varphi_t\right] \right] \\
&=\sum_{k = 1}^n \frac{\varphi(r_k, \upsilon_k)}{\langle\varphi, \mu\rangle}
\tilde{\mathbb{E}}^\varphi_{\delta_{(r_k,\upsilon_k)}}\left[f(R^\varphi_t, \Upsilon^\varphi_t)
\prod_{i \ge 1 : T_i\leq t}
\prod_{ j = 1}^{N^i} \frac{u_{t-T_i}[g](R^\varphi_{T_i}, \upsilon^i_j)}{u_{t-T_i}[g](R^\varphi_{T_i}, \Upsilon^\varphi_{T_i})}\right]
\end{align*}
where $\mu =\textstyle{ \sum_{k =1}^{n} } \delta_{(r_i, \upsilon_i)}$, $T_i$, $i\geq 1$ are the times of fission along the spine at which point $N^i$ particles are issued at $\upsilon^i_j$, $j = 1\cdots, N^i_j$.  On the right-hand side of  of \eqref{checkthis}, we may appeal to Step 1 and Step 2 to deduce that 
\begin{align*}
\tilde{\mathbb{E}}^\varphi_{\mu}\left[\frac{\langle f\varphi, X^\varphi_t\rangle}{\langle\varphi, X^\varphi_t\rangle}{\rm e}^{\langle\log g, X^\varphi_t\rangle}\right]  &= 
{\rm e}^{-\lambda_* t}\mathbb{E}_{\mu}\left[\frac{\langle f\varphi, X_t\rangle}{\langle\varphi, \mu\rangle}{\rm e}^{\langle\log g, X_t\rangle}\right] \\
& = 
\sum_{i = 1}^n \frac{\varphi(r_i,\upsilon_i)}{\langle\varphi, \mu\rangle}{\rm e}^{-\lambda_* t}\mathbb{E}_{\delta_{(r_i,\upsilon_i)}}\left[\frac{\langle f\varphi, X_t\rangle}{\varphi(r_i,\upsilon_i)}{\rm e}^{\langle\log g, X_t\rangle}\right].
\end{align*}
The proof of this final step is thus complete as soon as we can show that 
\begin{align}
&\tilde{\mathbb{E}}^\varphi_{\delta_{(r_k,\upsilon_k)}}\left[f(R^\varphi_t, \Upsilon^\varphi_t)
\prod_{1\leq i : T_i\leq t}
\prod_{ j = 1}^{N^i} \frac{u_{t-T_i}[g](R^\varphi_{T_i}, \upsilon^i_j)}{u_{t-T_i}[g](R^\varphi_{T_i}, \Upsilon^\varphi_{T_i})}\right]={\rm e}^{-\lambda_* t}\mathbb{E}_{\delta_{(r_i,\upsilon_i)}}\left[\frac{\langle f\varphi, X_t\rangle}{\varphi(r_i,\upsilon_i)}{\rm e}^{\langle\log g, X_t\rangle}\right]
\label{finalbit}
\end{align}
To this end, we note that splitting the expectation on the right-hand of \eqref{finalbit} side at either a scattering or fission event results in a calculation that is almost identical to the one above that concludes with \eqref{spinesg}. More precisely, the expectation on the right-hand side solves \eqref{spinesg} albeit the role of $\hat{\U}_t[g]$ is replaced by $\U_t[fg]$. 
Similarly splitting the expectation on the left-hand side of \eqref{finalbit} also results in a solution to \eqref{spinesg} (with the aforementioned adjustment). The uniqueness of \eqref{spinesg}, with $\hat{\U}$ replaced by $\U$, follows from the same arguments and hence the equality in \eqref{finalbit} now follows, as required.  \hfill $\square$

\section{Proof of Lemma \ref{spinemarkov}}\label{spineproofsection}
The fact that the spine is Markovian is immediate from its definition of $(R^\varphi, \Upsilon^\varphi)$. Indeed, once given its initial configuration, it evolves as  the NRW associated to the rate  $\varphi^{-1}(r,\upsilon)\sigma_{\texttt s}(r,\upsilon)\varphi(r,\upsilon')\pi_{\texttt s}(r,\upsilon,\upsilon')$. Moreover, when in configuration $(r,\upsilon)\in D\times V$,  at rate $\varphi(r,\upsilon)^{-1}( \bF +\sigma_{\texttt f}{\texttt I})\varphi(r,\upsilon)$, it experiences an additionally scattering with new velocity $\upsilon'$, with distribution 
\begin{align*}
{\mathcal E}_{(r,\upsilon)}\left[\frac{\langle\varphi, \mathcal{Z}\rangle}{{\mathcal E}_{(r,\upsilon)}[\langle\varphi, \mathcal{Z}\rangle]}\frac{ \langle \varphi\mathbf{1}_{(\cdot\in \d \upsilon')}, \mathcal{Z}\rangle}{\langle\varphi, \mathcal{Z}\rangle}\right]
=\frac{\sigma_{\texttt f}(r,\upsilon)}{( \bF +\sigma_{\texttt f}{\texttt I})\varphi(r,\upsilon)}\varphi(r,\upsilon')\pi_{\texttt f}(r,\upsilon,\upsilon')\d\upsilon',
\end{align*}
for $\upsilon'\in V$,  where we have used \eqref{<Z>}. The total scatter rate is thus 
\begin{align}
&\sigma_{\texttt s}(r,\upsilon)\frac{\varphi(r,\upsilon')}{\varphi(r,\upsilon)}\pi_{\texttt s}(r,\upsilon,\upsilon') + 
\sigma_{\texttt f}(r,\upsilon)\frac{\varphi(r,\upsilon')}{\varphi(r,\upsilon)}\pi_{\texttt f}(r,\upsilon,\upsilon')\d\upsilon'\notag \\
&= \alpha(r,\upsilon)\frac{\varphi(r,\upsilon')}{\varphi(r,\upsilon)}\pi(r,\upsilon, \upsilon')\\
&=\alpha^\varphi(r,\upsilon)\pi^\varphi(r,\upsilon, \upsilon')\notag
\label{phipi}
\end{align}
as required.

\smallskip

For the second statement, write 
\begin{equation}
\psi^\varphi_t[g](r,\upsilon): =\mathbf{E}_{(r,\upsilon)}\left[
{\rm e}^{-\lambda_* t +\int_0^t \beta(R_s, \Upsilon_s)\d s}\frac{\varphi(R_t, \Upsilon_t)}{\varphi (r,\upsilon)}g(R_t,\Upsilon_t)\mathbf{1}_{\{t<\tau^D\}}
\right].
\label{psiphi}
\end{equation}

 By conditioning the expectation on the right-hand side on the first scattering event  we have, for $t\geq 0$, $r\in D$ and $\upsilon\in V$, 
 \begin{align}
 \psi^\varphi_t[g](r,\upsilon) &=
 {\rm e}^{-\lambda_* t +\int_0^t \beta(r+\upsilon\ell, \upsilon) -\alpha(r+\upsilon\ell, \upsilon)\d \ell}
 \frac{\varphi(r+\upsilon t, \upsilon)}{\varphi (r,\upsilon)}
 g(r+\upsilon t, \upsilon)\mathbf{1}_{(t<\kappa^D_{r,\upsilon})}\notag\\
 &+\int_0^{t\wedge \kappa^D_{r,\upsilon}}  \alpha(r+\upsilon s, \upsilon){\rm e}^{-\lambda_* s +\int_0^s \beta(r+\upsilon\ell, \upsilon) -\alpha(r+\upsilon\ell, \upsilon)\d \ell}\frac{\varphi(r+\upsilon s, \upsilon)}{\varphi (r,\upsilon)}\notag\\
 &\hspace{2cm}
 \int_V  \psi^\varphi_{t-s}[g](r + \upsilon s,\upsilon') \frac{\varphi(r+\upsilon s, \upsilon')}{\varphi (r+\upsilon s,\upsilon)} \pi(r + \upsilon s,\upsilon,\upsilon')\d \upsilon'\d s
 \label{mess}
 \end{align}
   Now appealing to \eqref{obs1}, then using the   standard trick of replacing the role of an additive potential by the role of a multiplicative potential in such semigroup evolutions, see e.g. Lemma 1.2, Chapter 4 in \cite{Dynkin2},  and noting \eqref{psiphi} we get
  \begin{equation}
  \psi^\varphi_t[g](r,\upsilon) = {\texttt U}_t[g](r,\upsilon)+ \int_0^{t} {\texttt U}_s [ (\bL_\varphi +\alpha^\varphi {\texttt I})\psi^\varphi_{t-s}[g]-(\varphi^{-1}(\bS+\bF)\varphi  +(\beta -\alpha){\texttt I})\psi^\varphi_{t-s}[g]](r,\upsilon) \d s
  \label{Lphieq}
 \end{equation}
 where  
  \begin{equation}
  \bL_\varphi f(r,\upsilon) = \alpha^\varphi(r,\upsilon)\int_V [f(r,\upsilon')- f(r,\upsilon)] \pi^\varphi(r,\upsilon,\upsilon')\d\upsilon',
  \label{Lphidef}
 \end{equation}
 for $f\in L^+_\infty(D\times V)$. Referring to \eqref{S}, \eqref{F} and \eqref{betadef}, we note that
 \begin{align*}
 &\alpha^\varphi(r,\upsilon)-\frac{(\bS+\bF)\varphi(r,\upsilon)}{\varphi(r,\upsilon)}  -\beta(r,\upsilon) +\alpha(r,\upsilon)\\
 &=\int_V \alpha(r,\upsilon)\frac{\varphi(r,\upsilon')}{\varphi(r,\upsilon)}\pi(r,\upsilon, \upsilon')\d \upsilon'\\
 & \hspace{1cm}-
 \int_V\frac{\varphi(r,\upsilon')}{\varphi(r,\upsilon)}(\sigma_{\texttt{s}}(r,\upsilon)\pi_{\texttt{s}}(r,\upsilon, \upsilon')+ \sigma_{\texttt{f}}(r,\upsilon)\pi_{\texttt{f}}(r,\upsilon, \upsilon'))\d \upsilon'-\sigma_{\texttt{s}}(r,\upsilon)-\sigma_{\texttt{f}}(r,\upsilon)\\
 &\hspace{1cm}-\sigma_{\texttt{f}}(r,\upsilon)\left(\int_V\pi_{\texttt{f}}(r, \upsilon,\upsilon')\d\upsilon'-1\right)
 +\sigma_{\texttt{s}}(r,\upsilon)+ \sigma_{\texttt{f}}(r,\upsilon)\int_V\pi_{\texttt{f}}(r,\upsilon, \upsilon')\d\upsilon'\\
 &=0.
 \end{align*}
Hence \eqref{Lphieq} reduces to the somewhat simpler recurrence equation
\[
\psi^\varphi_t[g](r,\upsilon) = {\texttt U}_t[g](r,\upsilon)+ \int_0^{t} {\texttt U}_s [ \bL_\varphi \psi^\varphi_{t-s}[g]](r,\upsilon) \d s, \qquad t\geq 0,
\]
where we recall that $\bL_\varphi$ was defined in \eqref{Lphidef}.
This is nothing more than the mild equation for the semigroup evolution $\mathbf{E}^\varphi_{(r,\upsilon)}[g(R_t,\Upsilon_t)]$, $t\geq 0$, which has a unique bounded solution from  the usual Gr\"onwall arguments. Note that when $g = 1$, we see the solution is 1. This, together with the Markov property implies that the right-hand side of \eqref{NRWCOM} is a martingale. Moreover, it follows that the martingale change of measure in \eqref{NRWCOM} describes  law of the $\alpha^\varphi\pi^\varphi$-NRW.

\smallskip 
The fact that $\psi^\varphi_t[1](r,\upsilon) = 1$ for all $r\in D, \upsilon\in V$, implies that $((R,\Upsilon), \mathbf{P}^\varphi)$ is conservative.  Moreover, 
\[
\mathbf{P}^\varphi_{(r,\upsilon)}[g(R_t,\Upsilon_t)]= \psi^\varphi_t[g](r,\upsilon) = {\rm e}^{-\lambda_*t}\frac{\psi_t[g\varphi](r,\upsilon)}{\varphi(r,\upsilon)}, \qquad r\in D, \upsilon\in V.
\]
where $g\in L^+_\infty(D\times V)$; cf. \eqref{psiphi}. Hence $\lim_{t\to\infty}\P^\varphi_{(r,\upsilon)}[g(R_t,\Upsilon_t)] = \langle g,\tilde\varphi\varphi\rangle$ for all $g\in L^+_\infty(D\times V)$.
In other words, $\tilde\varphi\varphi$ is the density of the stationary distribution of $(R,\Upsilon)$ under $\P^\varphi$.
\hfill$\square$

\section{Proof of Theorem \ref{Kesten}}\label{kestensection}
The proof we offer here is a variant of a  standard one, which has been used to analyse the convergence of many analogous martingales in the setting of different spatial branching processes. We mention \cite{Lyons}, \cite{Shi}, \cite{Bertbook2} and \cite{EngK} to name but a few of the contexts  with similar results. 

\smallskip

{\color{black}In the case that $\lambda_*<0$ and $\lambda_*>0$, we need (H3) to ensure that the NBP can  undergo fission. In the setting $\lambda_*=0$ we need the stricter condition (H3)$^*$ for technical reasons in the proof to ensure a minimal rate of reproduction.
}
\smallskip

A standard measure theoretic result (cf. p. 242 of \cite{Durrett}) tells us that the martingale change of measure in \eqref{mgCOM} is uniformly integrable if and only if 
\[
\mathbb{P}^\varphi_{\delta_{(r,\upsilon)}}\left(\limsup_{t\to\infty} W_t <\infty\right)=1.
\]
In the case that $\limsup_{t \to\infty}W_t = \infty$, $\mathbb{P}^\varphi$ almost surely, we have $\mathbb{P}_{\delta_{(r,\upsilon)}}(W_\infty = 0)=1$.

\smallskip

(i) Let us first deal with the case that $\lambda_*>0$. To this end, let us define the sigma algebra $\mathcal{S} = \sigma(T_i, \mathcal{Z}_i: i\geq 0)$, where $T_i$, $i\geq 1$, are the times at which the spine undergoes fission and $\mathcal{Z}_i$, $i\geq 1$, are point processes on $V$ that describe the velocities of fission offspring (i.e. whose law is  given by  the family \eqref{PP} under the change of measure \eqref{pointprocessCOM}). For convenience we will write $T_0 = 0$.

\smallskip

Appealing to the pathwise spine decomposition in Theorem \ref{pathspine}, we can write 
\begin{align}
\mathbb{E}^\varphi_{\delta_{(r,\upsilon)}}[W_t] &\leq  \mathbb{E}^\varphi_{\delta_{(r, \upsilon)}}\left[{\rm e}^{-\lambda_* t}\frac{\varphi(R_t, \Upsilon_t)}{\varphi(r, \upsilon)} \right]\notag\\
&\qquad +\mathbb{E}^\varphi_{\delta_{(r,\upsilon)}}\left[\mathbb{E}^\varphi_{\delta_{(r,\upsilon)}}\left[\left.
\sum_{j =1}^\infty{\rm e}^{-\lambda_* T_j}  \mathbf{1}_{(T_j\leq t)}
\sum_{i = 1}^{N^j} \varphi(R_{T_{j}},  \upsilon_i)W^{j}_{t-T_{j}} (R_{T_j}, \upsilon_i)\right|\mathcal{S}
\right]\right]\notag\\
&=\mathbb{E}^\varphi_{\delta_{(r,\upsilon)}}\left[{\rm e}^{-\lambda_* t}\frac{\varphi(R_t, \Upsilon_t)}{\varphi(r, \upsilon)} + \sum_{j =1}^\infty{\rm e}^{-\lambda_* T_j}  \mathbf{1}_{(T_j\leq t)}
{\mathcal E}^\varphi_{(R_{T_j}, \Upsilon_{T_{j-1}})}[\langle\varphi, \mathcal{Z}_j\rangle]\right]\notag \\
&=\mathbb{E}^\varphi_{\delta_{(r,\upsilon)}}\left[{\rm e}^{-\lambda_* t}\frac{\varphi(R_t, \Upsilon_t)}{\varphi(r, \upsilon)} + \sum_{j =1}^\infty{\rm e}^{-\lambda_* T_j}  \mathbf{1}_{(T_j\leq t)}
\frac{
{\mathcal E}_{(R_{T_j}, \Upsilon_{T_{j-1}})}[\langle\varphi, \mathcal{Z}\rangle^2]
}{
{\mathcal E}_{(R_{T_j}, \Upsilon_{T_{j-1}})}[\langle\varphi, \mathcal{Z}\rangle]
}
\right] \label{rhsfinite}
\end{align}
where, for a given $(r,\upsilon)\in D\times V$, the process $W^{j}_{s} (r,\upsilon)$ is an independent copy of $(W_{s}, s\geq 0)$,  under $\mathbb{P}_{\delta_{(r,\upsilon)}}$ (and consequently has unit mean, which is also used above). 
Our objective is to prove that the sum on the right-hand side of \eqref{rhsfinite} is $\mathbb{P}^\varphi_{\delta_{(r, \upsilon)}}$-almost surely finite. In that case, it will follow with the help of Fatou's Lemma that 
\begin{equation}
\infty > \limsup_{t\to\infty}\mathbb{E}^\varphi_{\delta_{(r,\upsilon)}}[W_t] \geq 
\mathbb{E}^\varphi_{\delta_{(r,\upsilon)}}\left[\liminf_{t\to\infty}W_t\right].
\label{EIS}
\end{equation}
Recalling that $W$ is a non-negative $\mathbb{P}$-martingale, it holds that $1/W$ is a non-negative $\mathbb{P}^\varphi$-supermartingale and thus its limit exists. The conditional expectation in \eqref{EIS} ensures that $\textstyle{\liminf_{t\to\infty}W_t}$ (and hence from the immediately preceding remarks $\textstyle{\limsup_{t\to\infty}W_t}$) is $\mathbb{P}^\varphi_{\delta_{(r,\upsilon)}}$-almost surely finite. 

\smallskip
We must thus show that the upper bound on the right-hand side of \eqref{rhsfinite} is $\mathbb{P}^\varphi_{\delta_{(r, \upsilon)}}$-almost surely finite. To do so, we again recall the description of the pathwise spine decomposition in Theorem \ref{pathspine} and note that fission along the spine occurs at the accelerated rate $\varphi^{-1}( \bF +\sigma_{\texttt f}{\texttt I})\varphi$. Hence  (recalling the generic point process $\mathcal Z$ defined in \eqref{PP})
\begin{align}
&\mathbb{E}^\varphi_{\delta_{(r,\upsilon)}}\left[\sum_{j =1}^\infty{\rm e}^{-\lambda_* T_j}  \mathbf{1}_{\{T_j\leq t\}}
\frac{
{\mathcal E}_{(R_{T_j}, \Upsilon_{T_{j-1}})}[\langle\varphi, \mathcal{Z}\rangle^2]
}{
{\mathcal E}_{(R_{T_j}, \Upsilon_{T_{j-1}})}[\langle\varphi, \mathcal{Z}\rangle]
}
\right]\notag\\ 
&\le (\norm{\varphi}_\infty n_{\texttt{max}})^2\mathbb{E}^\varphi_{\delta_{(r,\upsilon)}}\left[\sum_{j =1}^\infty{\rm e}^{-\lambda_* T_j} \frac{1}{
{\mathcal E}_{(R_{T_j}, \Upsilon_{T_{j-1}})}[\langle\varphi, \mathcal{Z}\rangle]
}\right]\notag\\
& =
(\norm{\varphi}_\infty n_{\texttt{max}})^2\mathbf{E}^\varphi_{(r,\upsilon)}\left[\int_0^\infty {\rm e}^{-\lambda_* t}
\frac{\sigma_{\texttt f}(R_t, \Upsilon_t)}{
{\mathcal E}_{(R_{t}, \Upsilon_{t})}[\langle\varphi, \mathcal{Z}\rangle] 
}
\int_V \frac{\varphi(R_t, \upsilon')}{\varphi(R_t, \Upsilon_t)} \pi_{\texttt f}(R_t, \Upsilon_t, \upsilon') \d \upsilon' \d t\right]\notag\\
&\leq 
\bar\sigma_{\texttt{f}}(\norm{\varphi}_\infty n_{\texttt{max}})^2\mathbf{E}^\varphi_{(r,\upsilon)}\left[\int_0^\infty 
 \frac{{\rm e}^{-\lambda_* t}}{\varphi(R_t, \Upsilon_t)} \d t\right]\notag\\
 &\leq
\bar\sigma_{\texttt{f}}\frac{(\norm{\varphi}_\infty n_{\texttt{max}})^2}{\varphi (r,\upsilon)} \mathbf{E}_{(r,\upsilon)}\left[\int_0^\infty 
{\rm e}^{-2\lambda_* t +\int_0^t \beta(R_s, \Upsilon_s)\d s}\mathbf{1}_{\{t<\tau^D\}} \d t\right]\notag\\
&\leq
\bar\sigma_{\texttt{f}}(\norm{\varphi}_\infty n_{\texttt{max}})^2 \int_0^\infty 
{\rm e}^{-2\lambda_* t } \frac{\psi_t[1](r, \upsilon)}{\varphi (r,\upsilon)} \d t,
\label{useinnextpaper}
\end{align}
 where we have used  (H4) in the first inequality, features of the spine decomposition for the first equality, \eqref{<Z>} and (H1) in the second inequality,  the change of measure \eqref{NRWCOM} in the third inequality and the semigroup \eqref{phi} for the final line. Finally, note that since $\varphi$ is uniformly bounded above, the contribution from the spine term in~\eqref{rhsfinite} is zero in the limit $t \to \infty$.  Now using 
Theorem \ref{CVtheorem}, it follows that
\[
\limsup_{t\to\infty}\mathbb{E}^\varphi_{\delta_{(r,\upsilon)}}\left[
W_t
\right]<\infty
\]
as required.
\smallskip

(ii) Next, for the case $\lambda_*< 0$, it is easy to see that, on the event $\{T_{j}\leq t< T_{j+1}\}$,
\[
W_t \geq {\rm e}^{-\lambda_* t}\varphi(R_{t},\Upsilon_{T_{j-1}} )
\]
which ensures that $\mathbb{P}^\varphi_{\delta_{(r,\upsilon)}} (\textstyle{\limsup_{t\to\infty}}W_t = \infty) =1 $ for all $(r,\upsilon)\in D\times V$ and hence $\mathbb{P}_{\delta_{(r,\upsilon)}} (W_\infty = 0) =1$.

\bigskip

(iii) Finally, for the case $\lambda_* = 0$, our aim is to  show that, for each $r\in D,\upsilon\in V$,
\[
\mathbb{P}^\varphi_{\delta_{(r, \upsilon)}}(\limsup_{t \to \infty}W_t = \infty)=1.
\]
We do this by constructing a random sequence of times $(s_n: n\geq 0)$ such that $\textstyle{\limsup_{n\to\infty}W_{s_n}= \infty}$ almost surely with respect to $\mathbb{P}^\varphi$. 

\smallskip

Lemma \ref{spinemarkov} tells us that $\tilde\varphi\varphi$ is the density of the stationary distribution of $(R,\Upsilon)$ under $\mathbf{P}^\varphi$.
Moreover, thanks to  Theorem \ref{CVtheorem}, the density $\tilde\varphi\varphi$ is a.e. uniformly bounded away from $0$ on each $\Omega\subset\subset D\times V$. It follows that $\langle\mathbf{1}_{\Omega},\varphi\tilde\varphi\rangle >0$ for all $\Omega\subseteq D_\varepsilon\times V$ and that the spine $R$ visits $\Omega$ infinitely often under $\mathbf{P}^\varphi$.

\smallskip

Fix $k \in \mathbb{N}$. We want to show that there is an $\Omega\subseteq D_\varepsilon\times V$ such that  
\begin{equation}
\inf_{(r',\upsilon')\in \Omega}   \mathbb{P}_{\delta_{(r', \upsilon')}}^\varphi(X_\iota(D_\varepsilon\times V) \ge k) > 0,
\label{inf}
\end{equation}
where $\iota =2{\rm diam}(D)/{\texttt v}_{\texttt{min}}$ (note that $\iota$ is twice the time it would take a neutron to cross the equivalent of the diameter of $D$, when moving at minimal speed).
To this end, fix $r\in D,\upsilon\in V$ and choose $\varepsilon>0$ sufficiently small such that both $r\in D_\varepsilon \coloneqq \textstyle{\{r \in D : \inf_{y\in \partial D}|r - y| > \varepsilon{\texttt v}_{\texttt{max}}\} }$ and $B$ {\color{black}(introduced in the assumption (H3)$^*$)} is in $\textstyle{D_{\varepsilon}}$. Then, define 
\[
\Omega = \left\{(r,\upsilon)\in D_\varepsilon \times V: \{r+ \upsilon s: s\geq 0\}\cap B \neq \emptyset \right\}.
\]

Write $m$ for the smallest natural number such that $m (n_{\texttt{max}}-1)+1 \geq k$. Recalling from Theorem~\ref{CVtheorem} that $\textstyle{\inf_{r \in D, \upsilon, \upsilon' \in V}\alpha(r, \upsilon)\pi(r, \upsilon, \upsilon') > 0}$, and taking account of the positivity properties of $\varphi$, we can lower bound the probability that, from any $(r,\upsilon)\in \Omega$, the spine can enter $B$. Moreover, on this event, {\color{black}due to (H3)$^*$}, we can also lower bound the probability that the spine immigrates $n_{\texttt{max}}-1$ particles on $m$  (evenly spaced in time) separate occasions, all of which are still inside of $B$ by time $\iota$. The strategy for doing so is to head into $B$ from the given point of issue in $\Omega$ by travelling in a straight line within a small cone of possible velocities (which would be guaranteed to happen within $\iota/2$ units of time), and then for neutrons to cycle around the perimeter of $B$ in an annulus by scattering within a narrow cone of velocities each time; see Fig \ref{Omega}. As such we can provide the lower bound desired in \eqref{inf}. The technical details are tedious and left to the reader. 

\begin{figure}[h!]
\includegraphics[width= 0.5\textwidth]{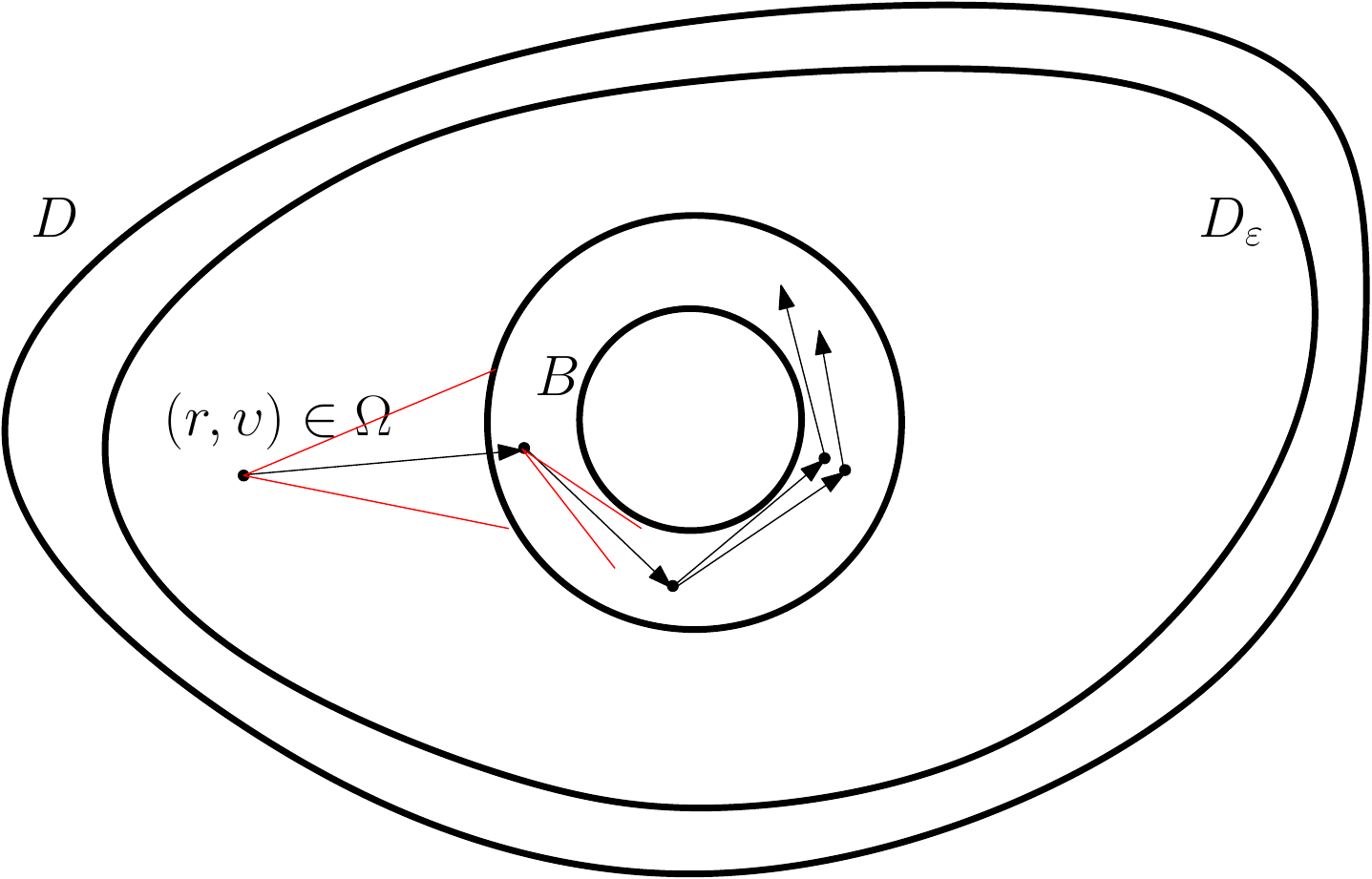}
\caption{\rm There is a uniform lower bound on the probability that the spine issued from $(r,\upsilon)\in \Omega$  heads directly into the annulus contained in $B$  and subsequently immigrates $n_{\texttt{max}}-1$ neutrons on each of $m$ separate occasions,  which then cycle around the annulus, and all this is completed over the time horizon $\iota  ={2\rm diam}(D)/{\texttt v}_{\texttt{min}}$ elapses.}
\label{Omega}
\end{figure}

With \eqref{inf} in hand, we can construct the sequence $(t_n: n\geq 0)$ by defining $t_0 =0$ and subsequently, for $n\geq 1$,
\[
t_n = \inf\{s>t_{n-1}+(10m\times\iota) : (R_s,\Upsilon_s) \in\Omega \}.
\]
Note that since $(R,\Upsilon)$ visits $\Omega$ infinitely often under $\mathbf{P}^\varphi$ we have that $t_n<\infty$,
$\mathbf{P}^\varphi$-almost surely for $n\geq 0$, and $t_n\to\infty$, $\mathbf{P}^\varphi$-almost surely. By applying the strong Markov property at the sequence of times $(t_n,n\geq 0)$, it now follows from \eqref{inf} that, in the spirit of a sequence of independent Bernoulli trials, $\textstyle{\limsup_{n\to\infty}X_{s_n}(D_\varepsilon) \geq  k}$ almost surely with respect to $\mathbb{P}^\varphi$, where $s_n = t_n + (m\times \iota)$. Since the integer $k$ can be chosen arbitrarily large, we also have that $\textstyle{\limsup_{n\to\infty}X_{s_n}(D_\varepsilon) =\infty}$ almost surely with respect to $\mathbb{P}^\varphi$.
 
\smallskip

As $\varphi$ is uniformly bounded  below away form 0 on $D_\varepsilon \times V$ (see Theorem \ref{CVtheorem}), it follows that
\[
W_t \ge cX_t(D_\varepsilon\times V),\qquad t\geq 0,
\]
for some constant $c > 0$. The analysis above, thus shows that  $\textstyle{\mathbb{P}^\varphi_{\delta_{(r, \upsilon)}}(\limsup_{t \to \infty}W_t = \infty) > 0}$, as required, for each $r\in D$, $\upsilon\in V$. 
 \hfill $\square$

\section{Proof of Theorem \ref{zeroset}} 
We need to show that in all three cases, the event $\{\zeta<\infty\}$ agrees almost surely with $\{W_\infty = 0\}$. 
To this end, first note that $\{\zeta<\infty\}\subseteq\{W_\infty =0\}$ and hence 
\begin{equation}
\mathbb{P}_{\delta_{(r,\upsilon)}}(\zeta<\infty) \leq \mathbb{P}_{\delta_{(r,\upsilon)}}(W_\infty =0),
\label{=}
\end{equation}
 for all $r\in D$, $\upsilon\in V$. It thus suffices to show that \eqref{=} is in fact an equality.

\smallskip

We will give two preparatory technical lemmas before coming to the main part of the proof of Theorem \ref{zeroset}. 

\begin{lemma}\label{1stlem}
For all $r\in D$ and $\upsilon\in V$, we have $\P_{\delta_{(r,\upsilon)}}$-almost surely that
\[
  \lim_{n\to\infty}\, \P_{X_n}(\zeta\leq t_0)=\mathbf{1}_{\{\zeta<\infty\}}.
\]

\end{lemma}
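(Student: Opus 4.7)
The plan is to combine the Markov branching property of $(X,\mathbb{P})$ with L\'evy's upward martingale convergence theorem. The key observation is that $\mathbb{P}_{X_n}(\zeta\leq t_0)$ is a random quantity depending measurably on $X_n$, and we wish to show that it converges pointwise almost surely to $\mathbf{1}_{\{\zeta<\infty\}}$.

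First, I would apply the Markov branching property at the deterministic time $n\in\mathbb{N}$ to identify
\[
\mathbb{P}_{\delta_{(r,\upsilon)}}(\zeta<\infty\mid \mathcal{F}_n) = \mathbb{P}_{X_n}(\zeta<\infty) \qquad \mathbb{P}_{\delta_{(r,\upsilon)}}\text{-a.s.},
\]
adopting the natural convention that when $X_n$ equals the zero atomic measure, $\mathbb{P}_{X_n}(\zeta<\infty)=1$, since in that case no particle is ever produced so that $\langle 1, X_t\rangle=0$ for all $t>0$. Because $\{\zeta<\infty\}\in \sigma\bigl(\bigcup_n \mathcal{F}_n\bigr)$, L\'evy's upward martingale convergence theorem applied to $\mathbf{1}_{\{\zeta<\infty\}}$ then yields
\[
\mathbb{P}_{X_n}(\zeta<\infty) \longrightarrow \mathbf{1}_{\{\zeta<\infty\}}, \qquad \mathbb{P}_{\delta_{(r,\upsilon)}}\text{-a.s.}
\]

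Next, I would deduce the claim by a simple sandwiching argument that transfers this convergence from $\mathbb{P}_{X_n}(\zeta<\infty)$ to $\mathbb{P}_{X_n}(\zeta\leq t_0)$. The trivial inequality $\mathbb{P}_{X_n}(\zeta\leq t_0)\leq \mathbb{P}_{X_n}(\zeta<\infty)$ shows that on the event $\{\zeta=\infty\}$ the left-hand side also tends to $0$. On the complementary event $\{\zeta<\infty\}$, one has $X_n=0$ for every integer $n\geq \zeta$, and hence $\mathbb{P}_{X_n}(\zeta\leq t_0)=1$ for all such $n$. Putting the two cases together delivers the desired almost sure limit.

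I do not foresee a serious obstacle in this argument: the only point requiring care is the identification of $\mathbb{P}_{X_n}(\zeta<\infty)$ with the conditional expectation $\mathbb{E}_{\delta_{(r,\upsilon)}}[\mathbf{1}_{\{\zeta<\infty\}}\mid \mathcal{F}_n]$, which is immediate from the Markov branching property together with the convention that the zero configuration is trivially extinct. The time horizon $t_0$ plays no real role in the proof beyond the sandwich bound, which is what makes the statement as clean as it is.
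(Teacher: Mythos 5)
Your proof is correct and takes a genuinely different route from the paper. You invoke L\'evy's upward martingale convergence theorem: identifying $\P_{X_n}(\zeta<\infty)$ as the conditional expectation $\E_{\delta_{(r,\upsilon)}}[\mathbf{1}_{\{\zeta<\infty\}}\mid\mathcal F_n]$ (justified by the Markov branching property together with the observation that $\{\zeta<\infty\}=\bigcup_n\{\langle 1,X_n\rangle=0\}\in\mathcal F_\infty$, since extinction is absorbing), which gives $\P_{X_n}(\zeta<\infty)\to\mathbf{1}_{\{\zeta<\infty\}}$ a.s.\ in one stroke, after which the trivial bound $\P_{X_n}(\zeta\leq t_0)\leq\P_{X_n}(\zeta<\infty)$ handles $\{\zeta=\infty\}$ and absorption of the zero configuration handles $\{\zeta<\infty\}$. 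The paper instead avoids L\'evy: it only uses the Markov property to write $\P_{\delta_{(r,\upsilon)}}(\zeta<\infty)=\E_{\delta_{(r,\upsilon)}}[\P_{X_{T_n}}(\zeta<\infty)]$ for stopping times, applies Fatou to show that for every increasing sequence of stopping times the liminf of $\P_{X_{T_n}}(\zeta\leq t_0)$ vanishes on $\{\zeta=\infty\}$, and then needs a further (somewhat delicate) argument passing from ``liminf along any stopping-time sequence is zero'' to ``limsup along deterministic integers is zero.'' Your argument buys a shorter and more transparent proof, bypassing both Fatou and the stopping-time quantification; the only point worth making explicit is the $\mathcal F_\infty$-measurability of $\{\zeta<\infty\}$, which you correctly flag and which holds because extinction is an absorbing state.
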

\begin{proof}
On the event $\{\zeta<\infty\}$, it is immediate that, for all $r\in D$ and $\upsilon\in V$,
\begin{equation}
  \label{eq2}
\lim_{t\to\infty}  \P_{X_t}(\zeta\leq t_0)= 1
\end{equation}
$\textstyle{\P_{\delta_{(r,\upsilon)}}}$-almost surely.
Let $(T_n)_{n\in\N}$ be any increasing sequence of stopping times.
Using the strong Markov property and \eqref{eq2}, we have that, for
all $n\in\N$,
\[
  \P_{\delta_{(r,\upsilon)}}(\zeta<\infty)=\E_{\delta_{(r,\upsilon)}}\left[\P_{X_{T_n}}(\zeta<\infty)\right]\geq \E_{\delta_{(r,\upsilon)}}\left[\P_{X_{T_n}}(\zeta\leq t_0)\right].
\]
Using this inequality and  Fatou's Lemma, we deduce that
\begin{align*}
  \P_{\delta_{(r,\upsilon)}}(\zeta<\infty)&\geq \liminf_{n\to\infty}\,\E_{\delta_{(r,\upsilon)}}\left[\P_{X_{T_n}}(\zeta\leq t_0)\right]\\
  &\geq \E_{\delta_{(r,\upsilon)}}\left[\liminf_{n\to\infty}\,\P_{X_{T_n}}(\zeta\leq t_0)\right]\\
                              &\geq \E_{\delta_{(r,\upsilon)}}[\mathbf{1}_{\{\zeta<\infty\}}]+\delta\P_{\delta_{(r,\upsilon)}}\left(\zeta=\infty\text{ and }\liminf_{n\to\infty}\, \P_{X_{T_n}}(\zeta\leq t_0)\geq \delta\right).
\end{align*}
It follows that, for all $\delta\in(0,1]$, we have
$\textstyle{\P_{\delta_{(r,\upsilon)}}\left(\zeta=\infty\text{ and
    }\liminf_{n\to\infty}\, \P_{X_{T_n}}(\zeta\leq t_0)\geq
    \delta\right) = 0}$. This implies that, on $\{\zeta = \infty\}$,
$ \textstyle{ \liminf_{n\to\infty} \P_{X_{T_n}}(\zeta\leq t_0) = 0.  }$
Since this is true for any sequence of increasing stopping times, we
deduce that, on $\{\zeta = \infty\}$, 
$ \textstyle{ \limsup_{n\to\infty} \P_{X_{n}}(\zeta\leq t_0) = 0.  } $
Together with \eqref{eq2}, this gives us
\[
  \lim_{n\to\infty}\, \P_{X_n}(\zeta\leq t_0)=\mathbf{1}_{\{\zeta<\infty\}}
\]
 $\P_{\delta_{(r,\upsilon)}}$-almost surely, as required.
\end{proof}

\begin{lemma}\label{2ndlem}For all $r\in D$ and $\upsilon\in V$, on $\{\zeta = \infty\}$, we have $\P_{\delta_{(r,\upsilon)}}$-almost surely  that
\begin{equation}
\label{step1}
\lim_{t\to\infty}\langle \varphi, X_t\rangle =\infty.
\end{equation}
\end{lemma}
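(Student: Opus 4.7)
The plan is to derive the lemma by combining the almost sure convergence from Lemma~\ref{1stlem} with a quantitative lower bound on the extinction probability of the NBP over a fixed time window $t_0$, phrased in terms of the initial $\varphi$-mass. More precisely, the key estimate to establish is that, for $t_0$ sufficiently large, there is a constant $C'>0$ such that
\[
 \P_\mu(\zeta \leq t_0) \geq \exp\bigl(-C' \langle \varphi, \mu\rangle\bigr), \qquad \mu\in\mathcal{M}(D\times V).
\]

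Granted this inequality, the lemma follows by a contradiction argument. If, on some event of positive probability contained in $\{\zeta=\infty\}$, one had $\liminf_{t\to\infty}\langle \varphi, X_t\rangle \leq K$ for some finite $K$, the stopping times $\tau_n := \inf\{t>\tau_{n-1}+t_0 : \langle \varphi, X_t\rangle \leq K+1\}$ would be a.s.\ finite with $\tau_n \uparrow \infty$, and the displayed estimate evaluated at $\mu = X_{\tau_n}$ would force $\P_{X_{\tau_n}}(\zeta \leq t_0) \geq e^{-C'(K+1)}$ along the sequence. This contradicts the conclusion of Lemma~\ref{1stlem} (whose proof transfers verbatim to any increasing sequence of stopping times and thus forces $\liminf_n \P_{X_{\tau_n}}(\zeta\leq t_0)=0$ on $\{\zeta=\infty\}$). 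Hence $\liminf_{t\to\infty} \langle \varphi, X_t\rangle = \infty$ almost surely on $\{\zeta=\infty\}$, which gives the lemma since the limiting quantity is non-negative.

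The key single-particle estimate $-\log p(r,\upsilon) \leq C'\varphi(r,\upsilon)$, where $p(r,\upsilon):=\P_{\delta_{(r,\upsilon)}}(\zeta\leq t_0)$, will be obtained by combining two complementary bounds. First, choosing $t_0 \geq \mathrm{diam}(D)/\texttt{v}_{\texttt{min}}$, the neutron issued at $(r,\upsilon)$ reaches $\partial D$ along the ray $r+\upsilon s$ at a deterministic time before $t_0$; upper-bounding $\sigma_{\texttt{s}}+\sigma_{\texttt{f}}$ by some $\overline{\sigma}<\infty$ via (H1), the probability that it does so without any scatter or fission event is at least $p_0 := e^{-\overline{\sigma}t_0}>0$, giving $p(r,\upsilon)\geq p_0$ uniformly. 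Second, Markov's inequality applied to $\langle 1, X_{t_0}\rangle$ together with \eqref{spectralexpsgp} of Theorem~\ref{CVtheorem} yields $1-p(r,\upsilon) \leq \psi_{t_0}[1](r,\upsilon) \leq C\varphi(r,\upsilon)$ for some $C>0$. Splitting into the regions $\{C\varphi \leq 1/2\}$ and $\{C\varphi > 1/2\}$, using $-\log(1-x)\leq 2x$ on the former and the uniform floor $p \geq p_0$ on the latter, one obtains the desired inequality $-\log p\leq C'\varphi$ with $C' = 2C\max(1,\overline{\sigma}t_0)$; the displayed $\varphi$-weighted bound then follows by the branching property $\P_\mu(\zeta\leq t_0)=\prod_i p(r_i,\upsilon_i)$.

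The main obstacle is achieving a bound that is uniform in the initial configuration: Theorem~\ref{CVtheorem} only guarantees a lower bound for $\varphi$ on compactly embedded subsets of $D\times V$, and the boundary condition \eqref{BC} forces $\varphi$ to decay to zero on the outgoing portion of $\partial D$. The uniform floor $p\geq p_0$ alone cannot control configurations with arbitrarily many particles (each contributing a fixed positive amount to $-\log p$), while the upper bound $1-p\leq C\varphi$ alone does not produce a useful estimate for $-\log p$ in regions where $\varphi$ is of order one (since $-\log(1-x)$ fails to be a small multiple of $x$ there). It is the interplay of both bounds, in their complementary regimes, that converts $\log$-probabilities into the $\varphi$-weighted sum of interest and thereby closes the contradiction with Lemma~\ref{1stlem}.
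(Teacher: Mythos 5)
Your proof is correct and uses the same core ingredients as the paper's: the first-moment bound $1-p(r,\upsilon)\le\psi_{t_0}[\mathbf{1}_{D\times V}](r,\upsilon)\le C\varphi(r,\upsilon)$ from \eqref{spectralexpsgp}, a uniform positive floor on the single-particle extinction probability (explicit in your argument via straight-line exit without scatter or fission, implicit in the paper through the constant $c_0<1$), and Lemma~\ref{1stlem}. The packaging differs: the paper multiplies the capped estimate $\P_{\delta_{(r,\upsilon)}}(t_0<\zeta)\le c_0\wedge\big[2e^{\lambda_* t_0}\varphi(r,\upsilon)\big]$ over particles at integer times and passes to logarithms to read off $\langle\varphi,X_n\rangle\to\infty$ on $\{\zeta=\infty\}$, whereas you condense the two complementary regimes into the clean inequality $\P_\mu(\zeta\le t_0)\ge\exp\!\big(-C'\langle\varphi,\mu\rangle\big)$ and close by contradiction along stopping times $\tau_n$ at which $\langle\varphi,X\rangle$ stays below $K+1$, invoking the stopping-time strengthening of Lemma~\ref{1stlem} that the proof of that lemma in fact establishes. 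Your route buys two small things: the stopping-time contradiction yields the continuous-time statement $\lim_{t\to\infty}\langle\varphi,X_t\rangle=\infty$ directly, rather than only the integer-time version the paper's computation gives; and your explicit $-\log p\le C'\varphi$ makes transparent that the inequality actually needed at the logarithm step is $-\log(1-x)\le x/(1-c_0)$ on $[0,c_0]$, not the inequality $\log x\le x-1$ that the paper cites, which points the wrong way. Two details to nail down when writing it up: define $\tau_n$ so it is finite off the exceptional event $A$ (e.g.\ set $\tau_n=\tau_{n-1}+t_0+1$ when the defining set is empty), and note by right-continuity of $t\mapsto\langle\varphi,X_t\rangle$ that $\langle\varphi,X_{\tau_n}\rangle\le K+1$ on $A$.
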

\begin{proof}
Recall that $\zeta$ is the time of extinction of the NBP. For any
$r\in D$, $\upsilon\in V$  and $t\geq 0$, we have
\[
  \P_{\delta_{(r,\upsilon)}}(t<\zeta)\leq \E_{\delta_{(r,\upsilon)} }[N_t]=\psi_t[\mathbf{1}_{\{D\times V\}}](r,\upsilon).
\]
Using \eqref{spectralexpsgp}, we
deduce that there exists a $t_0>0$ such that, for all $r\in D$, $\upsilon\in V$,
\[
 \P_{\delta_{(r,\upsilon)}}(t_0<\zeta)\leq   \psi_{t_0}[\mathbf{1}_{\{D\times V\}}](r,\upsilon)\leq 2e^{\lambda_* t_0}\varphi(r,\upsilon).
\]
It is straightforward to show   that $\textstyle{\sup_{x\in E} \P_x(t_0<\zeta)<1}$. Hence,
there exists a constant $c_0\in(0,1)$ such that, uniformly for all $r\in D$ and $\upsilon\in V$,
\[
  \P_{\delta_{(r,\upsilon)}}(t_0<\zeta)\leq c_0\wedge \Big[2e^{\lambda_* t_0}\varphi(r,\upsilon)\Big].
\]
Using the branching property, we deduce that, for all $\textstyle{\mu=\sum_{i =1}^n \delta_{(r_i, \upsilon_i)}\in {\cal M}(D\times V)}$,
\[
  \P_{\mu}(\zeta\leq t_0)\geq  \prod_{i=1}^n \left(1-c_0\wedge \Big[2e^{\lambda_* t_0}\varphi(r_i, \upsilon_i)\Big]\right).
\]
Now, using the Lemma \ref{1stlem} we have, taking the limit in $n \in\mathbb{N}$,
we obtain
\begin{equation}
 \mathbf{1}_{\{\zeta<\infty\}} =   \limsup_{n\rightarrow \infty} \P_{X_n}(\zeta\leq t_0) \geq  \limsup_{n\rightarrow \infty} \prod_{i=1}^{N_n} \left(1-c_0\wedge
    \Big[2e^{\lambda_0
      t_0}\varphi(r_i(n), \upsilon_i(n))\Big]\right),
      \label{takelogs}
\end{equation}
where we have used the notation from \eqref{randommeasure}.
Since $c_0<1$, taking logarithms in \eqref{takelogs} and using $\log x\leq  x-1$, we deduce that, on 
$\{\zeta=\infty\}$, again taking limits on $\N$,
\[
 \lim_{n\to\infty} \sum_{i=1}^{N_n} \left(c_0\wedge\Big[2e^{\lambda_0 t_0}\varphi(r_i(n), \upsilon_i(n))\Big]\right)= \infty
\]
and hence the statement of the lemma follows. 
\end{proof}

Let us now return to the proof of Theorem \ref{zeroset}. 
First we consider the setting that $\lambda_*\leq 0$. Noting that, up to a normalising constant  $W_t  =
{\rm e}^{-\lambda_* t}\langle\varphi,X_t\rangle\geq \langle\varphi,X_t\rangle$, as $W$ is almost surely convergent,  the conclusion of Lemma \ref{2ndlem} forces us to deduce that $\{\zeta<\infty\}$ almost surely in order to avoid a contradiction. Hence, from \eqref{=}, we have that $\{\zeta<\infty\} =\{W_\infty = 0\}$ and both occur with probability one (irrespective of the starting configuration of $X$).
\smallskip

Next we consider the setting that $\lambda_*>0$.  Due to our assumptions and the boundedness of $\varphi$, we have uniformly, for all $r\in D$, $\upsilon \in D$ and  all times $t$ such that there is a discontinuity in $W$, 
$|W_t-W_{t-}|$ is uniformly bounded by some constant $M>0$,
$\P_{\delta_{(r,\upsilon)}}$ almost surely. 
Defining the stopping time
$T_1=\inf\{t\geq 0,W_t\geq 1\}$, using the fact that $W$ is a non-negative,  $L^1$, and hence uniformly integrable martingale, and using Doob's Optional Stopping Theorem, we deduce that, for all $r\in D$, $\upsilon \in V$,
\begin{align*}
  1&=\E_{\delta_{(r,\upsilon)}}[W_{T_1}]=\E_{\delta_{(r,\upsilon)}}[W_{ T_1}\mathbf{1}_{\{W_\infty>0\}}]\leq \frac{1}{\varphi(r,\upsilon)}\E_{\delta_{(r,\upsilon)}}((1+M)\mathbf{1}_{\{W_\infty>0\}}).
\end{align*}
It follows that, for all $r\in D$ and $\upsilon \in V$, $\P_{\delta_{(r,\upsilon)}}(W_\infty>0)\geq \varphi(r,\upsilon)/M$. Hence that
there exists $c_1>0$ such that
\[
  \P_{\delta_{(r,\upsilon)}}(W_\infty=0)\leq 1-c_1\varphi(r,\upsilon), \qquad r\in D, \upsilon\in V.
\]
Now, using the branching property, we obtain 
 for all $\textstyle{\mu=\sum_{i =1}^n \delta_{(r_i, \upsilon_i)}\in {\cal M}(D\times V)}$,
 \[
  \ln\,\P_{\mu}(W_\infty=0)\leq \sum_{i=1}^n
  \ln\left(1-c_1\varphi(x_i)\right)\leq -c_1\sum_{i=1}^n\varphi(x_i).
\]
Due to the conclusion of Lemma \ref{2ndlem} we have, on $\{\zeta = \infty\}$,
\[
  \limsup_{n\to\infty}\,\ln \P_{X_n}(W_\infty=0)\leq -c_1\lim_{n\rightarrow\infty}\langle\varphi, X_n\rangle= -\infty
\]
With the upper bound of any probability being unity, we can thus write 
\[
  \limsup_{n\to\infty} \P_{X_n}(W_\infty=0)\leq \mathbf{1}_{\{\zeta<\infty\}}.
\]

Markov's property now entails, for $r\in D$, $\upsilon\in V$,
\[
  \P_{\delta_{(r,\upsilon)}}(W_\infty=0)=\E_{\delta_{(r,\upsilon)}}\left[\P_{X_n}(W_\infty=0)\right],\quad\forall n\in\N,
\]
so that, by the Reverse Fatou's Lemma, 
\begin{align*}
  \P_{\delta_{(r,\upsilon)}}(W_\infty=0)&=\limsup_{n\to\infty}\E_{\delta_{(r,\upsilon)}}\left[\P_{X_n}(W_\infty=0)\right]\\
  &\leq \E_{\delta_{(r,\upsilon)}}\left[\limsup_{n\to\infty}\P_{X_n}(W_\infty=0)\right]\\
                           &\leq \E_{\delta_{(r,\upsilon)}}\left[\mathbf{1}_{\{\zeta<\infty\}}\right]\\
                           &=\P_{\delta_{(r,\upsilon)}}(\zeta<\infty).
\end{align*}
Together with \eqref{=}, this completes the proof of the theorem. \hfill$\square$

\section{Proof of Corollary \ref{L2}}
Doob's martingale inequality ensures that, for $\mu\in \mathcal{M}(D\times V)$
\[
\mathbb{E}_\mu[(\sup_{t\geq 0}W_t)^2] \leq \liminf_{s\to\infty} 4\mathbb{E}_\mu[(W_s)^2].
\]
Showing that the right-hand side above is finite is sufficient to obtain $L_2(\mathbb{P})$ convergence.
Note, however, that 
$
\mathbb{E}_\mu[(W_s)^2] = \mathbb{E}^\varphi_{\mu}[W_t]
$, $t\geq 0$, and hence, from \eqref{EIS}, the desired upper bound is proved.
 \hfill $\square$

\section*{Acknowledgements} This research was born out of a surprising connection that was made at the problem formulation ``Integrative Think Tank'' as part of the EPSRC Centre for Doctoral Training SAMBa in the summer of 2015. AEK and EH are indebted to Professor Paul Smith and Dr. Geoff Dobson from the ANSWERS modelling group at Wood for the extensive discussions at their offices in Dorchester as well as for giving  permission to use these images in Figure \ref{reactorcore}, which were constructed with Wood nuclear software ANSWERS.  We would also like to thank Alex Cox, Simon Harris and Minmin Wang for helpful discussions, as well as Jean Bertoin and Alex Watson for several interesting discussions on growth-fragmentation equations. Finally, we are extremely grateful to the assistance of an AE and an anonymous referee, which have helped us shape the presentation significantly.

\bibliography{references}{}
\bibliographystyle{plain}

\begin{table}[h]
\begin{tabular}{l l l}
&\multicolumn{1}{c}{\bf Glossary of some commonly used notation}&\\
&\multicolumn{1}{c}{\bf (Th. = Th., a. = above, b. = below)}&\\
&&\\
\hline
Notation & Description & Introduced\\
\hline
&&\\
$(\psi_t, t\geq 0)$ & Solution to mild NTE/NBP expectation 
semigroup& \eqref{semigroup}, \eqref{mild}\\
$D$ and $V$ & Physical and velocity domain & \S \ref{intro}\\
$\sigma_{\texttt{s}}$, $\sigma_{\texttt{f}}$ and $\sigma$ & Scatter, fission and total cross-sections 
&b. \eqref{bNTE}\\
$\pi_{\texttt s}$ and $\pi_{\texttt f}$ & Scatter and fission kernels & b. \eqref{bNTE}\\
$\bS$ and $\bF$ & Scatter and fission operators & \eqref{S}, \eqref{F}\\
$\mathcal{P}_{(r,\upsilon)}$  & Offspring law  of $X$ when parent at  $(r,\upsilon)\in D\times V$ & \eqref{Erv}\\
$((r_i,\upsilon_i), i = 1,\cdots, N)$& Position and number of offspring positions  of a family in $X$ & a. \eqref{PP}\\
$(X, \mathbb{P}_\mu)$ & NBP when issued from $\mu$ & 
\S \ref{PP}\\
$\U_t$& Linear advection semigroup & b. \eqref{branchingsemigroup}, \eqref{mild}\\
$G_{\texttt{f}}$&Branching generator of $(X, \mathbb{P}_\mu)$&\eqref{Gf}\\
$(u_t, t\geq 0)$ & Non-linear semigroup of $X$& \eqref{ut}\\
$\hat\U_t$& Non-linear advection semigroup & \eqref{ut}, \eqref{tildeU}\\
$n_\texttt{max}$ & Maximum number of neutrons in a fission event& b. \eqref{Erv}, (H4)\\
$\lambda_*$, $\varphi$ and $\tilde\varphi$& Leading eigenvalue, right- and left-eigenfunctions &Th. \ref{CVtheorem}\\
$(W_t, t\geq 0)$& Additive martingale &\eqref{mgdef}\\
$\zeta$& Extinction time &Th. \ref{Kesten}\\

&&\\
\hline
&&\\

$((R,\Upsilon), \mathbf{P})$& Many-to-one NRW  & Lemma \ref{NRWrep}\\
$\alpha$ and $\pi$ &Scatter rate and kernel for many-to-one NRW  & \eqref{alpha}, \eqref{aalone}\\
$\tau^D$ & First exit time of spatial component of $\alpha\pi$-NRW from $D$& Lemma \ref{NRWrep}\\
$((R,\Upsilon),  \mathbf{P}^{\dagger})$& Killed $\alpha\pi$-NRW&
\eqref{boldPdagger}\\
$\texttt{P}^\dagger$& Semigroup of killed $\alpha\pi$-NRW& \eqref{boldPdagger}\\

$\beta$ & Many-to-one potential &\eqref{betadef}, \eqref{phi}\\
$\texttt{k}$&Killing time of $\alpha\pi$-NRW& \eqref{kill}\\
$(J_k, k\geq 1)$ &Ordered jump times of killed $\alpha\pi$-NRW &b. \eqref{minor}\\

&&\\
\hline
&&\\
$(X,\mathbb{P}^\varphi_\mu)$& NBP after  change of measure with $W$ when issued from $\mu$ &\eqref{mgCOM}\\
$G^\varphi_{\texttt{f}}$&Branching generator of $(X,\mathbb{P}^\varphi_\mu)$&\eqref{Gphif}, \eqref{spinesg}\\
$({u}^\varphi_t,t\geq 0)$ &Non-linear semigroup of $(X, {\mathbb{P}}^\varphi_{\mu})$&
\eqref{uphi}\\
$({\psi}^\varphi_t,t\geq 0)$ &Linear semigroup of $(X,\mathbb{P}^\varphi_\mu)$&\eqref{psiphi}\\
$(X^\varphi , \tilde{\mathbb{P}}^\varphi_{\mu})$
& Dressed spine when issued from configuration $\mu$ &\eqref{dressedprocess}\\
$(\tilde{u}^\varphi_t,t\geq 0)$ &Non-linear semigroup of $(X^\varphi , \tilde{\mathbb{P}}^\varphi_{\mu})$&
\eqref{uttild}\\

$((R^\varphi, \Upsilon^\varphi), \tilde{\mathbf{P}}^\varphi )$&Marginal of $\tilde{\mathbb{P}}^\varphi $ giving law of spine NRW & Th. \ref{pathspine}, \eqref{NRWCOM}\\
$\alpha^\varphi$ and $\pi^\varphi$& Scatter rate and kernel of auxiliary NRW&\eqref{spineap}\\
$\mathcal{P}^\varphi_{(r,\upsilon)}$ & Scattering of velocities along the spine& \eqref{pointprocessCOM}\\
$\mathbf{P}^\varphi$&Law of $\alpha^\varphi\pi^\varphi$-NRW that agrees with $ \tilde{\mathbf{P}}^\varphi$ &a. \eqref{spineap}\\

&&\\
\hline
\end{tabular}
\label{table-notation}
\end{table}

\end{document}